\documentclass[a4paper,10pt,reqno]{amsart}
\usepackage{amssymb, amsmath, graphicx, amsthm, enumitem, multicol,amsfonts}
\usepackage[margin=1in]{geometry}
\usepackage{cite}
\usepackage{ bbold }
\usepackage{longtable}
\usepackage{booktabs}
\usepackage{placeins}
\usepackage{hyperref}
\usepackage{tikz-cd}

\usepackage[utf8]{inputenc}
\theoremstyle{definition}

\newtheorem {proposition}   {Proposition}
\newtheorem {theorem}    {Theorem}

\DeclareMathOperator{\C}{\mathbb{C}}

\DeclareMathOperator{\Z}{\mathbb{Z}}
\DeclareMathOperator{\N}{\mathbb{N}}

\DeclareMathOperator{\cH}{\mathcal{H}}
\DeclareMathOperator{\cC}{\mathcal{C}}

\newcommand{\Tr}{\mathrm{Tr}\,}

\newcommand{\ba}{\begin{align}}
\newcommand{\ea}{\end{align}}
\newcommand{\bea}{\begin{eqnarray}}
\newcommand{\eea}{\end{eqnarray}}
\newcommand{\be}{\begin{equation}}
\newcommand{\ee}{\end{equation}}

\numberwithin{equation}{subsection}

\title{Modularity of Vertex Operator Algebra Correlators with Zero Modes}
 \author{Darlayne Addabbo}
 \address{Darlayne Addabbo, Department of Mathematics and Physics, SUNY Polytechnic Institute, Utica, NY 13502} \email{addabbd@sunypoly.edu}
 \author{ Christoph A.~Keller}
 \address{Christoph A. Keller,  Department of Mathematics, University of Arizona, Tucson, AZ 85721-0089, USA}
 \email{cakeller@arizona.edu}

\theoremstyle{plain}
\newtheorem{thm}{Theorem}[section]
\newtheorem{lem}[thm]{Lemma}
\newtheorem{prop}[thm]{Proposition}
\newtheorem{cor}[thm]{Corollary}
\newtheorem*{theorem-non}{Theorem}

\theoremstyle{definition}
\newtheorem{defn}{Definition}[section]

\theoremstyle{remark}

\begin{document}

 \begin{abstract}
 It is known from Zhu's results that under modular transformations, correlators of rational $C_2$-cofinite vertex operator algebras transform like Jacobi forms.
 We investigate the modular transformation properties of VOA correlators that have zero modes inserted. We derive recursion relations for such correlators and use them to establish modular transformation properties. We find that correlators with only zero modes transform like quasi-modular forms, and mixed correlators with both zero modes and vertex operators transform like quasi-Jacobi forms. As an application of our results, we introduce algebras of higher weight fields whose zero mode correlators mimic the properties of those of weight 1 fields. We also give a simplified proof of the weight 1 transformation properties originally proven by Miyamoto.
\end{abstract}

\maketitle

\section{Introduction}
Given a vertex operator algebra $V$ of central charge $c$ \cite{FLM}, \cite{MR1142494}, \cite{LL}, one can define correlation functions on the torus using traces of the form
\be\label{introcorr}
\Tr_M Y(\zeta_1^{L_0}a^1,\zeta_1) \ldots Y(\zeta_n^{L_0} a^n,\zeta_n) q^{L_0-c/24}\ , \qquad \zeta_i = e^{2\pi i z_i}\ ,
\ee
where $M$ is some module of the VOA and the $Y(\zeta^{L_0} a,\zeta)$ are vertex operators inserted at position $\zeta$. Zhu  \cite{Z} famously established that if $V$ is rational and $C_2$-cofinite, then these correlation functions transform nicely under modular transformations. To be more precise, they are weakly holomorphic vector valued Jacobi forms \cite{MR781735} of weight $k$ and index 0, where $k$ is given by the sum of the $L[0]$ weights of the states $a^i$. This means that they satisfy two properties: First, they are elliptic functions in the arguments $z_i$, that is they are invariant under the transformations $z_i \mapsto z_i + \lambda_i\tau + \mu_i$ for integer $\lambda_i,\mu_i$. This simply means that they are well-defined functions for the coordinates $z_i$ on the torus. Second, under modular transformations, $\gamma\tau = \frac{a\tau+b}{c\tau+d}$ and $\gamma z_i=\frac{z_i}{c\tau+d}$ where $\gamma=\begin{pmatrix}a&b\\c&d\end{pmatrix}\in SL(2,\Z)$, they pick up the usual factor $(c\tau+d)^k$. In particular, if $n=0$, that is for zero point correlators, we end up with (vector valued) modular forms.

Equation (\ref{introcorr}) suggests an immediate generalization: we can insert any number of zero modes in the trace to obtain a function
\be\label{intromixcorr}
\Tr_M b^1_0 \cdots b^m_0 Y(\zeta_1^{L_0}a^1,\zeta_1) \ldots Y(\zeta_n^{L_0} a^n,\zeta_n) q^{L_0-c/24}\ .
\ee
Here we use the physics notation $b_0=o(b)$ for the zero mode of the state $b$.
We will call this a \emph{mixed correlator}. If $n=0$ we will call it a \emph{zero mode correlator}, and if $m=0$ a \emph{full correlator}. We expect these correlators to also have good modular transformation properties.
The goal of this article is to investigate what exactly those properties are. 

For notational simplicity, we will focus on holomorphic VOAs, so that $M=V$, and assume that the central charges are multiples of 24. We will often suppress writing $M$ in denoting our correlators, as the trace is assumed to be taken over $V$ unless otherwise indicated. We believe that our results can be generalized in a straightforward way to rational $C_2$-cofinite VOAs, essentially by adding the term `vector valued' in front of all modular objects.

For mixed correlators we find first that the functions are no longer elliptic, but pick up an elliptic anomaly under $z\mapsto z+\tau$. This is not surprising and an immediate consequence of the fact that the vertex operators $Y(\zeta^{L_0}a,\zeta)$ do not necessarily commute with the zero modes $b_0$. Second, we find that the correlators also pick up an anomaly under modular transformations. Taken together this implies that they are (weakly holomorphic) \emph{quasi-Jacobi forms} \cite{MR2796409,MR3248159}.

Roughly speaking, quasi-Jacobi forms are to Jacobi forms what quasi-modular forms are to modular forms.
In particular, zero mode correlators are quasi-modular forms. 
Quasi-modular forms are well known \cite{MR1363056}. One standard example is the Eisenstein series $G_2(\tau)$, which transforms as
\be\label{G2}
(c\tau+d)^{-2}G_2(\frac{a\tau+b}{c\tau+d})=G_2(\tau)-\frac{2\pi i c}{c\tau+d}\ 
\ee for any $\begin{pmatrix}a&b\\c&d\end{pmatrix}\in SL(2,\Z),$ see \eqref{Eisenstein2} below for the definition.
Another standard example are $\tau$-derivatives of modular forms. Both these examples will play an important role in our work. Transforming a general quasi-modular form gives the same structure as (\ref{G2}): a head term, which is the term expected for a modular form of that weight, and a tail of terms encoding the modular anomaly.

To prove the above results we introduce recursion relations. These recursion relations allow us to express correlators in terms of correlators with fewer vertex operators inserted. Such recursion relations were first introduced in \cite{Z} for the case of correlators with no zero modes. \cite{Gaberdiel:2012yb} then derived a generalization for powers of a single zero mode inserted. In this article we give a recursion relation for the most general case (\ref{intromixcorr}) of different, possibly non-commuting zero modes $b_0^i$. See also \cite{MR4082238} for other uses of such recursion relations.

We obtain our result for mixed correlators by applying the recursion relation to (\ref{intromixcorr}), reducing the number of zero modes inserted until we are left with no zero modes. In the process, we pick up functions $g_j^{i}(z,\tau)$ defined in line \eqref{definition_g_j^i} below. The result then follows from Zhu's original theorem and the fact that the $g_j^{i}(z,\tau)$ are quasi-Jacobi forms. The result for zero mode correlators is simply a specialization to functions that do not depend on $z$.

Beyond investigating the general structure of mixed and zero mode correlators, we have a concrete question in mind. It is well known that the zero mode correlators of a state of weight 1 have nice modular transformation properties. 
More precisely, let $a$ be a weight 1 state. Then by the results of \cite{MR1738180}, see also \cite{MR2925472}, the generating function for the correlator with $n$ zero modes $a_0$ is a Jacobi form of weight 0. That is, defining
\be
\chi(\tau,z) := \Tr e^{2\pi iz a_0}q^{L_0-c/24}
= \sum_{n=0}^\infty \frac{(2\pi iz)^n}{n!} \Tr (a_0)^n q^{L_0-c/24}\ ,
\ee
$\chi(\tau,z)$ transforms as
\be\label{weight1generating}
\chi(\gamma\tau, \gamma z) = e^{\frac{\pi i \langle a,a\rangle c z^2}{c\tau+d}} \chi(\tau,z)\ .
\ee
A natural question is: What happens if we replace $a$ by a higher weight state? Is the generating function still some modular object? There has been work on this for free boson and lattice VOAs \cite{MR1827085}, for the $W_3$ algebra \cite{Iles:2014gra}, and for KdV charges \cite{Downing:2021mfw,Downing:2023lnp, Downing:2023lop,Downing:2024nfb}. For the general case this is still an open question.
Beyond its obvious mathematical interest, this question is also of great interest to physicists in the context of higher spin holography, see for example \cite{Gutperle:2011kf,Gaberdiel:2013jca,Castro:2016ehj}.

In this article we take steps towards an answer. In particular we will give conditions on the higher weight states inserted that are necessary to mimic (\ref{weight1generating}).
Our approach is the following.
By expanding (\ref{weight1generating}) in $z$ and comparing terms, we find the following transformation property for the zero mode correlators:
\be\label{weight1zeromodes}
(c\tau+d)^{-s} \Tr (a_0)^s e^{2\pi i\gamma\tau(L_0-c/24)} = \sum_{k=0}^{s/2} \left(\frac{c \langle a,a\rangle}{(c\tau+d)2\pi i}\right)^k \frac{s!}{2^k k! (s-2k)!} \Tr (a_0)^{s-2k} e^{2\pi i\tau (L_0-c/24)}. 
\ee
As expected from Theorem \ref{main}, these zero mode correlators are indeed quasi-modular forms. More importantly, they satisfy  two additional properties: First, as quasi-modular forms they are homogeneous of weight $s$. Second, their tail is a sum of lower point zero mode correlators. These two properties allow them to be summed up into an exponential function that transforms as (\ref{weight1generating}).

To repeat this for higher weight states, we want to impose conditions on the fields so that the zero mode correlators mimic the above properties of the correlation functions of the Heisenberg algebra. To this end we define what we call a \emph{higher weight} or \emph{heavy Heisenberg algebra (HHA)}. We show that its zero mode correlators satisfy two properties similar to (\ref{weight1zeromodes}): First, they are quasi-modular forms of homogeneous weight. We achieve this by requiring that the zero modes of the HHA fields commute. Second, in the recursion relations only zero mode correlators of fields in the HHA should appear. We achieve this by requiring that the HHA is closed in an appropriate sense. 
In section~\ref{s:sVOA} we introduce the notion of such an HHA  and show that it indeed leads to zero mode correlators with the right properties. In section~\ref{s:examples} we give examples of such HHA for weight 1 and 2 fields. The weight 1 example was of course already treated in \cite{MR1738180}. However, we give a shorter proof of (\ref{weight1generating}). We also give a general example of a weight 2 HHA, and then work out its correlators explicitly for the case of lattice VOAs by using the results of \cite{MR1827085}.

\subsection*{Acknowledgements}
We thank Alejandra Castro, Matthias Gaberdiel, and Tom Hartman for comments on the draft. 
We thank two anonymous referees for very detailed and helpful feedback and for pointing out a gap in the original draft. The work of CAK is supported by NSF Grant 2111748.

\section{Basics}

\subsection{Quasi-modular forms and quasi-Jacobi forms}

Let us first gather some definitions and examples of modular objects that play an important role in computing our correlators.
The following definitions are what is needed to describe correlators of holomorphic VOAs. To deal with rational $C_2$-cofinite VOAs, one would introduce vector valued versions of the modular objects below.
Other than the usual modular forms for $SL(2,\Z)$, we will also need \emph{weakly holomorphic} modular forms, that is forms that are holomorphic on the upper half plane with a pole at the cusp $\tau=i\infty$.

First we define a generalization of modular forms to quasi-modular forms \cite{MR1363056}:

\begin{defn}Let $\mathcal{H}$ denote the complex upper half plane. Let $f:\mathcal{H}\rightarrow \mathbb{C}$ be a holomorphic function, $k$ and $s\ge0$ be integers. Then $f$ is called a {{\it quasi-modular function}} of weight $k$ and depth $s$ if there exist holomorphic functions $f_0, \cdots, f_s$ over $\mathcal{H}$ with $f_s$ not identically $0$, such that
\begin{equation}
(c\tau+d)^{-k}f\left(\frac{a\tau+b}{c\tau+d}\right)=\sum_{j=0}^sf_j(\tau)\left(\frac{c}{c\tau+d}\right)^j
\end{equation}for any $\begin{pmatrix}a&b\\c&d\end{pmatrix}\in SL(2,\Z)$ and any $\tau\in \mathcal{H}$. If all $f_j(\tau)$ grow at most polynomially as $\Im \tau \rightarrow 0$, then we say $f(\tau)$ is a \emph{quasi-modular form}, see \cite{MR3025137} for an introduction to the subject.
\end{defn}

Next we define weak Jacobi forms for multiple variables. These are a generalization of the Jacobi forms introduced in \cite{MR781735}. We follow \cite{MR3248159} here:
\begin{defn}
Let $F$ be a real symmetric $n\times n$ matrix. A function $\phi: \cH\times \C^n \to \C$ holomorphic in $\cH$ and meromorphic in $\C^n$ is a \emph{weak Jacobi form of weight $k$, index $F$} and trivial character  if it has an expansion of the form
\be\label{Jfexpansion}
\phi(\tau,\underline z)=\sum_{\ell\in \mathbb{N}, \underline{\nu}\in \Z^n } c(\ell,\underline{\nu}) q^\ell \exp(2\pi i (\underline z^T \underline\nu))\ ,
\ee
and for all $\gamma=\begin{pmatrix}a&b\\c&d\end{pmatrix}\in SL(2,\Z)$ and $(\underline \nu,\underline \mu)\in \Z^n \times \Z^n$ we have
\be
\phi(\gamma\tau,\gamma \underline z) = (c\tau+d)^k \exp \left( 2\pi i \frac{c F[ \underline z ]}{c\tau+d} \right) \phi(\tau, \underline z)
\ee
and
\be
\phi(\tau,\underline z+\underline \lambda \tau+\underline \mu) = \exp (-2\pi i(\tau F[\underline\lambda]+2\underline z^T F \underline \lambda))\phi(\tau,\underline z)\ .
\ee
\end{defn}
Note that our definition differs slightly from the original definitions \cite{MR781735,MR3248159} insofar as we allow the functions to have poles in the $\underline z$. This allows for non-constant forms of index 0 such as Weierstrass functions. The reason to use this broader definition is that we want to describe VOA correlators, which do have poles in $\underline z$.

There is a corresponding notion of a weakly holomorphic Jacobi form by allowing for a pole of say order $p$ at $q=0$, that is by allowing $\ell$ to run from $-p$.
Jacobi forms will appear in two contexts: On the one hand, by the results of Zhu \cite{Z}, correlators of holomorphic VOAs with $c=24p$ are weakly holomorphic Jacobi forms of index 0. On the other hand, the generating function of zero-mode correlators of a weight 1 state is a Jacobi form of weight 0 \cite{MR1738180}.

Next we want to define quasi-Jacobi forms. Here we follow \cite{MR2796409} and again \cite{MR3248159}.

\begin{defn}
A function $\phi: \cH\times \C^n \to \C$ holomorphic in $\cH$ and meromorphic in $\C^n$ is a \emph{weak quasi-Jacobi  form of weight $k$, index $F$} and trivial character if it has an expansion (\ref{Jfexpansion}) and there are natural numbers $s_1,\ldots,s_n,t$ and meromorphic functions $S_{i_1,\ldots,i_n,j}(\phi)$ and $T_{i_1,\ldots i_n}(\phi)$ determined only by $\phi$ but independent of $\gamma$ and $\underline \mu, \underline \lambda$ such that
\begin{multline}\label{quasimodular}
(c\tau+d)^{-k}\exp \left(- 2\pi i \frac{c F[ \underline z ]}{c\tau+d} \right) \phi(\gamma\tau,\gamma\underline z) =\\
\sum_{\substack{i_1\leq s_1,\ldots i_n\leq s_n\\ j\leq t}} S_{i_1,\ldots ,i_n,j}(\phi)(\tau,\underline z) \left(\frac{cz_1}{c\tau+d}\right)^{i_1}\cdots\left(\frac{cz_n}{c\tau+d}\right)^{i_n} \left(\frac{c}{c\tau+d}\right)^{j}\ ,
\end{multline}
and
\be\label{quasielliptic}
\exp (2\pi i(\tau F[\underline\lambda]+2\underline z^T F \underline \lambda))\phi(\tau,\underline z+\underline \lambda \tau+\underline \mu) =\\ \sum_{\substack{i_1\leq s_1,\ldots i_n\leq s_n}} T_{i_1,\ldots, i_n}(\phi)(\tau,\underline z) \lambda_1^{i_1}\cdots \lambda_n^{i_n}\ .
\ee
Taking $S_{s_1,\ldots, s_n,t}(\phi)\neq 0$ or $T_{s_1,\ldots, s_n}(\phi)\neq 0$ we say $\phi$ has \emph{depth} $(s_1,\ldots,s_n,t)$.
\end{defn}

We note that weak quasi-Jacobi forms form a ring. Under multiplication the weights are additive and the depths are subadditive.  Moreover, this ring is closed under application of $\partial_{z_i},\partial_\tau$ and multiplication by $G_2(\tau)$.

\subsection{Examples}

In this section, we review some definitions and results which will be used throughout the paper. This material and proofs of the various statements can be found in Section $3$ of \cite{Z} and in \cite{MR890960}.

\begin{defn}
The Eisenstein series $G_{2k}(\tau)$ are the series 
\begin{equation}G_{2k}(\tau):=\sum_{(m,n)\in \Z^2\setminus\{(0,0)\}}\frac{1}{(m\tau+n)^{2k}}
\end{equation}for $k\ge 2$ and
\begin{equation}\label{Eisenstein2}G_2(\tau):=\frac{\pi^2}{3}+\sum_{m\in \mathbb{Z}\setminus\{0\}}\sum_{n\in \mathbb{Z}}\frac{1}{(m\tau+n)^2}.
\end{equation}
\end{defn}
Let $\xi(2k)=\sum_{n=1}^\infty\frac{1}{n^{2k}}$, $q=e^{2\pi i\tau}$, and $\sigma_k(n)=\displaystyle\sum_{\substack{d|n\\d>0}}d^k$. Then the $q$-expansion of the Eisenstein series $G_{2k}(\tau)$ is given by
\begin{equation}G_{2k}(\tau)=2\xi (2k)+\frac{2(2\pi i )^{2k}}{(2k-1)!}\sum_{n=1}^\infty\sigma_{2k-1}(n)q^n.
\end{equation}
As is well known, if $k\ge 2$, the series $G_{2k}(\tau)$ is a modular form of weight $2k$ for the modular group $SL(2,\Z)$. The series $G_2(\tau)$ is a quasi-modular form of weight 2 and depth 1. Its transformation under $SL(2,\Z)$ is given by \eqref{G2} above.

We next recall the definition of the Weierstrass $\wp$-function and its relatives.
\begin{defn}Let 
\begin{equation}
\wp_1(z,\tau)=\frac{1}{z}+\sum_{\substack{(m,n)\in \mathbb{Z}\oplus\mathbb{Z}\\(m,n)\in \Z^2\setminus\{(0,0)\}}}\left(\frac{1}{z-(m\tau+n)}+\frac{1}{m\tau+n}+\frac{z}{(m\tau+n)^2}\right)
\end{equation}
and
\begin{equation}
\wp_2(z,\tau)=\frac{1}{z^2}+\sum_{\substack{(m,n)\in \mathbb{Z}\oplus\mathbb{Z}\\(m,n)\in \Z^2\setminus\{(0,0)\}}}\left(\frac{1}{(z-(m\tau+n))^2}-\frac{1}{(m\tau+n)^2}\right).\end{equation} 
The function $\wp_2(z,\tau)$ is often simply called the \emph{Weierstrass $\wp$-function} and is denoted by $\wp(z,\tau)$. $\wp_1(z,\tau)$ is sometimes also called the \emph{Weierstrass zeta function} $\zeta(z,\tau)$ \cite{MR890960}. Both functions converge absolutely and uniformly on any compact subset not containing lattice points, so that we are allowed to take derivatives:
For $k>2$, define \begin{equation}\wp_{k+1}(z, \tau)=-\frac{1}{k}\frac{d}{dz}\wp_k(z,\tau).\end{equation}
\end{defn}
For $k\ge 1$, the Laurent expansion of $\wp_k(z,\tau)$ near $z=0$ is given by
\begin{equation}
\wp_k(z,\tau)=\frac{1}{z^k}+(-1)^k\sum_{n=1}^\infty\binom{2n+1}{k-1}G_{2n+2}(\tau)z^{2n+2-k}\ ,
\end{equation}
from which its modular transformation properties follow.
Let $\zeta=e^{2\pi i z}$ and $q=e^{2\pi i \tau}$. The functions $\wp_1(z,\tau)$ and $\wp_2(z,\tau)$ have $q$-expansions given by 
\begin{equation}\label{wp1}
\wp_1(z, \tau)=G_2(\tau)z+\pi i \frac{\zeta+1}{\zeta-1}+2\pi i \sum_{n=1}^\infty \left(\frac{q^n}{\zeta-q^n}-\frac{\zeta q^n}{1-\zeta q^n}\right)
\end{equation} 
and 
\begin{equation}\label{wp2}
\wp_2(z, \tau)=-\frac{\pi^2}{3}+(2\pi i)^2\sum_{m\in \mathbb{Z}}\frac{q^m\zeta}{(1-q 
\zeta)^2}-2(2\pi i)^2\sum_{n=1}^\infty\frac{n q^n}{1-q^n},\end{equation}
where the fractions in Equations \eqref{wp1} and \eqref{wp2} are expanded in nonnegative powers of $q$. 

\begin{defn}
For $k\ge 1$, let $P_k(\zeta,q)$ be the formal power series
\begin{equation}\label{Pk}
P_k(\zeta, q)=\frac{(2\pi i)^k}{(k-1)!}\sum_{n\in \Z\setminus\{0\}}\frac{n^{k-1}\zeta^n}{1-q^n}
\end{equation}
where $\frac{1}{1-q^n}=\sum_{i=0}^\infty q^{ni}$.
\end{defn}

\begin{prop}\label{prop:Pexpanded}
The formal power series $P_k(\zeta,q)$ converge uniformly and absolutely in every closed subset of the domain $\{(\zeta,q)||q|<|\zeta|<1\}$. The limit of $P_k(\zeta,q)$ which we still denote by $P_k(\zeta,q)$ satisfy the following formulas
\begin{equation}P_1(\zeta, q)=-\wp_1(z,\tau)+G_2(\tau)z-\pi i,
\end{equation}
\begin{equation}P_2(\zeta, q)=\wp_2(z,\tau)+G_2(\tau),
\end{equation} and for $k>2$
\begin{equation}P_k(\zeta, q)=(-1)^k\wp_k(z,\tau),
\end{equation}where $\zeta=e^{2\pi i z}$ and $q=e^{2\pi i \tau}$.
\end{prop}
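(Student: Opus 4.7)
The plan has three stages: (i) establish absolute and uniform convergence of the double series on compact subsets of the annular region $\{|q|<|\zeta|<1\}$; (ii) verify the identity for $k=1$ by directly matching the rearranged double series against the $q$-expansion~\eqref{wp1} of $\wp_1$; and (iii) derive all remaining cases $k\ge 2$ by differentiating in $z$.

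For convergence, first I would split
\begin{equation*}
P_k(\zeta,q) = \frac{(2\pi i)^k}{(k-1)!}\left(\sum_{n\ge 1}\frac{n^{k-1}\zeta^n}{1-q^n} + \sum_{n\le -1}\frac{n^{k-1}\zeta^n}{1-q^n}\right).
\end{equation*}
For the $n\ge 1$ part, expanding $(1-q^n)^{-1}=\sum_{i\ge 0}q^{ni}$ gives the uniform bound $\sum_{n\ge 1}n^{k-1}|\zeta|^n/(1-|q|)$, convergent since $|\zeta|<1$. For the $n\le -1$ part, substituting $n=-m$ and rewriting $(1-q^{-m})^{-1}=-q^m(1-q^m)^{-1}$, each term is dominated by $m^{k-1}|q/\zeta|^m/(1-|q|)$, convergent because $|q|<|\zeta|$. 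A uniform sup-bound on any compact subset of $\{|q|<|\zeta|<1\}$ then yields uniform absolute convergence, so all subsequent rearrangements and term-by-term $z$-differentiations are legitimate.

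For $k=1$, I would use absolute convergence to split
\begin{equation*}
P_1(\zeta,q) = 2\pi i\sum_{n\ge 1}\frac{\zeta^n}{1-q^n} \;-\; 2\pi i\sum_{m\ge 1}\frac{q^m\zeta^{-m}}{1-q^m}.
\end{equation*}
Writing $\zeta^n/(1-q^n) = \zeta^n + \zeta^n q^n/(1-q^n)$ isolates the purely geometric contribution $2\pi i\,\zeta/(1-\zeta) = -2\pi i + 2\pi i/(1-\zeta)$. On the other side, I would rewrite $\pi i(\zeta+1)/(\zeta-1) = -\pi i - 2\pi i/(1-\zeta)$ and expand the sum in~\eqref{wp1} via $q^n/(\zeta-q^n) = \sum_{m\ge 1}(q^n/\zeta)^m$ and $\zeta q^n/(1-\zeta q^n) = \sum_{m\ge 1}(\zeta q^n)^m$. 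Swapping the order of summation matches the two resulting double sums to the tails of $P_1$ term-by-term, and the constant bookkeeping produces precisely the offset $-\pi i$ together with the cancelled $G_2(\tau)z$ piece, yielding $P_1 = -\wp_1 + G_2\,z - \pi i$.

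For $k\ge 2$, term-by-term differentiation using $\frac{d}{dz}\zeta^n = 2\pi i\,n\zeta^n$ gives the clean recursion $P_{k+1} = \frac{1}{k}\frac{d}{dz}P_k$, which mirrors $\wp_{k+1} = -\frac{1}{k}\frac{d}{dz}\wp_k$ with a sign flip. Differentiating the $k=1$ identity once and using $\frac{d}{dz}\wp_1 = -\wp_2$ yields $P_2 = \wp_2 + G_2$; induction in $k$ then propagates $P_k = (-1)^k\wp_k$ for $k\ge 3$, the additive $G_2$ dropping out under $d/dz$. The main obstacle is the bookkeeping in stage (ii): choosing the correct partial-fraction decompositions so that the reordered double series in $P_1$ align termwise with~\eqref{wp1}, and tracking how the boundary constant $-\pi i$ and the quasi-modular piece $G_2(\tau)z$ emerge from the $n\ge 1$ versus $n\le -1$ split, is the step that requires the most care.
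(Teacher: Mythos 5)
Your plan is correct and is essentially the standard argument that the paper defers to (Zhu, Section 3, and Lang): split off $n\le -1$ via $(1-q^{-m})^{-1}=-q^m(1-q^m)^{-1}$ to get convergence on $|q|<|\zeta|<1$, match the rearranged double series for $P_1$ against the $q$-expansion \eqref{wp1}, and propagate to $k\ge 2$ with $P_{k+1}=\tfrac1k\partial_z P_k$ versus $\wp_{k+1}=-\tfrac1k\partial_z\wp_k$. One small slip in your stage (ii): the correct partial fraction is $\pi i\,\frac{\zeta+1}{\zeta-1}=\pi i-\frac{2\pi i}{1-\zeta}$, not $-\pi i-\frac{2\pi i}{1-\zeta}$; with that fixed, $-\pi i\frac{\zeta+1}{\zeta-1}-\pi i=\frac{2\pi i\,\zeta}{1-\zeta}$ matches the geometric piece $2\pi i\sum_{n\ge1}\zeta^n$ exactly and the remaining double sums align termwise as you describe.
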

We have 
\be
\zeta \frac{d}{d\zeta} P_k(\zeta,q) = \frac{k}{2\pi i} P_{k+1}(\zeta,q)\ .
\ee
For future convenience let us also define 
\be
\tilde P_1(\zeta,q) = P_1(\zeta,q)+\pi i\ .
\ee
In what follows, we will often slightly abuse our notation and write $P_k(z,\tau)$ for $P_k(\zeta,q)$.
Using proposition~\ref{prop:Pexpanded} and the elliptic properties of the $\wp_k(z,\tau)$ given in \cite{Z}, \cite{MR890960}, one easily verifies that the $P_k$ satisfy the transformation properties given in the following proposition:
\begin{prop}
The $P_k$ satisfy $P_k(z+1,\tau)=P_k(z,\tau)$ and
\be\label{Pkelliptic}
P_1(z+\tau,\tau) = P_1(z,\tau) +2\pi i \qquad P_k(z+\tau,\tau) = P_k(z,\tau) \qquad k>1\ .
\ee
Moreover, given $\begin{pmatrix}a&b\\c&d\end{pmatrix}\in SL(2,\mathbb{Z}),$
\begin{equation}
\tilde P_1(\gamma z, \gamma\tau)=(c\tau+d)\tilde P_1(z, \tau)-2\pi i cz,
\end{equation}
\begin{equation}
P_2(\gamma z, \gamma\tau)=(c\tau+d)^2P_2(z,\tau)-2\pi i c(c\tau+d),
\end{equation}
and if $k> 2,$
\begin{equation}P_k(\gamma z,\gamma\tau)=(c\tau+d)^kP_k(z,\tau).\end{equation}
\end{prop}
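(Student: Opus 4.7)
The plan is to reduce the statements about $P_k$ to the known transformation properties of the Weierstrass functions $\wp_k$ and of the Eisenstein series $G_2$, using Proposition \ref{prop:Pexpanded}. Concretely, I would use the identities
\begin{equation*}
P_1 = -\wp_1 + G_2(\tau) z - \pi i, \qquad P_2 = \wp_2 + G_2(\tau), \qquad P_k = (-1)^k \wp_k \ (k>2),
\end{equation*}
so that $\tilde P_1 = -\wp_1 + G_2(\tau) z$.

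\textbf{Ellipticity.} For $P_k(z+1,\tau)=P_k(z,\tau)$ I would argue directly from the defining series \eqref{Pk}: $\zeta=e^{2\pi i z}$ is invariant under $z\mapsto z+1$, and the inner sum over $n\in\Z\setminus\{0\}$ is independent of $z$ past this shift. For the $\tau$-shift I would use the quasi-periodicity of $\wp_1$: from the Laurent expansion $\wp_1(z,\tau)=\frac{1}{z}-\sum_{n\ge 1}G_{2n+2}(\tau)z^{2n+1}$ and the fact that $\wp_k$ for $k\ge 2$ is a true elliptic function, one gets $\wp_k(z+\tau,\tau)=\wp_k(z,\tau)$ for $k\ge 2$ and $\wp_1(z+\tau,\tau)=\wp_1(z,\tau)+G_2(\tau)\tau-2\pi i$ (the latter is standard and is the one point where $G_2$ enters on the elliptic side). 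Substituting into the identities above yields both $P_k(z+\tau,\tau)=P_k(z,\tau)$ for $k>1$ and $P_1(z+\tau,\tau)=P_1(z,\tau)+2\pi i$.

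\textbf{Modular transformations.} The key input is the modular behavior of $\wp_k$, which I would derive termwise from the Laurent expansion
\begin{equation*}
\wp_k(z,\tau)=\frac{1}{z^k}+(-1)^k\sum_{n=1}^\infty\binom{2n+1}{k-1}G_{2n+2}(\tau)z^{2n+2-k},
\end{equation*}
together with the fact that $G_{2n+2}$ is a modular form of weight $2n+2$ for $n\ge 1$. Crucially, $\wp_1$'s expansion does not involve $G_2$, so $\wp_1(\gamma z,\gamma\tau)=(c\tau+d)\wp_1(z,\tau)$; analogously $\wp_k(\gamma z,\gamma\tau)=(c\tau+d)^k\wp_k(z,\tau)$ for $k\ge 2$. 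For $k>2$ the claim $P_k(\gamma z,\gamma\tau)=(c\tau+d)^k P_k(z,\tau)$ is then immediate from $P_k=(-1)^k\wp_k$. For $k=2$, substituting the quasi-modular anomaly \eqref{G2},
\begin{equation*}
G_2(\gamma\tau)=(c\tau+d)^2 G_2(\tau)-2\pi i c(c\tau+d),
\end{equation*}
into $P_2=\wp_2+G_2$ produces exactly the stated tail $-2\pi i c(c\tau+d)$. For $\tilde P_1$ I would combine the weight-1 modularity of $\wp_1$ with \eqref{G2}: the factor $G_2(\gamma\tau)\cdot \gamma z=(c\tau+d)G_2(\tau)z-2\pi i c z$ supplies the correction $-2\pi i cz$, while the $\wp_1$ piece gives the $(c\tau+d)$ weight.

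\textbf{Main obstacle.} The only nontrivial bookkeeping is verifying that the $G_2$-anomalies, once multiplied by the appropriate powers of $z$ and $(c\tau+d)^{-1}$, combine cleanly into the head/tail form demanded by the proposition rather than leaving residual $z$-dependent terms. After that the rest is substitution. Modularity of $\wp_k$ itself could alternatively be obtained by noting that $\wp_k$ is a $z$-derivative of an elliptic quantity plus modular pieces, but the Laurent-series argument is the cleanest way to handle $\wp_1$ together with $k\ge 2$ uniformly.
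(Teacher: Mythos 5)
Your proposal is correct and follows essentially the same route the paper indicates (the paper does not write out a proof but states that the proposition follows from Proposition \ref{prop:Pexpanded} together with the known elliptic and modular properties of the $\wp_k$ and $G_2$); your substitution of $P_1=-\wp_1+G_2 z-\pi i$, $P_2=\wp_2+G_2$, $P_k=(-1)^k\wp_k$ and the bookkeeping of the $G_2$-anomaly are exactly that verification. The key observations — that the Laurent expansions of the $\wp_k$ involve only $G_{2n+2}$ with $n\ge 1$ so the $\wp_k$ transform with genuine weight $k$, and that the quasi-period $\eta_\tau=G_2(\tau)\tau-2\pi i$ supplies the $+2\pi i$ shift for $P_1$ — are correctly identified and correctly used.
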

From this it follows that
\begin{cor}
 The $P_k$ have the following structure:
\begin{itemize}
\item For $k>2$, $P_k(z,\tau)$ is a Jacobi form of index 0 and weight $k$.
\item $P_2(z,\tau)$ is a quasi-Jacobi form of index 0, weight 2 and depth $(0,1)$.
\item $\tilde P_1(z,\tau)$ is a quasi-Jacobi form of index 0, weight 1 and depth $(1,0)$.
\end{itemize}
\end{cor}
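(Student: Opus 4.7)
My plan is to verify each of the three cases directly by matching the transformation formulas from the preceding proposition against the definitions of weak (quasi-)Jacobi form. A useful simplification up front is that the index is $F=0$ in every case, so the exponential prefactors in the (quasi-)Jacobi definitions (both the modular factor $\exp(-2\pi i cF[z]/(c\tau+d))$ and the elliptic one $\exp(2\pi i(\tau F[\lambda]+2z^T F\lambda))$) collapse to $1$. The quasi-Jacobi conditions thus reduce to plain polynomial expansions in $c/(c\tau+d)$, $cz/(c\tau+d)$, and $\lambda$.

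For $k>2$, the proposition already gives the Jacobi form transformation $P_k(\gamma z,\gamma\tau) = (c\tau+d)^k P_k(z,\tau)$ and the double periodicity, so the only remaining point is the $q$-expansion required by \eqref{Jfexpansion}; I would extract this directly from \eqref{Pk} by splitting $n\in\Z\setminus\{0\}$ into positive and negative parts, expanding $1/(1-q^n)$ geometrically for $n>0$ and rewriting $1/(1-q^n) = -q^{-n}/(1-q^{-n})$ for $n<0$, which produces a double sum with nonnegative powers of $q$. Poles in $z$ at lattice points are allowed by our broadened definition. For $P_2$, dividing the modular identity by $(c\tau+d)^2$ puts it into the shape \eqref{quasimodular} with one non-trivial coefficient $S_{0,1}(P_2) = -2\pi i$, giving modular depth $(0,1)$; combined with the ellipticity of $P_2$ (so the quasi-elliptic expansion \eqref{quasielliptic} has only $T_0(P_2) = P_2$), we read off depth $(0,1)$. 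Analogously for $\tilde P_1$: dividing by $(c\tau+d)$ produces $S_{1,0}(\tilde P_1) = -2\pi i$, and the elliptic shift $\tilde P_1(z+\lambda\tau+\mu,\tau) = \tilde P_1(z,\tau) + 2\pi i\lambda$ (inherited from \eqref{Pkelliptic}, since the additive constant $\pi i$ is invariant under translation) gives $T_1(\tilde P_1) = 2\pi i$, consistent with depth $(1,0)$.

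There is no serious obstacle; the statement is essentially a dictionary translation of the preceding proposition. The one bookkeeping item worth flagging is that in each quasi-Jacobi case one must check the depth read off from the modular expansion matches the depth read off from the elliptic expansion, which is precisely what both computations above achieve. This matching is also the reason $\tilde P_1$ rather than $P_1$ is the natural object: the constant shift by $\pi i$ is what cleanly aligns the two expansions.
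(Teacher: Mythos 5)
Your proposal is correct and matches the paper's treatment: the paper gives no separate proof, simply asserting the corollary follows from the preceding proposition and recording the same coefficients you compute ($S_{0,1}(P_2)=-2\pi i$ with $T_1=0$, and $S_{1,0}(\tilde P_1)=-2\pi i$ with $T_1(\tilde P_1)=2\pi i$). Your added care about the $q$-expansion from \eqref{Pk} and the role of the $\pi i$ shift in $\tilde P_1$ is consistent with, and slightly more explicit than, what the paper writes.
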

For instance, for $P_2(z,\tau)$ we have $S_{0,1}(\tau,z)= -2\pi i$ and $T_1=0$. For $\tilde P_1(z,\tau)$ we have $S_{1,0}=-2\pi i$ and $T_1=2\pi i$. 

Next, as in equation (A.5) and (A.7) of \cite{Gaberdiel:2012yb}, for $j>0$ and $i\ge 0$ we define the functions
\be\label{definition_g_j^i}
g_j^i(z, \tau):=\frac{(2\pi i )^j}{(j-1)!}\sum_{n\in \Z\setminus\{0\}}n^{j-i-1}z^n\partial_\tau^i(1-q^n)^{-1}\ .
\ee
Thus for $j>i$
\be
g^i_{m+i}(z,\tau) = (2\pi i)^i \frac{(m-1)!}{(m+i-1)!} \partial^i_\tau P_{m}(z,\tau) \qquad i \geq 0\ .
\ee
Using the fact that $\partial_\tau$ maps  quasi-Jacobi forms of weight $k$ to quasi-Jacobi forms with weight $k+2$, it follows immediately that
for $i\geq 1$, $g^i_{m+i}(z,\tau)$ is a quasi-Jacobi form of index 0 and weight $m+2i$. More generally, $g^i_j(z,\tau)$ is a quasi-Jacobi form of index 0 and weight $i+j$. We give more details about this in appendix~\ref{app:gij}.

Finally, for computing explicit expressions, it can be useful to introduce the modular anomaly $\Delta f$: if $f(\tau,\underline{z})$ is a quasi-Jacobi form of weight $k$, then we define
\be
\Delta f = (c\tau+d)^{-k} f(\gamma \tau,\gamma\underline z) - f(\tau,\underline z).
\ee
Note that for $f$ of weight $k$ and $g$ of weight $l$
\be
\Delta (fg) = f \Delta g + (\Delta f) g + (\Delta f) (\Delta g)\ .
\ee
We have
\be
\Delta \tilde P_1 = - \frac{2\pi i c z}{c\tau+d}\ , 
\qquad \Delta P_2 = - \frac{2\pi ic}{c \tau+d},
\ee

\be
\Delta g_2^1 = \frac{2\pi i c}{c\tau+d}\left(P_1(z, \tau)+\pi i\right), 
\qquad \Delta g_3^1 =  \frac{2\pi ic}{c \tau+d}P_2(z,\tau)-\frac{1}{2}\left(\frac{2\pi i c}{c\tau+d}\right)^2
\ee
and if $m>2$
\be
\Delta g^1_{m+1} = \frac{2\pi i c}{c\tau+d} P_m(z,\tau)\ .
\ee

\subsection{VOA Correlators and Zhu's result}
Given an operator and states $a^i$ in some VOA, we define 
\be
F_M(O;(a^1,\zeta_1),\ldots,(a^n,\zeta_n);\tau) := \Tr_M O Y(\zeta_1^{L_0}a^1,\zeta_1) \ldots Y(\zeta_n^{L_0} a^n,\zeta_n) q^{L_0}.
\ee
Closely related we define 
\be
S_M(O;(a^1,z_1),\ldots,(a^n,z_n);\tau) := F_M(O;(a^1,e^{2\pi iz_1}),\ldots,(a^n,e^{2\pi iz_n});\tau)q^{-c/24}.
\ee
In working with vertex operators, there are different conventions for modes. The standard definition in mathematics is to use modes $a(n)$ as in
\be
Y(a,\zeta) = \sum_{n\in \Z} a(n)\zeta^{-n-1}\ .
\ee
In some cases the physics convention of using modes $a_n$ can be more useful,
\be\label{physmode}
Y(\zeta^{L_0}a,\zeta)= \sum_{n\in \Z} a_n \zeta^{-n}\ .
\ee
This is particularly because for a homogeneous state $a$ of weight $h_a$ with respect to $L_0$, $a_n=a(n+h_a-1)$ is homogeneous of degree $-n$. In particular, $a_0 =o(a):=a(h_a-1)$. Finally, when working on genus 1 surfaces, it can be useful to use torus modes $a[m]$ as introduced in \cite{Z} and defined by

\be Y[a,z]=\sum_{n\in \mathbb{Z}}a[n]z^{-n-1}:=Y(e^{2\pi i zL_0}a, e^{2\pi i z}-1).
\ee 

We have
\be
a[s]= s!(2\pi i)^{-s-1}\sum_{i=s}^\infty c(h_a,i,s)a(i) \ ,
\ee
where $h_a$ denotes the weight of $a$ with respect to $L_0$, and the coefficients $c(h_a,i,s)$ are defined by
\be
\binom{h_a-1+k}{i}=\sum_{s=0}^i c(h_a,i,s)k^s\ ,
\ee
so that in particular by lemma~4.3.1 in \cite{Z} 
\be
\sum_{i=s}^\infty c(h_a,i,s)z^i = \frac1{s!}(\ln(1+z))^s(1+z)^{h_a-1}\ .
\ee
These torus modes then lead to various identities that will be useful in what follows. For instance, for $b\geq 0$ we then have the identity 
\begin{multline}
\sum_{i=0}^\infty \sum_{k\neq 0}\binom{h_a-1+k}{i} k^b\frac{1}{1-q^k}x^k a(i)=
\sum_{i=0}^\infty \sum_{k\neq 0} \sum_{s=0}^i c(h_a,i,s)k^{b+s} \frac{1}{1-q^k}x^k a(i) =\\ 
\sum_{s=0}^\infty \left(\sum_{k\neq 0} k^{b+s} x^k\frac1{1-q^k}\right)\left(\sum_{i=s}^\infty c(h_a,i,s)a(i)\right)
= \sum_{s=0}^\infty \frac{(s+b)!}{(2\pi i)^{s+b+1}}P_{s+b+1}(x,q)\frac{(2\pi i)^{s+1}}{s!} a[s]\\
= \sum_{s=0}^\infty \frac{(2\pi i)^{-b}(s+b)!}{s!} P_{s+b+1}(x,q)a[s]\ .
\end{multline}
Similarly there is equation (B.21) in \cite{Gaberdiel:2012yb}:
\begin{equation}\label{B.21}
\sum_{t=0}^\infty\sum_{k\ne 0}\binom{h_a-1+k}{t}k^{-i}\partial_\tau^i\frac{1}{1-q^k}x^k a_{t-h_a+1}=\sum_{m=0}^\infty g_{m+1}^i(x,q)a[m].
\end{equation} In general we will denote by $h_a$ the weight of $a$ with respect to $L_0$, and by $[h_a]$ the weight with respect to $L[0]$.

Finally we recall the main result of Zhu, which is theorem 5.3.2 in \cite{Z} combined with the comment below it:
\begin{theorem}\label{thm:Zhu}
Let $V$ be a $C_2$-cofinite rational vertex operator algebra, let $M_1,\cdots, M_m$ denote the complete list of irreducible modules of $V$. Let $a^1,\cdots, a^n$ be homogeneous for $L[0]$ with weights $[h_{a^1}], \cdots, [h_{a^n}].$ Then for every $\gamma=\begin{pmatrix}a&b\\c&d\end{pmatrix}\in SL(2,\Z),$
\begin{multline*}
S_{M_i}((a^1, \frac{z_1}{c\tau+d}), \cdots, (a^n,\frac{z_n}{c\tau+d}), \frac{a\tau+b}{c\tau+d})=(c\tau+d)^{\sum_{k=1}^n [h_{a^k}]}\sum_{j=1}^mA_{\gamma, j}^iS_{M_j}((a^1, z_1), \cdots, (a^n, z_n), \tau)\ ,
\end{multline*}
where the $A_{\gamma, j}^i$ are constants depending only on $\gamma, i,j$.
\end{theorem}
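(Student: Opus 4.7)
The plan is to prove the theorem by induction on the number of insertions $n$, using Zhu-type recursion relations to reduce to a finite-dimensional problem of one-point (or zero-point) correlators, and then invoking $C_2$-cofiniteness and rationality of $V$. For the inductive step, I would derive an identity expressing $S_M((a^1,z_1),\ldots,(a^n,z_n);\tau)$ as a sum of $(n-1)$-point correlators weighted by the kernels $P_k(z_i - z_j,\tau)$ and by $\tau$-derivatives of lower-point correlators. Since each $P_k$ with $k\ge 3$ is a genuine Jacobi form of weight $k$ transforming by $(c\tau+d)^k$ under $SL(2,\Z)$, and $\tilde P_1$, $P_2$ have the controlled quasi-modular anomalies recorded earlier in the excerpt, the inductive step yields the correct weight $\sum_k [h_{a^k}]$, with the quasi-modular pieces cancelling against the $G_2$-anomalies generated by $\tau$-derivative terms elsewhere in the recursion.

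For the base case I would show that the space of genus-one one-point functions is finite-dimensional and $SL(2,\Z)$-stable. Finite-dimensionality uses $C_2$-cofiniteness: the assignment $a \mapsto S_M(o(a);\tau)$ factors through a finite-dimensional quotient closely related to Zhu's algebra $A(V)$, and more strongly, for each fixed $a$, the function $S_M(o(a);\tau)$ satisfies a finite-order linear ODE in $\tau$ with modular-covariant coefficients built out of Eisenstein series. Rationality then ensures that the one-point functions over the $m$ irreducible modules span the full solution space $\mathcal{W}$ of this ODE system.

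To close the argument, I would observe that $(c\tau+d)^{-[h_a]} S_{M_i}(o(a);\gamma\tau)$ satisfies the same modular-covariant ODE as $S_{M_i}(o(a);\tau)$, so it lies in $\mathcal{W}$ and hence can be expanded as $\sum_j A^i_{\gamma,j} S_{M_j}(o(a);\tau)$ for constants $A^i_{\gamma,j}$ depending only on $\gamma, i, j$. The main obstacle is the $S$-transformation $\tau \mapsto -1/\tau$: invariance under $T: \tau \mapsto \tau+1$ is immediate from the $q$-expansion, but $S$ sends the cusp $\tau = i\infty$ into the interior of the upper half-plane, so \emph{a priori} the trace defining $S_{M_i}(o(a);\gamma\tau)$ need not coincide with the analytic continuation of the original trace. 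Establishing this requires first using the finite-order ODE to extend the trace function off its region of absolute convergence, and only then identifying the result on the extended domain as a linear combination of the basis functions $S_{M_j}$. Combining this with the inductive reduction then gives the theorem for general $n$.
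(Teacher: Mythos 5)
You should first be aware that the paper does not prove this statement at all: it is quoted directly from Zhu's work (Theorem~5.3.2 of \cite{Z} together with the remark following it), so there is no in-paper argument to compare yours against. Your sketch does follow the broad architecture of Zhu's original proof --- recursion relations reducing $n$-point functions to lower-point ones with $P_k$/Eisenstein kernels, $C_2$-cofiniteness yielding a finite-order ODE in $\tau$ with modular coefficients, and the use of that ODE to continue the trace beyond its domain of absolute convergence so that the $S$-transformation can be interpreted at all. Identifying that last subtlety is a genuine strength of the write-up.

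Two steps, however, do not survive scrutiny as stated. First, in the base case you assert that rationality makes the trace functions $S_{M_j}(o(a);\tau)$ span \emph{the full solution space} $\mathcal{W}$ of the ODE system. That is not what Zhu proves and is false in general: the ODE bounds growth and provides analytic continuation, but its solution space can strictly contain the span of the trace functions (it need not even consist of functions with the right $q$-expansions). Zhu instead introduces an abstractly defined, manifestly $SL(2,\Z)$-stable space of genus-one conformal blocks, uses $C_2$-cofiniteness to bound its dimension, and uses rationality to show the irreducible trace functions span \emph{that} space; the constants $A^i_{\gamma,j}$ come from expanding the transformed block in this basis. Without replacing ``solution space of the ODE'' by a modular-invariant space that the trace functions provably exhaust, your concluding step does not go through. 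Second, your inductive step claims the quasi-modular anomalies of $\tilde P_1$ and $P_2$ cancel against ``$G_2$-anomalies generated by $\tau$-derivative terms elsewhere in the recursion,'' but the recursion for full correlators (no zero modes inserted) contains no $\tau$-derivatives of lower-point correlators --- those only arise in the zero-mode generalizations developed in this paper --- so there is nothing for the anomalies to cancel against as described. Handling the anomalous pieces requires either Zhu's square-bracket/conformal-block formalism or a separate argument that they vanish (e.g.\ the cancellation of the inhomogeneous part of $P_1$ coming from sums over $a[0]$-insertions). Both issues are repairable, but as written they are genuine gaps rather than omitted routine details.
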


For simplicity in this article we will take our VOAs $V$ to be holomorphic with central charge a multiple of 24, $c=24p$; holomorphic here means $C_2$-cofinite and rational with only one irreducible module, namely $V$ itself. In the language of Jacobi forms, Zhu's theorem \ref{thm:Zhu} then states that $S((a^1,z_1),\ldots, (a^n,z_n);\tau)$ is a weakly holomorphic Jacobi form of index 0 and weight $\sum_{k=1}^n [h_{a^k}]$ with a pole of order at most $p$ at the cusp $\tau=i\infty$.

\section{Correlators with zero modes}

\subsection{Recursion relation for mixed correlators}
Let us now derive a recursion relation for correlators with zero modes inserted.
First we introduce some notation. We will be working with index vectors $\vec s = (s_1,s_2,\ldots , s_{s})$ of varying length $s$. We write $\vec s =\emptyset$ for the vector of length $s=0$. We define
\be
\vec s \cup \vec t = (s_1,\ldots,s_s,t_1,\ldots,t_t)\ .
\ee
Moreover we write $\vec t \subset \vec s $ if $\vec t$ is a subtuple of $\vec s$, that is
\be
\vec t = (s_{i_1},s_{i_2},\ldots, s_{i_t}) \qquad 1 \leq i_1 < i_2 <\ldots < i_t \leq s\ .
\ee
If $\vec t$ is such a subtuple of $\vec s$, then we denote by $\vec s - \vec t$ the tuple of length $s-t$ obtained by removing the $i_j$-th entries from $\vec s$. Given a set of states $b^i$, for an index vector $\vec s$ define
\be
b_0^{\vec s} = \prod_{i}^\leftarrow b_0^{s_i}
\ee
where the arrow indicates the order of the factors so that

\be
b_0^{(s_1, \cdots, s_s)}=b_0^{s_s}b_0^{s_{s-1}}  \cdots b_0^{s_1}.
\ee Given a state $a$ we also define recursively
\be
d^{\emptyset}_k(a) = a_k\ , \qquad d^{s_i\cup \vec t}_k(a) = -\frac1{2\pi i}[b_0^{s_i},d^{\vec t}_k(a)]\ 
\ee for $s-i$-tuple $\vec t=(s_{i+1},\cdots, s_s).$
\begin{lem}\label{lem:dstate}
The $d_k^{\vec s}(a)$ are modes of the state \be
d^{\vec s}(a)= (-1)^{s} \prod_{i}^\rightarrow b^{s_i}[0] a\ .
\ee
\end{lem}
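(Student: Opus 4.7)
The plan is to prove the lemma by induction on the length $s$ of the tuple $\vec s$, with the whole argument hinging on one core identity: for any state $c$ and any integer $k$,
\be\label{keyid}
-\frac{1}{2\pi i}[b_0, c_k] = -(b[0]c)_k\ ,
\ee
where on both sides we use the physics-mode convention \eqref{physmode}. With this identity in hand the induction is essentially automatic, so the lemma reduces to establishing \eqref{keyid}.

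To establish \eqref{keyid} I would start from the standard commutator formula $[b(m),c(n)] = \sum_{i\geq 0}\binom{m}{i}(b(i)c)(m+n-i)$. Setting $m=h_b-1$ so that $b(m)=b_0$, and writing $c(n) = c_k$ with $n=k+h_c-1$, the mode index on the right-hand side becomes $h_b-1+k+h_c-1-i$, which one checks is precisely the physics-mode index $k$ for the state $b(i)c$ (whose $L_0$-weight is $h_b+h_c-1-i$). Hence
\be
[b_0, c_k] = \sum_{i\geq 0}\binom{h_b-1}{i}(b(i)c)_k\ .
\ee
On the other hand, the explicit formula for the torus modes gives $b[0] = (2\pi i)^{-1}\sum_{i\geq 0} c(h_b,i,0)b(i)$, and from the identity $\sum_{i\geq 0}c(h_b,i,0)z^i=(1+z)^{h_b-1}$ (the $s=0$ case of Lemma 4.3.1 of \cite{Z}) one reads off $c(h_b,i,0)=\binom{h_b-1}{i}$. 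Therefore $2\pi i (b[0]c)_k=\sum_{i\geq 0}\binom{h_b-1}{i}(b(i)c)_k$, which is precisely \eqref{keyid}.

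With \eqref{keyid} established, the induction proceeds as follows. The base case $\vec s=\emptyset$ is the tautology $d^{\emptyset}_k(a)=a_k$. For the inductive step, suppose the lemma holds for all tuples of length $s-1$, and write $\vec s = s_1\cup \vec t$ with $\vec t=(s_2,\ldots,s_s)$. By the inductive hypothesis, $d^{\vec t}_k(a)=(d^{\vec t}(a))_k$ with $d^{\vec t}(a) = (-1)^{s-1}b^{s_2}[0]\cdots b^{s_s}[0]a$. Applying \eqref{keyid} with $c=d^{\vec t}(a)$ and $b=b^{s_1}$,
\be
d^{s_1\cup\vec t}_k(a) = -\frac{1}{2\pi i}[b_0^{s_1}, (d^{\vec t}(a))_k] = -(b^{s_1}[0]\,d^{\vec t}(a))_k = \bigl((-1)^s b^{s_1}[0]b^{s_2}[0]\cdots b^{s_s}[0]a\bigr)_k\ ,
\ee
which is exactly the mode of $d^{\vec s}(a)$ as claimed.

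The main obstacle is purely bookkeeping: keeping the conventions straight among ordinary modes $a(n)$, physics modes $a_n=a(n+h_a-1)$, and torus modes $a[n]$, and in particular verifying that the commutator $[b_0, c_k]$ produces a clean physics-mode index $k$ on each summand $(b(i)c)_k$ uniformly in $i$ (which is what makes the right-hand side of \eqref{keyid} be a single mode of a single state rather than a more complicated sum). Once this bookkeeping is checked, no analytic or algebraic difficulty remains.
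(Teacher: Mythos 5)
Your proof is correct and follows essentially the same route as the paper: induction on the length of $\vec s$, driven by the single identity $-\frac{1}{2\pi i}[b_0,c_k]=-(b[0]c)_k$, which is the mode-level form of the commutator formula $[b_0,Y(z^{L_0}c,z)]=2\pi i\,Y(z^{L_0}b[0]c,z)$ that the paper cites from equation (2.24) of \cite{Gaberdiel:2012yb}. The only difference is that you derive this key identity from the standard commutator formula together with $c(h_b,i,0)=\binom{h_b-1}{i}$ rather than citing it, which makes the argument more self-contained but does not change its substance.
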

\begin{proof}We prove the lemma by inducting on $s$. The lemma is trivially true in the case that $s=0$, i.e. in the case that $\vec{s}=\emptyset.$ Suppose it is true for $\vec{s}$ such that $s=n\in \mathbb{N}.$ Then applying Equation (2.24) in \cite{Gaberdiel:2012yb} followed by the induction hypothesis, we have
\begin{multline*}
-\frac{1}{2\pi i}[b_0^{s_0}, Y(z^{L_0}d^{\vec{s}}(a),z)]=-\frac{1}{2\pi i}(2\pi i)Y(z^{L_0}b^{s_0}[0]d^{\vec{s}}(a), z)\\=-Y(z^{L_0}(-1)^{n} b^{s_0}[0]\prod_{s_i\in {\vec{s}}}^\rightarrow b^{s_i}[0] a,z)=(-1)^{n+1}Y(z^{L_0}\prod_{s_i\in s_0\cup {\vec{s}}}^\rightarrow b^{s_i}[0] a, z).
\end{multline*}
\end{proof}
We have 
\be
d^{\vec s}(d^{ \vec t }(a))= d^{\vec s \cup \vec t}(a) \ ,
\ee
where $\cup$ denotes the concatanation of vectors.

\begin{lem}\label{lem:comm}
    \be\label{comm}
[a_k, b_0^{\vec s}] = \sum_{\emptyset \neq \vec t\subset \vec s} (2\pi i)^{ t} b_0^{\vec s-\vec t} d_k^{\vec t}(a)
\ee
where the sum is over all non-empty subtuples ${\vec{t}}$ of ${\vec{s}}$.
\end{lem}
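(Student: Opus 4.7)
The plan is to prove the identity by induction on the length $s$ of the index tuple $\vec s$. For $s=0$ both sides vanish: the left side is $[a_k, 1]=0$ and the sum on the right is empty. For the inductive step with $\vec s = (s_1, s_2, \ldots, s_{s+1})$, I peel off the entry $s_1$, which according to the convention $b_0^{(s_1,\ldots,s_s)} = b_0^{s_s}\cdots b_0^{s_1}$ sits at the far right. Setting $\vec s'' = (s_2,\ldots,s_{s+1})$ gives $b_0^{\vec s} = b_0^{\vec s''}\, b_0^{s_1}$, so the Leibniz rule yields
\[
[a_k, b_0^{\vec s}] \;=\; [a_k, b_0^{\vec s''}]\, b_0^{s_1} \;+\; b_0^{\vec s''}\, [a_k, b_0^{s_1}].
\]

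The second summand equals $(2\pi i)\, b_0^{\vec s - (s_1)}\, d_k^{(s_1)}(a)$ directly from the definition $d_k^{(s_1)}(a) = -(2\pi i)^{-1}[b_0^{s_1}, a_k]$; this accounts for the subtuple $\vec t = (s_1)$ in the claimed sum. For the first summand I apply the induction hypothesis to $[a_k, b_0^{\vec s''}]$, getting a sum over non-empty subtuples $\vec t \subset \vec s''$, and then commute $b_0^{s_1}$ past each $d_k^{\vec t}(a)$ via
\[
d_k^{\vec t}(a)\, b_0^{s_1} \;=\; b_0^{s_1}\, d_k^{\vec t}(a) \;+\; (2\pi i)\, d_k^{s_1 \cup \vec t}(a),
\]
which is just the recursive definition of $d_k^{s_1 \cup \vec t}(a)$. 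This splits each inductive contribution into a term $(2\pi i)^{|\vec t|}\, b_0^{\vec s - \vec t}\, d_k^{\vec t}(a)$, where $b_0^{s_1}$ has been absorbed using the ordering identity $b_0^{\vec s'' - \vec t}\, b_0^{s_1} = b_0^{\vec s - \vec t}$, and a term $(2\pi i)^{|\vec t|+1}\, b_0^{\vec s -(s_1 \cup \vec t)}\, d_k^{s_1 \cup \vec t}(a)$.

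To close, I note that the three classes of terms produced — non-empty subtuples of $\vec s''$ (subtuples of $\vec s$ not containing $s_1$), tuples of the form $s_1 \cup \vec t$ with $\vec t$ a non-empty subtuple of $\vec s''$ (subtuples containing $s_1$ together with at least one other index), and the singleton $(s_1)$ — together partition the set of non-empty subtuples of $\vec s$. Each carries the correct power $(2\pi i)^{|\vec t|}$, so the right-hand side reassembles into $\sum_{\emptyset \neq \vec t \subset \vec s} (2\pi i)^{|\vec t|}\, b_0^{\vec s - \vec t}\, d_k^{\vec t}(a)$, completing the induction. The main obstacle is purely combinatorial bookkeeping: carefully tracking the reversed-product convention in $b_0^{\vec s}$ to verify $b_0^{\vec s'' - \vec t}\, b_0^{s_1} = b_0^{\vec s - \vec t}$, and checking that the concatenation $s_1 \cup \vec t$ lines up exactly with the recursive definition of $d_k^{s_1 \cup \vec t}(a)$ so that the two sources of commutator terms together with the base term produce the full partition described above.
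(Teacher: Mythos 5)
Your proof is correct and follows essentially the same route as the paper: induction on the length of $\vec s$, peeling off the first index (which sits at the far right of the operator product under the reversed convention), applying the Leibniz rule, and then commuting the peeled-off zero mode past each $d_k^{\vec t}(a)$ to generate the subtuples containing that index. The bookkeeping of the three classes of subtuples and the powers of $2\pi i$ matches the paper's argument exactly.
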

\begin{proof}
We induct on the length $s$ of $\vec s$. If $\vec s=(s_1)$ i.e. if $s=1$, we have
\be
[a_k,b_0^{\vec s}]=[d_k^\emptyset(a),b_0^{s_1}]=-[b_0^{s_1},d_k^\emptyset(a)]=2\pi i d_k^{\vec s}(a),
\ee which proves the base case.

Suppose Equation \eqref{comm} holds for ${s}=n$ where $n\ge 1.$ Then
\begin{eqnarray*}[a_k, b_0^{s_0\cup\vec{s}}]&=&[a_k, b_0^{\vec{s}}b_0^{s_0}]=[a_k, b_0^{\vec s}]b_0^{s_{0}}+b_0^{\vec s}[a_k, b_0^{s_{0}}]\\&=&\sum_{\emptyset\ne \vec{t}\subset \vec{s}}(2\pi i )^{t}b_0^{\vec{s}-\vec{t}}d_k^{\vec{t}}(a)b_0^{s_{0}}+b_0^{\vec s} 2\pi i d_k^{(s_{0})}(a)\\
&=&\sum_{\emptyset\ne{{\vec t}}\subset {\vec s}}(2\pi i)^{t}b_0^{{\vec s}-{\vec t}}([d_k^{\vec t}(a),b_0^{s_{0}}]+b_0^{s_{0}}d_k^{{\vec t}}(a))+2\pi i b_0^{{\vec s}}d_k^{(s_{0})}(a)\\&=&\sum_{\emptyset\ne {{\vec t}}\subset {\vec s}}(2\pi i)^{t}b_0^{{\vec s}-{\vec t}}(2\pi i d_k^{s_0\cup{{\vec t}}}(a)+b_0^{s_{0}}d_k^{\vec{t}}(a))+2\pi i b_0^{\vec{s}}d_k^{(s_{0})}(a)\\&{=}&\sum_{\emptyset\ne {{\vec t}}\subset {\vec s}}(2\pi i)^{t+1}b_0^{{\vec s}-{\vec t}}d_k^{s_0\cup{\vec t}}(a)+\sum_{\emptyset\ne {\vec t}\subset {\vec s}}(2\pi i)^{t}b_0^{s_0\cup{\vec s}-{\vec t}}d_k^{{\vec t}}(a)+2\pi ib_0^{\vec s}d_k^{(s_{0})}(a)\\&=&\sum_{{\vec t}\subset s_0\cup {\vec s}, s_{0}\in {\vec t}, {\vec{t}}\ne \{s_0\}}(2\pi i)^{t}b_0^{s_0\cup {\vec s}-{\vec t}}d_k^{{\vec t}}(a)+\sum_{\emptyset\ne {\vec t}\subset s_0\cup {\vec s},s_{0}\notin {\vec t}}(2\pi i)^{t}b_0^{s_0\cup {\vec s}-{\vec t}}d_k^{\vec t}(a)\\&&+2\pi ib_0^{{\vec s}}d_k^{(s_{0})}(a)\\&=&\sum_{\emptyset\ne {\vec t}\subset s_0\cup {\vec s}}(2\pi i)^{t}b_0^{s_0\cup {\vec s}-{\vec t}}d_k^{\vec t}(a).
\end{eqnarray*}
\end{proof}

For our main proposition let us introduce two definitions. For a vector $\vec{u}=(u_1, \cdots, u_t)$ we denote by $des(\vec u)$ the number of \emph{descents} of $\vec u$, that is the number of $j$, $1\le j\le t-1$,  such that $u_{j+1}<u_j$. Moreover the {\emph{Stirling numbers of the first kind}} $s(n,k)$ are defined as the coefficients in the expansion
\be (x)_n=x(x-1)\cdots (x-n+1)=\sum_{k=0}^ns(n,k)x^k\ .
\ee 
More details on these numbers and related combinatorial concepts can be found in Appendix \ref{Stirling_appendix}.
We are now ready to state our main recursion relation:
\begin{prop}\label{prop:rec}
Take $a^i$ and $b^j$ to be states, not necessarily primary. Then
\begin{multline*}
F(b_0^{(1,2,\cdots, r)}; (a^1, \zeta_1), \cdots, (a^n, \zeta_n); \tau)=F(b_0^{(1,2,\cdots, r)}a_0^1; (a^2, \zeta_2), \cdots, (a^n, \zeta_n);\tau)\\+\sum_{j=2}^n\sum_{\vec{s}\subsetneq (1,2,\cdots, r)}\sum_{\vec u}\sum_{i=0}^{u-des({\vec u})-1}\binom{u-des(\vec{u})-1}{i}\frac{1}{(i+des(\vec{u})+1)!}\sum_{t=1}^{i+des(\vec{u})+1}(2\pi i)^{u-t}s(i+des(\vec{u})+1, t)\\
\times\sum_{m=0}^\infty g_{m+1}^t(\frac{\zeta_j}{\zeta_1}, q)F(b_0^{\vec{s}}; (a^2, \zeta_2), \cdots, (d^{\vec{u}}(a^1)[m]a^j, \zeta_j), \cdots, (a^n, \zeta_n); \tau)\\
+\sum_{j=2}^n\sum_{m=0}^\infty g_{m+1}^0(\frac{\zeta_j}{\zeta_1}, q)F(b_0^{(1,2,\cdots,r)};(a^2, \zeta_2), \cdots, (a^1[m]a^j, \zeta_j),\cdots, (a^n, \zeta_n);\tau)
\end{multline*}
where the $\vec s$ sum is over all proper subtuples $\vec{s}$ of $(1,2,\cdots, r)$, including $\emptyset.$ The $\vec u$ sum is over all permutations $\vec{u}$ of $(1,2,\cdots, r)-{\vec{s}}$. $u$ denotes the length of $\vec{u}$, $des(\vec{u})$ denotes the number of descents of $\vec{u},$ and the $s(i+des(\vec{u})+1,t)$ are Stirling numbers of the first kind.
\end{prop}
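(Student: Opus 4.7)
The plan is to reduce the statement to a single master identity obtained from trace cyclicity, and then identify the three pieces of that identity with the three groups of terms on the right-hand side. I would begin by writing $Y(\zeta_1^{L_0}a^1,\zeta_1)=\sum_{n\in\Z}a^1_n\zeta_1^{-n}$ in physics modes and working mode-by-mode. The $n=0$ mode contributes directly to the first term $F(b_0^{(1,\ldots,r)}a^1_0;(a^2,\zeta_2),\ldots;\tau)$, since $a^1_0$ is sitting immediately to the right of $b_0^{(1,\ldots,r)}$ and can be absorbed into the zero-mode product. For $n\neq 0$ I would shuttle $a^1_n$ to the right past $Z:=Y(\zeta_2^{L_0}a^2,\zeta_2)\cdots Y(\zeta_n^{L_0}a^n,\zeta_n)$, then use cyclicity of the trace together with $q^{L_0}a^1_n=q^n a^1_n q^{L_0}$, and finally commute $a^1_n$ back past $b_0^{(1,\ldots,r)}$. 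Rearranging yields the master identity
\begin{multline*}
(1-q^n)\Tr\bigl(b_0^{(1,\ldots,r)}a^1_n Z q^{L_0}\bigr)=\Tr\bigl(b_0^{(1,\ldots,r)}[a^1_n,Z]q^{L_0}\bigr)+q^n\Tr\bigl([a^1_n,b_0^{(1,\ldots,r)}]Z q^{L_0}\bigr).
\end{multline*}
Multiplying by $\zeta_1^{-n}/(1-q^n)$ and summing over $n\neq 0$ produces two separate sums to analyze.

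For the $[a^1_n,Z]$ piece, I would apply the standard VOA commutator formula which, after straightforward bookkeeping in the physics modes, becomes $[a^1_n,Y(\zeta_j^{L_0}a^j,\zeta_j)]=\sum_{k\geq 0}\binom{n+h_{a^1}-1}{k}\zeta_j^n\,Y(\zeta_j^{L_0}a^1(k)a^j,\zeta_j)$. Summing against $(\zeta_j/\zeta_1)^n/(1-q^n)$ is exactly the $i=0$ instance of equation (B.21) of \cite{Gaberdiel:2012yb}, which converts the physics modes $a^1(k)$ into torus modes and yields $\sum_{m\geq 0}g^0_{m+1}(\zeta_j/\zeta_1,q)\,Y(\zeta_j^{L_0}a^1[m]a^j,\zeta_j)$. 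Taking the trace reproduces the third line of the proposition verbatim.

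For the $[a^1_n,b_0^{(1,\ldots,r)}]$ piece, Lemma~\ref{lem:comm} produces $\sum_{\emptyset\neq\vec t\subset(1,\ldots,r)}(2\pi i)^t b_0^{(1,\ldots,r)-\vec t}\,d^{\vec t}_n(a^1)$, where by Lemma~\ref{lem:dstate} each $d^{\vec t}_n(a^1)$ is itself a physics mode of a state in $V$. One cannot finish by a single application of the master identity, because the residual mode $d^{\vec t}_n(a^1)$ must still be shuttled through the remaining $b_0^{(1,\ldots,r)-\vec t}$ and through $Z$. Iterating the same cyclicity manoeuvre gives nested contributions: at each subsequent step a sub-subtuple of the remaining indices is peeled off by Lemma~\ref{lem:comm}, and a further factor of $q^n$ is accumulated from cycling past $q^{L_0}$. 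The total record of this process is precisely a permutation $\vec u$ of $(1,\ldots,r)-\vec s$ (the order in which successive $b_0^{u_i}$ are pulled out), the descent count $\mathrm{des}(\vec u)$ (measuring how many iterations occurred on the ``post-cyclicity'' side of the identity and thus contributed an extra $q^n$), and an integer $i$ (counting how many further $q^n$ factors arise). Combining all the cumulated $(q^n)^\bullet$ factors with the binomials $\binom{n+h-1}{k}$ in the commutator formula, and using $\partial_\tau^i(1-q^n)^{-1}$ to package the powers of $n q^n$, gives precisely the $g^i_{m+1}(\zeta_j/\zeta_1,q)$ functions for $i\geq 1$ after applying the full form of (B.21). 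The Stirling numbers $s(i+\mathrm{des}(\vec u)+1,t)$ arise when re-expressing polynomials in $n$ as falling factorials through Lemma~4.3.1 of \cite{Z} (i.e.\ $\binom{n+h-1}{i}=\sum_s c(h,i,s)n^s$) so that the physics modes $d^{\vec u}(a^1)_n$ can be converted into torus modes $d^{\vec u}(a^1)[m]$.

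The main obstacle will be Step~3: matching the output of the iterated commutator expansion—which naturally produces terms indexed by ordered set partitions of $(1,\ldots,r)\setminus\vec s$ together with multiplicities of $q^n$—to the compact closed form indexed by permutations weighted by descents and Stirling numbers of the first kind. I would attack this by induction on $r$, using the generating-function identities collected in Appendix~\ref{Stirling_appendix} to repackage the combinatorial sums, and verifying consistency with the known special cases $r=0$ (Zhu's original recursion in \cite{Z}) and $r=1$ (the single-zero-mode recursion (B.21) of \cite{Gaberdiel:2012yb}).
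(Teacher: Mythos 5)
Your proposal follows essentially the same route as the paper: your cyclicity ``master identity'' is the paper's equation \eqref{receq1}, the iterated peeling of subtuples of zero modes is formalized there as Lemma~\ref{genLem} with the coefficients $C_{\vec u}$ of Definition~\ref{Ct}, the descent/binomial repackaging is Proposition~\ref{Cprop}, and the conversion to the $g^t_{m+1}$ via (B.21) is identical, as is the concluding observation that $s(\cdot,0)=0$ starts the $t$-sum at $1$. The one slip is your attribution of the Stirling numbers of the first kind to Zhu's Lemma~4.3.1: in the paper they arise instead from Proposition~\ref{diffop}, which inverts the Stirling-number-of-the-second-kind expansion of $\partial_\tau^m(1-q^k)^{-1}$ in powers of $q^k/(1-q^k)$ --- a detail you would recover when executing the $\partial_\tau$-packaging step you already describe.
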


\begin{proof}
The proof is very similar to the proof of the recursion relation in \cite{Gaberdiel:2012yb}, the main difference being the combinatorics of the terms. The basic idea is to expand the first vertex operator as in (\ref{physmode}). For the $k$-th term we then use the following identity for $k\ne 0$:
\begin{multline}
\zeta_1^{-k}F(b_0^{(1,2,\cdots,r)}a_k^1; (a^2, \zeta_2), \cdots, (a^n, \zeta_n);\tau)=\\\sum_{j=2}^n\left(\frac{\zeta_j}{\zeta_1}\right)^k\sum_{\vec s\subset (1, 2,\cdots, r)}\sum_{\vec u}(2\pi i)^u\frac{1}{1-q^k}C_{\vec u}\sum_{m=0}^\infty\binom{h-1+k}{m}F(b_0^{\vec s};(a^2, \zeta_2), \cdots, (d^{\vec u}_{m-h+1}(a^1)a^j, \zeta_j),\cdots, (a^n, \zeta_n);\tau)\ ,
\end{multline}
where the $C_{\vec u}$ are defined in definition~\ref{Ct}.
This identity is the analogue of equation (B.16) in \cite{Gaberdiel:2012yb}, and we prove it in lemma \ref{genLem} in the appendix. Next we use the expression in proposition \ref{Cprop} for the $C_{\vec{u}}$ and apply proposition \ref{diffop}. For $k\ne 0$ this gives
\begin{multline*}\zeta_1^{-k}F(b_0^{(1,2,\cdots, r)}a_k^1; (a^2, \zeta_2), \cdots, (a^n, \zeta_n);\tau)\\
= \sum_{j=2}^n\left(\frac{\zeta_j}{\zeta_1}\right)^k\sum_{\vec{s}\subsetneq (1,2,\cdots, r)}\sum_{\vec{u}}\sum_{i=0}^{u-des(\vec{u})-1}\binom{u-des(\vec{u})-1}{i}\frac{1}{(i+des(\vec{u})+1)!}\\\times\sum_{t=0}^{i+des(\vec{u})+1}(2\pi i)^{u-t}k^{-t}s(i+des(\vec{u})+1, t)\partial_\tau^t(1-q^k)^{-1}\\\times\sum_{m=0}^\infty\binom{h-1+k}{m}F(b_0^{\vec{s}};(a^2, \zeta_2), \cdots, (d_{m-h+1}^{\vec{u}}(a^1)a^j, \zeta_j),\cdots, (a^n, \zeta_n);\tau)\\+\sum_{j=2}^n\left(\frac{\zeta_j}{\zeta_1}\right)^k\frac{1}{1-q^k}\sum_{m=0}^\infty\binom{h-1+k}{m}F(b_0^{(1,2,\cdots, r)};(a^2, \zeta_2),\cdots, (a^1_{m-h+1}a^j, \zeta_j), \cdots, (a^n, \zeta_n);\tau).
\end{multline*}
We use this expression to sum $\zeta_1^{-k}F(b_0^{(1,2,\cdots, r)}a_k^1; (a^2, \zeta_2), \cdots, (a^n, \zeta_n);\tau)$ over all $k\in \mathbb{Z}$ and then apply Equation \eqref{B.21}. This gives us the result, except with the sum over $t$ from $0$ to $i+des(\vec{u})+1$ instead of from $1$ to $i+des(\vec{u})+1$. The result then follows from observing that $s(i+des(\vec{u})+1, 0)=0$ for all $\vec{u}$.
\end{proof}
This leads to our main theorem for the structure of mixed correlators:
\begin{thm}\label{main}
Let $V$ be a holomorphic VOA of central charge $24p$ and $a^i$, $1\le i\le n,$ and $b^j$, $1\le j\le r$, be $L[0]$-homogeneous states. The mixed correlator $S(b_0^{(1,2,\cdots,r)};(a^1,z_1),\ldots (a^n,z_n);\tau)$ is a weakly holomorphic quasi-Jacobi form  with a pole of at most order $p$ at $\tau= i\infty$. 
\end{thm}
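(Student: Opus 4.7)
The plan is induction on the number $n$ of vertex operators, using Proposition~\ref{prop:rec} as the reduction engine together with the ring structure of weak quasi-Jacobi forms recalled in Section~2.

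For the inductive step ($n\geq 1$), apply Proposition~\ref{prop:rec} to the mixed correlator $S(b_0^{(1,\ldots,r)};(a^1,z_1),\ldots,(a^n,z_n);\tau)$. Every correlator appearing on the right-hand side has only $n-1$ vertex operators (the zero-mode count being either increased by one, unchanged, or strictly decreased); by the induction hypothesis each is a weakly holomorphic quasi-Jacobi form of index $0$ with pole of order at most $p$ at $q=0$. The coefficient functions $g_{m+1}^t(\zeta_j/\zeta_1,q)$ are quasi-Jacobi forms of index $0$ (by the results in Section~2), depend on the $z$-variables only through the differences $z_j-z_1$, and are holomorphic at $q=0$. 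Since the ring of weak quasi-Jacobi forms is closed under addition and multiplication with additive weights and indices (so that index $0$ is preserved), each summand and hence the whole right-hand side is a weakly holomorphic quasi-Jacobi form of index $0$. The pole order at $q=0$ is inherited because the $g_{m+1}^t$ are holomorphic there, and the inner sums over $m$ terminate since $a[m]b=0$ for $m$ sufficiently large.

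The base case is $n=0$. When $r=0$ the statement is Zhu's theorem applied to the partition function $\Tr q^{L_0-c/24}$. For $r\geq 1$, the pure zero-mode correlator $\Tr b_0^{(1,\ldots,r)}q^{L_0-c/24}$ is $z$-independent, and the task is to show it is a weakly holomorphic quasi-modular form. I would prove this by realizing it as the constant Laurent coefficient $[\prod_i\zeta_i^0]$ of the full correlator $F(\id;(b^1,\zeta_1),\ldots,(b^r,\zeta_r);\tau)$ expanded in the domain $|\zeta_1|\ll\cdots\ll|\zeta_r|$ compatible with the operator ordering in the trace. The full correlator is a weakly holomorphic Jacobi form of index $0$ by Zhu's theorem. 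Decomposing it into its $\underline z$-holomorphic part (whose value at $\underline z=0$ is a modular form) and its polar part (a sum of explicit $P_k$-type contributions whose Laurent constants are known quasi-modular objects), the extracted constant term is manifestly quasi-modular.

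The main obstacle is the weight and depth bookkeeping in the inductive step: the intricate interplay of the Stirling numbers $s(i+des(\vec u)+1,t)$, the binomials, the prefactors $(2\pi i)^{u-t}$, the weight $t+m+1$ of $g_{m+1}^t$, and the $L[0]$-weight of the composite state $d^{\vec u}(a^1)[m]a^j$ must conspire so that the right-hand side is a quasi-Jacobi form of a single homogeneous weight rather than a heterogeneous mixture. A secondary delicate point is the base case: one must verify that extracting the constant Laurent coefficient from a meromorphic index-$0$ Jacobi form, in the specified operator-ordering expansion domain, yields a genuine quasi-modular form with the correct modular anomaly.
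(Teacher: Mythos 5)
Your inductive step is sound and is essentially the same mechanism the paper uses: Proposition~\ref{prop:rec} reduces the number of vertex operators by one, the coefficient functions $g^t_{m+1}$ are quasi-Jacobi forms of index $0$ holomorphic at $q=0$, the $m$-sums terminate, and the ring structure of weak quasi-Jacobi forms does the rest. (One remark: the ``main obstacle'' you identify -- that the weights must conspire to give a single homogeneous weight -- is not actually required. Theorem~\ref{main} only asserts membership in the ring of weakly holomorphic quasi-Jacobi forms, and the paper explicitly notes right after the proof that the result is in general \emph{not} of homogeneous weight; homogeneity is only recovered under the extra HHA hypotheses of Proposition~\ref{prop:HHA}.)

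The genuine gap is your base case. Because you induct on $n$, you must establish the statement for $n=0$ and \emph{all} $r\ge 1$, i.e.\ that every pure zero-mode correlator $\Tr b_0^{(1,\ldots,r)}q^{L_0-c/24}$ is quasi-modular, and this is exactly the hard content of the theorem in that corner. Identifying it as the constant Laurent coefficient of the full $r$-point function in the domain $|q|<|\zeta_r|<\cdots<|\zeta_1|<1$ is correct, but the step ``decompose the meromorphic index-$0$ Jacobi form into a holomorphic part plus explicit $P_k$-type polar contributions, whose constant terms are known quasi-modular objects'' is asserted, not proved. For several variables with poles along all the divisors $z_i-z_j\in\Z\tau+\Z$, producing such a decomposition with controlled (quasi-modular) coefficients is essentially equivalent to iterating Zhu's recursion, and the coefficients that appear in that iteration are precisely traces with zero modes inserted -- the objects you are trying to control. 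So as written the base case either presupposes the theorem for smaller $r$ (an induction you have not set up) or leaves the key analytic claim unproven. The paper sidesteps this entirely by inducting on $r$ instead: the base case $r=0$ is Zhu's Theorem~\ref{thm:Zhu} verbatim, and the induction step reads Proposition~\ref{prop:rec} ``in reverse,'' solving for the first term $F(b_0^{\vec s}a^1_0;\ldots)$ to trade one zero mode for one extra vertex operator, so that every correlator on the right-hand side has at most $r$ zero modes. If you want to keep your induction on $n$, the cleanest fix is to prove the $n=0$ case by a separate induction on $r$ using exactly this reversed recursion -- at which point you have reproduced the paper's argument.
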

\begin{proof}
We use induction in the number $r$ of zero modes. The base case $r=0$ follows from Zhu's theorem --- see the comment below theorem~\ref{thm:Zhu}. For the induction step we use proposition~\ref{prop:rec}. This allows us to express the correlator  with $r+1$ zero modes in terms of correlators with $r$ zero modes or less. We then simply observe that the functions $g^i_k(z,\tau)$ that appear in the process are all quasi-Jacobi forms that have no pole at $\tau = i\infty$.
\end{proof}
We note that in general the resulting quasi-Jacobi form will not have homogeneous weight. We will discuss in section~\ref{s:sVOA} under what assumptions on the $a^i$ and $b^j$ we obtain a homogeneous form. For the moment we simply observe that the weights of the form are at most $\sum [h_{a^i}] +\sum [h_{b^j}]$. Here the maximal weight comes from the term $t=u$ in the sum. See the proof of proposition~\ref{prop:HHA} for a proof of that statement.

By observing that quasi-Jacobi forms that are independent of all elliptic variables $z_i$ are quasi-modular forms, this gives an immediate corollary for zero-mode correlators:
\begin{cor}
Zero mode correlators are quasi-modular forms.
\end{cor}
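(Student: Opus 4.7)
The plan is to derive the corollary as an immediate specialization of Theorem~\ref{main} to the case of no vertex operator insertions. A zero mode correlator is precisely the mixed correlator $S(b_{0}^{(1,\ldots,r)};(a^{1},z_{1}),\ldots,(a^{n},z_{n});\tau)$ taken at $n=0$; as such it is a function of $\tau$ alone, with no elliptic variables $z_{i}$ present. Theorem~\ref{main} already asserts that every such mixed correlator is a weakly holomorphic quasi-Jacobi form, so the remaining work is purely definitional.

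The step is then to verify that a weak quasi-Jacobi form on $\mathcal{H}\times\mathbb{C}^{0}=\mathcal{H}$ is the same data as a weakly holomorphic quasi-modular form. With $n=0$ in the definition, the symmetric index matrix $F$ is empty, the exponential index factor $\exp(-2\pi i c F[\underline{z}]/(c\tau+d))$ equals $1$, and the elliptic transformation law \eqref{quasielliptic} has no content. The modular transformation law \eqref{quasimodular} collapses to
\[
(c\tau+d)^{-k}\phi(\gamma\tau)=\sum_{j\leq t}S_{j}(\phi)(\tau)\left(\frac{c}{c\tau+d}\right)^{j},
\]
which is exactly the defining relation for a quasi-modular function of weight $k$ and depth at most $t$. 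The weakly holomorphic condition (a pole of order at most $p$ at the cusp $\tau=i\infty$) transfers unchanged, and the polynomial growth of the coefficients $S_{j}$ passes across the identification, yielding a weakly holomorphic quasi-modular form in the sense of the definition given in the text.

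The main obstacle here is essentially nonexistent: once Theorem~\ref{main} is in hand, the corollary is a one-line consequence of unpacking the $n=0$ case of the quasi-Jacobi definition. The only subtlety worth a brief remark is confirming that Theorem~\ref{main} genuinely covers the $n=0$ slice of its statement, in which no $a^{i}$ are present at all; this is implicit from the general form, with base case $r=0$ recovering Zhu's partition function result (a modular form of weight $0$), but making this explicit in the surrounding text would leave no ambiguity.
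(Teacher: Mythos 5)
Your proposal is correct and follows essentially the same route as the paper: the authors likewise obtain the corollary as an immediate consequence of Theorem~\ref{main}, observing that a quasi-Jacobi form independent of all elliptic variables $z_i$ is precisely a quasi-modular form. Your extra remark about unpacking the $n=0$ case of the definitions is just a more explicit version of that same observation.
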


\subsection{Commuting zero mode recursion relations}
In the previous section we gave a general structure theorem for mixed and zero-mode correlators of holomorphic VOAs.
Let us now discuss a special case for the recursion relation in proposition~\ref{prop:rec}: we assume that all zero modes $b_0^i$ commute.

Without loss of generality we can assume that all indices are distinct. We can therefore replace index tuples $\vec s$ by index sets $S$ and use the standard set notations $S-T$ and $T\cup T'$. The analogue of proposition~\ref{prop:rec} is then:

\begin{prop}\label{prop:reccomm}
Take states $a^i$ and $b^j$, not necessarily primary. Moreover assume that all $b_0^i$ commute. Then
\begin{multline}\label{reccomm}
F(b^{\{1,2,\ldots,r\}}_0;(a^1,\zeta_1),\ldots (a^n,\zeta_n);\tau)=F(b^{\{1,2,\ldots,r\}}_0a^1_0;(a^2,\zeta_2),\ldots (a^n,\zeta_n);\tau)
\\ + \sum_{j=2}^n \sum_{S\subset {\{1,2,\ldots,r\}}} \sum_{m\in \N} g^{|S|}_{m+1}(\zeta_j/\zeta_1,q) F(b_0^{{\{1,2,\ldots,r\}}-S};(a^2,\zeta_2), \ldots (d^S(a^1)[m]a^j,\zeta_j), \ldots (a^n,\zeta_n);\tau)
\end{multline}
where the middle sum is over all subsets $S$ of ${\{1,2,\ldots,r\}}$.
\end{prop}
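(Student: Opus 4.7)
My plan is to run the argument of proposition~\ref{prop:rec} essentially verbatim, using the commutativity of the $b_0^i$ to replace all tuple-indexed combinatorics by set-indexed combinatorics. The key preliminary observation is that when $[b_0^i,b_0^j]=0$ for all $i,j$, the state $d^{\vec t}(a)$ defined in lemma~\ref{lem:dstate} depends only on the underlying set $T=\{t_1,\ldots,t_t\}$, not on the order of $\vec t$. This follows by induction on $t$ from the Jacobi identity: swapping adjacent entries $t_i,t_{i+1}$ in $\vec t$ changes $d_k^{\vec t}(a)$ by a term proportional to $[[b_0^{t_i},b_0^{t_{i+1}}],X]$ for some operator $X$, which vanishes by the commutativity assumption. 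Writing $d^T(a)$ for the common value, lemma~\ref{lem:comm} then reads
\begin{equation*}
[a_k,b_0^S]=\sum_{\emptyset\ne T\subset S}(2\pi i)^{|T|}b_0^{S-T}d_k^T(a),
\end{equation*}
with the sum now ranging over subsets $T$ of the set $S$.

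With this in hand, I would expand the first vertex operator as $Y(\zeta_1^{L_0}a^1,\zeta_1)=\sum_{k\in\Z}a_k^1\zeta_1^{-k}$ inside the trace defining $F(b_0^S;(a^1,\zeta_1),\ldots;\tau)$. The $k=0$ piece yields the first term $F(b_0^S a_0^1;(a^2,\zeta_2),\ldots;\tau)$ on the right-hand side of \eqref{reccomm}, using that $a_0^1$ commutes with $L_0$ together with trace cyclicity. For $k\ne 0$, the strategy is to move $a_k^1$ through $b_0^S$ using the set-indexed commutator displayed above, then through the remaining vertex operators via the standard Zhu-type identities, and finally to cycle past $q^{L_0}$ to collect a factor $(1-q^k)^{-1}$. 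This produces a commuting analogue of lemma~\ref{genLem}, namely
\begin{equation*}
\zeta_1^{-k}F(b_0^S a_k^1;(a^2,\zeta_2),\ldots;\tau)=\sum_{j=2}^n\Big(\frac{\zeta_j}{\zeta_1}\Big)^k\sum_{T\subset S}(2\pi i)^{|T|}\frac{1}{1-q^k}\sum_{m\ge 0}\binom{h-1+k}{m}F\bigl(b_0^{S-T};\ldots,(d_{m-h+1}^T(a^1)a^j,\zeta_j),\ldots;\tau\bigr),
\end{equation*}
where $h=h_{a^1}$. Crucially, because the commutator of step~1 is already set-indexed, no Stirling numbers, no permutation sums, and no $\partial_\tau$ derivatives of $(1-q^k)^{-1}$ appear at this stage.

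Summing this identity over $k\in\Z\setminus\{0\}$ and applying identity \eqref{B.21} with $i=|T|$ converts the $k$-sum into $g_{m+1}^{|T|}(\zeta_j/\zeta_1,q)$, yielding exactly the right-hand side of \eqref{reccomm} after the evident relabeling. The main technical obstacle I anticipate is step~3 --- carefully redoing the contour and cyclicity manipulations of \cite{Gaberdiel:2012yb} in the presence of the product $b_0^S$, and checking that no additional derivative terms are generated by the successive commutations. Because the $b_0^i$ commute with one another and with $L_0$, this should be essentially mechanical, but it is the only step that requires more than a routine translation from tuples to sets.
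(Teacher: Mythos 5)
Your preliminary observations are fine: with $[b_0^i,b_0^j]=0$ the Jacobi identity does make the modes $d_k^{\vec t}(a)$, hence the state $d^{\vec t}(a)$, depend only on the underlying set, and lemma~\ref{lem:comm} then becomes set-indexed because each subset of $\vec s$ underlies exactly one order-preserving subtuple. The gap is in your claimed commuting analogue of lemma~\ref{genLem}, specifically the assertion that ``no $\partial_\tau$ derivatives of $(1-q^k)^{-1}$ appear at this stage.'' They must appear. When you push $a_k^1$ through the vertex operators, cycle it around the trace and commute it back through $b_0^S$, the commutator term $[a_k^1,b_0^S]$ reinserts the \emph{nonzero} modes $d_k^T(a^1)$ into the trace; these are not zero modes, so the manipulation does not terminate in one pass, and each further pass (processing $F(b_0^{S-T}d_k^T(a^1);\ldots)$ by the same trick) contributes an extra factor $q^k/(1-q^k)$. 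Already for $|S|=1$ one application of the identity \eqref{receq1} followed by one more pass gives the $T=S$ term the coefficient $(2\pi i)\,q^k/(1-q^k)^2=(k)^{-1}\partial_\tau(1-q^k)^{-1}$, not $(2\pi i)(1-q^k)^{-1}$ as in your display. In general the correct kernel for the $T$-term is $k^{-|T|}\partial_\tau^{|T|}(1-q^k)^{-1}$ (equivalently $(1-q^k)^{-1}$ times a degree-$|T|$ polynomial in $q^k/(1-q^k)$, which is where the partition combinatorics of appendix~B reenters via proposition~\ref{diffop}). This matters because \eqref{B.21} with $i=|T|$ applies precisely to the kernel $k^{-|T|}\partial_\tau^{|T|}(1-q^k)^{-1}$; with your stated factor $(2\pi i)^{|T|}(1-q^k)^{-1}$ you would be forced to use \eqref{B.21} with $i=0$ and would end up with $\sum_T(2\pi i)^{|T|}g^0_{m+1}$ instead of $g^{|S|}_{m+1}$, contradicting both \eqref{reccomm} and the known special case \eqref{Gaberdiel_recursion}.

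So the route can in principle be made to work, but only by keeping track of the multiple passes through the trace, i.e.\ by summing over the ways the set $T$ is extracted in successive passes; resumming those $(q^k/(1-q^k))^l$ contributions into $k^{-|T|}\partial_\tau^{|T|}(1-q^k)^{-1}$ is exactly the Stirling-number bookkeeping (propositions~\ref{A-Stirling_2} and~\ref{diffop}) you were hoping to bypass. The paper instead specializes the general recursion of proposition~\ref{prop:rec} and collapses the permutation, Eulerian and Stirling sums with the combinatorial identity of proposition~\ref{identity_comm}; your set-indexed simplifications are compatible with that, but as written your intermediate identity is false and the final step does not go through.
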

\begin{proof}
We apply proposition \ref{prop:rec}, taking the $b_0^i$s to commute. In particular, we replace ordered tuples with sets. Let $R=\{1, \cdots, r\}$. Because the zero modes commute, we can simply replace a permutation $\vec u$ by 
the set $R-S$. The only thing to keep track of is how many descents $des(\vec u)$ the permutation introduced. We denote that number by $D$, where $D$ of course can take values from 0 to $|R|-|S|-1$. The number of permutations of the set $R-S$ that have $D$ descents is then given by the Eulerian number $A(|R|-|S|,D)$ --- see appendix~\ref{Stirling_appendix} for more details.
Proposition \ref{prop:rec} thus becomes
\begin{multline*}F(b_0^{R}; (a^1, \zeta_1), \cdots, (a^n, \zeta_n); \tau)=F(b_0^{R}a_0^1; (a^2, \zeta_2), \cdots, (a^n, \zeta_n);\tau)\\+\sum_{j=2}^n\sum_{S\subsetneq R}\sum_{D=0}^{|R|-|S|-1}A(|R|-|S|,D)\sum_{i=0}^{|R|-|S|-D-1}\binom{|R|-|S|-D-1}{i}\frac{1}{(i+D+1)!}\sum_{t=1}^{i+D+1}(2\pi i)^{|R|-|S|-t}s(i+D+1, t)\\\times\sum_{m\in \N} g_{m+1}^t(\frac{\zeta_j}{\zeta_1}, q)F(b_0^S; (a^2, z_2), \cdots, (d^{R-S}(a^1)[m]a^j, z_j), \cdots, (a^n, \zeta_n); \tau)\\+\sum_{j=2}^n\sum_{m\in \N} g_{m+1}^0(\frac{\zeta_j}{\zeta_1}, q)F(b_0^{R};(a^2, \zeta_2), \cdots, (a^1[m]a^j, \zeta_j),\cdots, (a^n, \zeta_n);\tau).
\end{multline*} 
Let us now reorganize the sums in the second line of the above equation. Because $s(i+D+1,t)=0$ if $t>i+D+1$, we may replace the sum over $t$ with a sum ranging from $t=0$ to $t=|R|-|S|$ and then pull the summation all the way to the left. Also, $s(i+D+1,t)=0$ if $i+D+1<t,$ and the binomial coefficient $\binom{|R|-|S|-D-1}{i}= 0$ if $i<0$, so the sum over $i$ can be replaced by a sum ranging from $i=t-D-1$ to $i=|R|-|S|-D-1$. We therefore have that the second term on the right hand side of the above expression is equal to 
\begin{multline*}
\sum_{j=2}^n\sum_{S\subsetneq R}\sum_{t=0}^{|R|-|S|}(2\pi i)^{|R|-|S|-t}\left(
\sum_{D=0}^{|R|-|S|-1}A(|R|-|S|,D)\sum_{i=t-D-1}^{|R|-|S|-D-1}\binom{|R|-|S|-D-1}{i}\frac{1}{(i+D+1)!}s(i+D+1, t)\right)\\
\times\sum_{m\in \N} g_{m+1}^t(\frac{\zeta_j}{\zeta_1}, q)F(b_0^S; (a^2, \zeta_2), \cdots, (d^{R-S}(a^1)[m]a^j, \zeta_j), \cdots, (a^n, \zeta_n); \tau).
\end{multline*}
Proposition \ref{identity_comm} then tells us that the sum in the large parentheses is actually equal to the Kronecker delta $\delta_{|R|-|S|,t}$. This allows us to evaluate the sum over $t$ to obtain
\begin{multline*}
F(b_0^{R}a_0^1; (a^2, \zeta_2), \cdots, (a^n, \zeta_n);\tau)\\+\sum_{j=2}^n\sum_{S\subsetneq R}\sum_{m\in \N} g_{m+1}^{|R|-|S|}(\frac{\zeta_j}{\zeta_1}, q)F(b_0^S; (a^2, z_2), \cdots, (d^{R-S}(a^1)[m]a^j, z_j), \cdots, (a^n, \zeta_n); \tau)\\+\sum_{j=2}^n\sum_{m\in \N} g_{m+1}^0(\frac{\zeta_j}{\zeta_1}, q)F(b_0^{R};(a^2, \zeta_2), \cdots, (a^1[m]a^j, \zeta_j),\cdots, (a^n, \zeta_n);\tau).
\end{multline*}
Combining the second two terms on the right hand side, and then interchanging the roles of $R-S$ and $S$ as they both range over all subsets of $R$, proves the result. 
\end{proof}

As an immediate corollary to proposition \ref{prop:reccomm} we obtain the recursion relation given in equation (2.27) in \cite{Gaberdiel:2012yb}:
Take $a^i$ and $b$ to be states, not necessarily primary. Then
\begin{multline}\label{Gaberdiel_recursion}
F((b_0)^r;(a^1, \zeta_1),\cdots, (a^n, \zeta_n);\tau)=F((b_0)^ra_0^1; (a^2, \zeta_2), \cdots, (a^n, \zeta_n);\tau)\\
+\sum_{s=0}^r\sum_{j=2}^n\sum_{m\in \mathbb{N}}\binom{r}{s}g_{m+1}^s\left(\frac{\zeta_j}{\zeta_1},q\right)F((b_0)^{r-s};(a^2, \zeta_2),\cdots, (d^{s}[m]a^j,\zeta_j),\cdots,(a^n,\zeta_n);\tau),
\end{multline}
where $(b_0)^r$ here denotes the $r$th power of $b_0$ and $d^{s}[m]$ is the $m$th square bracket mode of $d^{s}:=~(-1)^s(b[0])^sa^1$. This follows from the fact that there are $\binom{r}{s}$ subsets of length $s$ in $R$.

\section{Heavy Heisenberg Algebras}\label{s:sVOA}
By the results of \cite{MR1738180}, the generating function of the zero mode correlators of a Heisenberg field $a$ is a Jacobi form. We want to investigate if this type of result can be generalized to higher weight fields. In a first step, our goal is to mimic the pertinent properties of a Heisenberg algebra to higher weight fields. This leads us to the following definition:

\begin{defn}
Let $V$ be a holomorphic VOA and $A = \{a^1,a^2,\ldots a^N\}\subset V$ be a finite set of states homogeneous with respect to the $L[0]$ grading, not necessarily primary. Let $H_A:=span( L[-1]^k a^j: k\geq 0, j=1,\ldots N )$. We say that $A$ generates a \emph{heavy Heisenberg algebra} (HHA) $H_A$ if the zero modes of all states in $H_A$ commute,
\be\label{HHAcomm}
[b_0,c_0]= 0 \qquad \forall b,c \in H_A\ ,
\ee
and $H_A$ is closed under all square mode multiplications with $m\geq 0$, that is
\be\label{HHAact}
c[m]b \in H_A \qquad \forall m \geq 0,\ c, b \in H_A\ .
\ee
\end{defn}
By a slight abuse of terminology we will often refer to $A$ itself as a heavy Heisenberg algebra.

The idea here is that the first condition mimics the commutation condition for a Heisenberg algebra of multiple fields. The second condition mimics the closure condition of the Heisenberg algebra, that is the condition that no new fields other than the vacuum appear in the commutator of its modes. 

Let us now derive some results for zero mode correlators of a HHA and show that they are indeed of the desired form. We introduce the following lemma, which can be useful in computations:
\begin{lem}\label{lem:Lm1rels}
\be
a[m](L[-1])^n b = \sum_{k=0}^n \binom{m}{k} k!\binom{n}{k}L[-1]^{n-k} a[m-k] b 
\ee
\end{lem}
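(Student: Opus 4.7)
The plan is to prove this identity by induction on $n$, using the basic commutation relation between $L[-1]$ and a square-bracket mode. Recall that in the square-bracket formalism, $L[-1]$ acts as a translation derivation on the torus vertex operators, so $[L[-1],Y[a,z]] = \partial_z Y[a,z]$. Extracting coefficients from $\sum_m [L[-1],a[m]]z^{-m-1} = \sum_m (-m)a[m-1]z^{-m-1}$, this gives the identity
\[
[L[-1], a[m]] = -m\, a[m-1], \qquad \text{equivalently}\qquad a[m]L[-1] = L[-1]a[m] + m\, a[m-1].
\]
I will treat this commutator as a ``black box'' input and push $a[m]$ through the powers of $L[-1]$ one step at a time.

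For the base case $n=0$ both sides equal $a[m]b$. For the inductive step, assume the formula for $n$. Then using the displayed commutation relation,
\[
a[m]L[-1]^{n+1}b = L[-1]\bigl(a[m]L[-1]^n b\bigr) + m\, a[m-1]L[-1]^n b,
\]
and the inductive hypothesis applied to each term produces
\[
\sum_{k=0}^n \binom{m}{k}k!\binom{n}{k} L[-1]^{n+1-k}a[m-k]b \;+\; m\sum_{k=0}^n \binom{m-1}{k}k!\binom{n}{k} L[-1]^{n-k}a[m-1-k]b.
\]
After shifting $k\mapsto k-1$ in the second sum and using the elementary identity $m\binom{m-1}{k-1}(k-1)! = \binom{m}{k}k!$, both sums become multiples of $\binom{m}{k}k!\,L[-1]^{n+1-k}a[m-k]b$ with coefficients $\binom{n}{k}$ and $\binom{n}{k-1}$ respectively. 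Pascal's identity $\binom{n}{k}+\binom{n}{k-1}=\binom{n+1}{k}$ then assembles these into the claimed expression. The boundary terms $k=0$ and $k=n+1$ should be checked separately, but they fall out immediately (the $k=n+1$ case uses $m\binom{m-1}{n}=(n+1)\binom{m}{n+1}$).

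There is no substantive obstacle here; the only place where one has to be mildly careful is tracking the combinatorial identity $m\binom{m-1}{k-1}(k-1)! = \binom{m}{k}k!$, which is what converts the extra factor $m$ arising from the commutator into the binomial coefficient $\binom{m}{k}$ appearing in the target formula. Everything else is straightforward bookkeeping via Pascal's rule.
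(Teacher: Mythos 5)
Your proof is correct and follows essentially the same route as the paper: induction on $n$ using the commutator $[L[-1],a[m]]=-m\,a[m-1]$, the index shift $k\mapsto k-1$ together with $m\binom{m-1}{k-1}(k-1)!=\binom{m}{k}k!$, and Pascal's rule to assemble $\binom{n+1}{k}$. No issues.
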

\begin{proof}
Induction in $n$. Base case $n=0$ clear.
\begin{multline}
a[m](L[-1])^{n+1}b= L[-1]a[m]L[-1]^nb + ma[m-1]L[-1]^nb\\
= \sum_{k=0}^n \binom{m}{k} k!\binom{n}{k}L[-1]^{n+1-k} a[m-k] b +
\sum_{k=0}^n m \binom{m-1}{k} k!\binom{n}{k}L[-1]^{n-k} a[m-1-k] b \\
= \sum_{k=0}^n \binom{m}{k} k!\binom{n}{k}L[-1]^{n+1-k} a[m-k] b +
\sum_{k=1}^{n+1} m \binom{m-1}{k-1} (k-1)!\binom{n}{k-1}L[-1]^{n+1-k} a[m-k] b \\
= \sum_{k=0}^n \binom{m}{k} k!\binom{n}{k}L[-1]^{n+1-k} a[m-k] b +
\sum_{k=1}^{n+1}  \binom{m}{k} k!\binom{n}{k-1}L[-1]^{n+1-k} a[m-k] b \\
= \sum_{k=0}^{n+1} \binom{m}{k} k!\binom{n+1}{k}L[-1]^{n+1-k} a[m-k] b,
\end{multline}
where in the last line we used the Pascal triangle recurrence relation for binomial coefficients.
\end{proof}
We use this to prove the following proposition:
\begin{prop}\label{prop:HHAgen}
Assume $A=\{a^1,\ldots, a^N\}\subset V$ satisfies
\be\label{CSAcomm}
[a^i_0,a^j_0]= 0 \qquad \forall i,j
\ee
and for all $m\geq 0$,
\be\label{CSAact}
a^i[m]a^j = \sum_{l=1}^N d_{ijl} (L[-1])^{k_{ijl}} a^l
\ee
for some constants $d_{ijl}$ and $k_{ijl}$, where the $k_{ijl}$ are completely fixed by homogeneity. Then $A$ generates a heavy Heisenberg algebra.
\end{prop}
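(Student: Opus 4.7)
The plan is to verify the two defining conditions of a heavy Heisenberg algebra for the span $H_A$: commutativity of all physics zero modes, namely (\ref{HHAcomm}), and closure under the operations $c[m]b$ for $m\geq 0$, namely (\ref{HHAact}). In both parts the strategy is to propagate the corresponding conditions on the generators $a^i$ through the descent operator $L[-1]$.

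For commutativity I would exploit the operator identity $o(L[-1]b)=0$ on $V$, valid for every state $b$. One way to see this is the standard VOA relation $L[-1]=2\pi i(L_{-1}+L_0)$ together with the fact that on any $L_0$-homogeneous state $c$ of weight $h_c$ one has $o(L_{-1}c)=(L_{-1}c)(h_c)=-h_c\, c(h_c-1)=-h_c\, o(c)=-o(L_0 c)$; the general case follows by decomposing into $L_0$-homogeneous pieces. Once this is in hand, any element $b=\sum_{k,j}c_{k,j}L[-1]^k a^j\in H_A$ has physics zero mode $b_0=\sum_j c_{0,j}\, a_0^j$, i.e.\ only the non-descendant generators contribute. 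Since the $a_0^j$ pairwise commute by (\ref{CSAcomm}), so do the zero modes of all elements of $H_A$.

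For closure I reduce by linearity to showing $(L[-1]^K a^i)[m](L[-1]^L a^j)\in H_A$ for all $K,L\geq 0$ and $m\geq 0$. Two reductions do the job. First, from the defining property $Y[L[-1]c',z]=\partial_z Y[c',z]$ of $L[-1]$ one has $(L[-1]c')[m]=-m\,c'[m-1]$; iterating $K$ times gives $(L[-1]^K a^i)[m]=(-1)^K \frac{m!}{(m-K)!}\, a^i[m-K]$, which vanishes for $m<K$ and otherwise leaves a non-negative square-bracket index on $a^i$. Second, Lemma \ref{lem:Lm1rels} expands $a^i[m-K](L[-1])^L a^j$ as a sum of terms $L[-1]^{L-k}\bigl(a^i[m-K-k]\,a^j\bigr)$ with coefficients $\binom{m-K}{k}k!\binom{L}{k}$; the factor $\binom{m-K}{k}$ vanishes for $k>m-K$, so every surviving inner mode $a^i[m-K-k]$ has non-negative index. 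By hypothesis (\ref{CSAact}) each $a^i[m-K-k]\,a^j$ lies in $H_A$, and $H_A$ is manifestly preserved by $L[-1]$, so the whole expression lies in $H_A$.

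The real content of the argument, and its only non-bookkeeping ingredient, is the identity $o(L[-1]b)=0$: without it, $L[-1]$-descendants would contribute extra pieces to $b_0$ and the reduction to the commuting generators $a_0^j$ would break down. All remaining steps are combinatorial, with the only thing to watch being that the binomial coefficients appearing in Lemma \ref{lem:Lm1rels} and in the iterated application of $(L[-1]c')[m]=-m\,c'[m-1]$ truncate the relevant sums to arguments at which the hypotheses (\ref{CSAcomm}) and (\ref{CSAact}) apply.
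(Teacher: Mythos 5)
Your proof is correct and follows essentially the same route as the paper: both arguments rest on the vanishing of $o(L[-1]b)$ for the commutation condition, and on combining Lemma \ref{lem:Lm1rels} with the identity $(L[-1]^K a^i)[m]=(-1)^K m(m-1)\cdots(m-K+1)\,a^i[m-K]$ to reduce closure to hypothesis (\ref{CSAact}). The only difference is the order in which you apply the two reductions, which is immaterial.
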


\begin{proof}
First note that $L[-1]b$ has vanishing zero mode for any $b$, $(L[-1]b)_0=o(L[-1]b)=0$. The commutation condition (\ref{HHAcomm}) is thus automatically satisfied for any state $L[-1]^m a^i$. 

Next consider $c[l]b$ for $c=(L[-1])^la^i$ and $b=(L[-1])^n a^j$. First we can use lemma~\ref{lem:Lm1rels} to write
\be
c[m]b = \sum_{k=0}^{n} \binom{m}{k} k!\binom{n}{k}L[-1]^{n-k} c[m-k] a^j\ .
\ee
Note that by the definition of the binomial coefficients, the sum vanishes if $k>m$. It is thus a linear combination of terms of the form $L[-1]^r c[s]a^j$, $s\geq 0$. Next we use the fact that
\be
c[s]=(L[-1]^la^i)[s] = (-1)^l s(s-1)\ldots (s-l+1) a^i[s-l]\ .
\ee
Since this implies that terms with $l>s$ vanish, it follows that $c[m]b$ only contains terms of the form $L[-1]^r a^i[m]a^j$ with $m\geq 0$. (\ref{CSAact}) then implies that $c[m]b$ is indeed in $H_A$, satisfying (\ref{HHAact}).

\end{proof}

We will also need the following lemma, which is a straightforward generalization of proposition~4.3.1 in \cite{Z}:
\begin{lem}\label{lem:a0}
Assume that $a_0$ commutes with all $b_0^i$. Then
\be
\sum_{k=1}^n F(b_0^S;(a^1,\zeta_1),\ldots, (a[0]a^k,\zeta_k), \ldots, (a^n,\zeta_n)) = 0\ .
\ee
\end{lem}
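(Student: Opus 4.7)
The approach mimics Zhu's standard ``trace of a commutator vanishes'' argument (the proof of Proposition~4.3.1 in \cite{Z}), with the extra input that $a_0$ commutes with the inserted zero modes so that it can be freely moved past $b_0^S$.

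The plan is to apply cyclicity of the trace to the operator $X = b_0^S\, Y(\zeta_1^{L_0}a^1,\zeta_1)\cdots Y(\zeta_n^{L_0}a^n,\zeta_n)\, q^{L_0-c/24}$ together with $a_0$. Because $a_0$ has $L_0$-weight zero, it preserves the $L_0$-grading, so the trace can be computed level by level and cyclicity $\operatorname{Tr}(a_0 X) = \operatorname{Tr}(X a_0)$ is legitimate. First I would use the two hypotheses at our disposal: (i) $[a_0,b_0^i]=0$ for every $i$, so $a_0$ commutes with $b_0^S$; and (ii) $[a_0,L_0]=0$, so $a_0$ commutes with $q^{L_0-c/24}$. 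These let me rewrite
\begin{equation*}
\operatorname{Tr}(a_0 X) = \operatorname{Tr}\!\big(b_0^S\, a_0\, Y(\zeta_1^{L_0}a^1,\zeta_1)\cdots Y(\zeta_n^{L_0}a^n,\zeta_n)\, q^{L_0-c/24}\big),
\end{equation*}
\begin{equation*}
\operatorname{Tr}(X a_0) = \operatorname{Tr}\!\big(b_0^S\, Y(\zeta_1^{L_0}a^1,\zeta_1)\cdots Y(\zeta_n^{L_0}a^n,\zeta_n)\, a_0\, q^{L_0-c/24}\big).
\end{equation*}

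Next I would subtract these two expressions and apply the Leibniz rule, which produces a telescoping sum of single-position commutators:
\begin{equation*}
0 = \sum_{k=1}^n \operatorname{Tr}\!\Big(b_0^S\, Y(\zeta_1^{L_0}a^1,\zeta_1)\cdots \big[a_0,Y(\zeta_k^{L_0}a^k,\zeta_k)\big]\cdots Y(\zeta_n^{L_0}a^n,\zeta_n)\, q^{L_0-c/24}\Big).
\end{equation*}
To identify each summand with the correlator appearing in the lemma, I invoke the commutator identity already used in Lemma~\ref{lem:dstate} (equation~(2.24) of \cite{Gaberdiel:2012yb}),
\begin{equation*}
[a_0, Y(\zeta^{L_0}c,\zeta)] = 2\pi i\, Y(\zeta^{L_0}\, a[0]c,\zeta),
\end{equation*}
applied with $c=a^k$. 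After pulling out the overall factor of $2\pi i$, this yields exactly $\sum_{k=1}^n F(b_0^S;(a^1,\zeta_1),\ldots,(a[0]a^k,\zeta_k),\ldots,(a^n,\zeta_n);\tau)=0$.

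The only mildly delicate point is the cyclicity step, since the module is infinite-dimensional; but because $a_0$, $b_0^S$ and the $Y(\zeta_i^{L_0}a^i,\zeta_i)$ all have well-behaved $L_0$-grading properties, the trace splits as a sum over finite-dimensional weight spaces (graded by $L_0$) on which cyclicity is automatic, and the weighting by $q^{L_0-c/24}$ with $|q|<1$ provides absolute convergence. All other steps are purely formal manipulations with identities already established in the excerpt.
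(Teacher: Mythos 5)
Your proof is correct and is exactly what the paper has in mind: the lemma is stated there without proof as a ``straightforward generalization of Proposition~4.3.1 in Zhu,'' and your argument is precisely that generalization --- cyclicity of the graded trace, the hypotheses $[a_0,b_0^i]=0$ and $[a_0,L_0]=0$ to move $a_0$ past $b_0^S$ and $q^{L_0}$, and the commutator identity $[a_0,Y(\zeta^{L_0}c,\zeta)]=2\pi i\,Y(\zeta^{L_0}a[0]c,\zeta)$ already used in Lemma~\ref{lem:dstate}. No gaps; the overall $q^{-c/24}$ and the factor $2\pi i$ are harmless constants that do not affect the vanishing.
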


\begin{prop}\label{prop:HHA}
The mixed correlators of a HHA are quasi-Jacobi forms of index 0 and homogeneous weight. Their weight is given by the sum of the $L[0]$-gradings of all insertions.
\end{prop}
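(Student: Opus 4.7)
The plan is to induct on the number $r$ of zero mode insertions. The base case $r=0$ is immediate: by Theorem~\ref{thm:Zhu} (applied to holomorphic $V$, as in the comment below the theorem) any full correlator $S((a^1,z_1),\ldots,(a^n,z_n);\tau)$ with $a^i\in H_A$ is a weakly holomorphic Jacobi form of index $0$ and homogeneous weight $\sum_{i}[h_{a^i}]$, which is exactly the target.

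For the inductive step, assume the result holds for all mixed HHA correlators with at most $r$ zero modes, and let $F(b_0^{\tilde R};(a^2,\zeta_2),\ldots,(a^N,\zeta_N);\tau)$ be a target correlator with $|\tilde R|=r+1$. I would pick any element of $\tilde R$, relabel so that it is $1$, set $R=\tilde R\setminus\{1\}$ and $c=b^1$, and then apply Proposition~\ref{prop:reccomm}---valid since HHA zero modes commute---to the auxiliary correlator $F(b_0^R;(c,\zeta_1),(a^2,\zeta_2),\ldots,(a^N,\zeta_N);\tau)$, which has only $r$ zero modes. Using $b_0^R c_0=b_0^{\tilde R}$, the first term on the right of Proposition~\ref{prop:reccomm} is exactly the target, so the recursion rearranges to
\begin{equation*}
F(b_0^{\tilde R};(a^2,\ldots,a^N);\tau)=F(b_0^R;(c,\zeta_1),(a^2,\ldots,a^N);\tau)-\sum_{j'=2}^N\sum_{S\subset R}\sum_{m\ge 0}g^{|S|}_{m+1}\!\left(\tfrac{\zeta_{j'}}{\zeta_1},q\right)F\!\left(b_0^{R-S};\ldots,(d^S(c)[m]a^{j'},\zeta_{j'}),\ldots;\tau\right).
\end{equation*}

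It remains to verify three things. First, every correlator on the right-hand side is still a mixed HHA correlator: iterated application of the closure axiom~\eqref{HHAact} (as in Proposition~\ref{prop:HHAgen}) shows $d^S(c)=(-1)^{|S|}\prod b^{s_i}[0]c\in H_A$ and then $d^S(c)[m]a^{j'}\in H_A$ for all $m\ge 0$, while the remaining zero modes come from $R-S\subset\tilde R$. Second, since each such correlator has at most $r$ zero modes, the inductive hypothesis gives that it is a quasi-Jacobi form of index $0$ and homogeneous weight equal to the sum of $L[0]$-gradings of its insertions. Third, using that $g^{|S|}_{m+1}$ has weight $|S|+m+1$ and that $[h_{d^S(c)[m]a^{j'}}]=[h_{a^{j'}}]+[h_c]+\sum_{s\in S}[h_{b^s}]-|S|-m-1$, a direct weight count shows that the $|S|+m+1$ shifts cancel exactly, so every term on the right-hand side is homogeneous of the common weight $\sum_{i=2}^N[h_{a^i}]+\sum_{k\in\tilde R}[h_{b^k}]$. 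Hence the difference has the same weight, completing the induction. The degenerate case $N=1$ (pure zero mode correlators) needs no separate argument: the $j'$-sum is empty and the equation becomes an identity of index-$0$ quasi-modular forms of the same weight.

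The main obstacle is the weight bookkeeping: the remark after Theorem~\ref{main} warns that a generic mixed correlator is \emph{not} of homogeneous weight, because the sum over $t$ in Proposition~\ref{prop:rec} mixes contributions of several different weights via Stirling numbers and descent statistics. Both HHA axioms are tailored precisely to eliminate this---commutativity reduces Proposition~\ref{prop:rec} to the cleaner Proposition~\ref{prop:reccomm} with only the single exponent $|S|$ appearing in $g^{|S|}_{m+1}$, and closure guarantees that the inductive hypothesis can actually be applied to every summand on the right.
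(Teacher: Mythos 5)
Your overall strategy coincides with the paper's: induct on the number of zero modes, use Zhu's theorem for the base case, rearrange Proposition~\ref{prop:reccomm} to isolate the correlator with $r+1$ zero modes, check closure of the HHA, and count weights. The closure argument and the weight bookkeeping for generic terms are correct. However, there is a genuine gap in the step where you assert that ``$g^{|S|}_{m+1}$ has weight $|S|+m+1$'' for every term. This fails precisely for $S=\emptyset$, $m=0$: there the coefficient function is $g^0_1(\zeta_{j'}/\zeta_1,q)=P_1(\zeta_{j'}/\zeta_1,q)$, and $P_1=\tilde P_1-\pi i$ is \emph{not} homogeneous of weight $1$ --- only $\tilde P_1$ is a quasi-Jacobi form of weight $1$, while the constant $-\pi i$ contributes a piece of weight $0$ (this is exactly why the paper introduces $\tilde P_1$ as a separate object and lists $\tilde P_1$, not $P_1$, in its corollary on the $P_k$). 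As written, your weight count therefore leaves an unaccounted contribution $\pi i\sum_{j'=2}^{N}F\bigl(b_0^{R};\ldots,(c[0]a^{j'},\zeta_{j'}),\ldots;\tau\bigr)$ sitting at the wrong weight, and the claimed homogeneity does not follow.

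The paper closes this gap with Lemma~\ref{lem:a0}: since $c_0$ commutes with all the $b_0^i$ (the first HHA axiom), the sum over $j'$ of the correlators with $c[0]a^{j'}$ inserted vanishes identically, so the constant parts of the $P_1$'s cancel and one may replace $P_1$ by $\tilde P_1$ throughout; after that substitution your weight count is valid. You need to add this cancellation step (or an equivalent argument) for the proof to be complete; everything else in your write-up matches the paper's argument.
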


\begin{proof}
We will prove this by induction in the number $r$ of zero mode insertions. The base case of $r=0$ is simply the original result of Zhu for ordinary correlators. For the case of $r+1$ we use proposition~\ref{prop:reccomm}. We simply need to check that the states appearing in the second line are still in the HHA. Note that $d^S(a^1)$ is of the form given in lemma~\ref{lem:dstate}. Since the HHA is closed under multiplications with $m\geq 0$, this is indeed the case. We can thus apply the induction assumption for the correlators with $b_0^{R-S}$ insertions together with the fact that the coefficients are given by functions $g^i_{m+1}$.

To establish the weight consider a term in the second line of (\ref{reccomm}) with $S\neq \emptyset$
.  Note that $d^S(a^1)$ has weight $\sum_{s\in S} h_{b^s} -|S|+h_{a^1}-m-1$ and $g^{|S|}_{m+1}$ has weight $|S|+m+1$. By using induction, the total weight of the term is thus 
\be
\sum_{j=2}^n h_{a^j}+\sum_{s\in S} h_{b^s} -|S|+h_{a^1}-m-1 + \sum_{s\in R-S} h_{b^s} +|S|+m+1
= \sum_{j=1}^n h_{a^j} + \sum_{s \in S}h_{b^s}\ .
\ee
For $S=\emptyset$ and $m>0$ the same argument goes through. For $S=\emptyset$ and $m=0$ lemma~\ref{lem:a0} shows that the inhomogeneous part of the $P_1$ cancel so that the above argument goes through for the $\tilde P_1$. 
\end{proof}

This implies the following result:

\begin{lem}\label{lem:FulltoZero}
Let $\{a^1,a^2,\ldots , a^N\}$ be a HHA and $\vec s \in N^n$, that is a $n$-tuple with entries in $\{1,\ldots, N\}$. Then
\be
F((a^{s_1},\zeta_1),(a^{s_2},\zeta_2),\ldots ,(a^{s_n},\zeta_n);\tau) = F(a_0^{\vec s};\tau) + \sum_{k=1}^{n-1}\sum_{\vec t \in N^k} G_{\vec t}(\zeta_1,\zeta_2,\ldots, \zeta_k;\tau) F(a_0^{\vec t};\tau)
\ee
where the $G_{\vec t}(\zeta_1,\ldots \zeta_n)$ are some functions in the algebra generated by the $g^l_{m+1}(\zeta_i/\zeta_j)$. 
\end{lem}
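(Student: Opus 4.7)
The plan is to prove the lemma by induction on $n$, the number of vertex operator insertions, using Proposition~\ref{prop:reccomm} as the basic tool. To make the induction go through smoothly, I would work with the strengthened statement
\[
F(b_0^R;(c^1,\zeta_1),\ldots,(c^n,\zeta_n);\tau) = \sum_{\vec t} G_{\vec t}(\zeta_1,\ldots,\zeta_n;\tau)\, F(b_0^R a_0^{\vec t};\tau),
\]
where $c^i$ are arbitrary states of $H_A$, $R$ is any tuple of HHA zero-mode indices, the $G_{\vec t}$ lie in the algebra generated by the $g^l_{m+1}(\zeta_i/\zeta_j,q)$, and $\vec t$ ranges over tuples in $\{1,\ldots,N\}^k$ for $0\le k\le n$. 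The lemma is recovered by setting $R=\emptyset$ and $c^i=a^{s_i}$.

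The base case $n=0$ is trivial. For the inductive step, I would apply Proposition~\ref{prop:reccomm} to the leftmost vertex operator. The recursion produces (i) an absorption term in which $c^1$ is converted into its zero mode $c^1_0$, giving a correlator with $n-1$ vertex operators and one extra zero mode, and (ii) a sum of terms of the form $g^{|S|}_{m+1}(\zeta_j/\zeta_1,q)$ times a correlator in which the insertions at $\zeta_1$ and $\zeta_j$ have been merged into the single insertion $(d^S(c^1)[m]c^j,\zeta_j)$. Both kinds of terms have $n-1$ vertex operators, so the inductive hypothesis applies to each, as long as I verify that the new insertions remain in $H_A$.

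The crucial closure check is that $d^S(c^1)[m]c^j\in H_A$. This follows from Lemma~\ref{lem:dstate}, which identifies $d^S(c^1)$ with $(-1)^{|S|}\prod b^{s_i}[0]c^1$: iterated applications of $[0]$-modes of HHA elements to $c^1\in H_A$ keep the intermediate states inside $H_A$ by the closure property \eqref{HHAact}, and a final application of \eqref{HHAact} with $m\ge 0$ completes the argument. The coefficients $g^{|S|}_{m+1}(\zeta_j/\zeta_1,q)$ produced by the recursion belong to the $g$-algebra, and multiplying them with the coefficients supplied by the inductive hypothesis stays inside the $g$-algebra, giving the desired form.

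I expect the main obstacle to lie in the absorption term (i): after the step, $c^1_0$ is the physics zero mode of a state in $H_A$, which in general is a linear combination of $L[-1]$-descendants $L[-1]^k a^p$, not of the generators themselves. To fit the right-hand side of the stronger statement, one must rewrite these absorbed zero modes in terms of the generators' zero modes $a^p_0$. The commutativity of HHA zero modes combined with Lemma~\ref{lem:a0} is the key input here: inside the trace, contributions from $L[-1]$-descendants either drop out directly or can be reorganized into combinations of zero-mode correlators of the generators, which the induction already controls. This bookkeeping, rather than the recursion itself, is the technically delicate part of the argument.
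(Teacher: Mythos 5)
Your proposal is correct and follows essentially the same route as the paper: iterate the recursion of Proposition~\ref{prop:reccomm}, using the HHA closure property to keep all merged insertions $d^S(c^1)[m]c^j$ inside $H_A$, and track that only the all-absorption branch yields the length-$n$ leading term with coefficient $1$. The one step you flag as delicate is in fact immediate: since $(L[-1]b)_0=0$ (as noted in the proof of Proposition~\ref{prop:HHAgen}), the absorbed zero mode of any element of $H_A$ is by linearity a constant linear combination of the generator zero modes $a^p_0$, so no reorganization inside the trace and no appeal to Lemma~\ref{lem:a0} is needed there.
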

\begin{proof}
Apply recursion relation~\ref{prop:reccomm} until you are left with zero mode only correlators with no vertex operators inserted. By the definition of an HHA, the only states that appear in the process are of the form $L[-1]^ja^i$. However, since any states of the form $L[-1]a$ have vanishing zero modes, the only zero modes that can appear in the resulting zero mode correlators come from the original states $a^i$. The leading term comes from keeping track of the leading term in proposition~\ref{prop:reccomm}.
\end{proof}
Note that the sum over ordered $k$-tuples $\vec t$ can be replaced by a sum over unordered $k$-tuples since the zero modes commute.
Using the fact that unit triangular matrices over rings are invertible, we can invert the identities in the above lemma. Schematically, we can thus write
\be\label{ZerotoFull}
 F(a_0^{\vec s};\tau) = F((a^{s_1},\zeta_1),(a^{s_2},\zeta_2),\ldots (a^{s_n},\zeta_s);\tau) + \sum_{k=1}^{n-1}\sum_{\vec t \in N^k} \tilde G_{\vec t}(\zeta_1,\zeta_2,\ldots, \zeta_n;\tau) F( \{(a^{t_i},\zeta_i)\}_{i=1,\ldots k};\tau)
\ee
where  $F( \{(a^{t_i},\zeta_i)\}_{i=1,\ldots k};\tau)$ is to be understood as a full correlator with fields $(a^{t_i},\zeta_i)$ inserted for all $i=1,\ldots k$ and the $ \tilde G_{\vec t}$ are some functions generated by the $g^l_{m+1}(\zeta_i/\zeta_j)$. 

\section{Examples}\label{s:examples}
\subsection{Weight 1}
As a warm up, let us return to the known case of weight 1 HHA. This case was of course already considered in  \cite{MR1738180}. In particular, the modular transformation properties of the generating function of the zero mode correlators,
\be
\chi(\tau,z) 
= \sum_{n=0}^\infty \frac{(2\pi iz)^n}{n!} \Tr (a_0)^n q^{L_0-c/24}\ ,
\ee
were proven.
Their proof is fairly lengthy; see also \cite{Krauel:2012moc} for more details of it. We will use our recursion formula to give a shorter proof of the simplest case.

We consider the weight 1 HHA generated by $\{{\bf 1},a\}$, where $a$ is single field of weight 1. We can therefore use the specialized recursion relation (\ref{Gaberdiel_recursion}) here. Moreover we have $d^s(a)=0$ unless $s=0$. Recursion relation (\ref{Gaberdiel_recursion}) thus simplifies to
\begin{multline}\label{recMiyamoto}
F(a^s_0;(a,\zeta_1),\ldots ,(a,\zeta_n);\tau)\\
=F(a^{s+1}_0;(a,\zeta_2),\ldots, (a,\zeta_n);\tau)
+ \sum_{j=2}^n   g^{0}_{2}(\zeta_j/\zeta_1,q) F(a_0^s;(a,\zeta_2), \ldots, (d^0(a)[1]a,\zeta_j), \ldots, (a,\zeta_n);\tau)\\
=
F(a^{s+1}_0;(a,\zeta_2),\ldots, (a,\zeta_n);\tau)
+ \sum_{j=2}^n \langle a,a\rangle  \frac{P_2(\zeta_j/\zeta_1,\tau)}{(2\pi i)^2} F(a_0^s;(a,\zeta_2), \ldots, ({\bf{1}},\zeta_j), \ldots ,(a,\zeta_n);\tau),
\end{multline}where $\langle a, a\rangle$ is defined by the formula $\langle a , a \rangle {\bf 1}=a(1)a$.

The idea is to take a mixed correlator with $s$ zero mode insertions and $n$ full insertions and express it in terms of full correlators with at most $n+s$ insertions in the vein of (\ref{ZerotoFull}). We can then transform the full correlators and rewrite the result in terms of mixed mode correlators again. To do this, let us introduce a few definitions.

First, for $n\geq 0$ and $s\geq 0$, let $I_n= \{1,2,\ldots, n\}$ be a set of indices, and $I_{n,s}= I_n \cup \{n+1,\ldots, n+s\}$, with $I_0=\emptyset$. We call the indices in $I_n$ the \emph{full indices}, and the indices in $I_{n,s}-I_n$ the \emph{zero indices}. Let $U=\{u_1,\ldots, u_k\} \subset I_n$ be a subset of $I_n$. 
We can use these definitions to denote mixed correlators as
\be
F_s(I_n;\tau):= F(a^s_0;(a,\zeta_{1}),\ldots,(a,\zeta_{n});\tau)
\ee
and more generally
\be
F_s(U;\tau):= F(a^s_0;(a,\zeta_{u_1}),\ldots,(a,\zeta_{u_k});\tau)\ .
\ee
Next, we split up the indices in $I_{n,s}$ into configurations: we define a \emph{configuration} $c$ as the tuple $( \{ \{p^1_1,p^2_1\},\ldots, \{p^1_l,p^2_l\}\},U)$ of unordered pairs $\{p^1_j,p^2_j\}$ of indices in $I_{N,s}$, where at least one of $p^1_j, p^2_j$ is a zero index, and $U= I_{n,s}-\cup_j \{p^1_j,p^2_j\}$ is the set of the remaining unpaired indices in $I_{n,s}$. Note that $|U|=n+s-2l$. Let $\cC_{n,s}$ denote the set of all such configurations. We will now use these configurations to give an explicit expression for (\ref{ZerotoFull}):
\begin{lem}\label{lem:w11}
For $n\geq 0$ let $I_n= \{1,2,\ldots, n\}$ be a set of indices.

Then
\be
F_s(I_n;\tau) = \sum_{c\in \cC_{n,s}} F_0(U;\tau)\prod_{j=1}^l(- \langle a,a\rangle) \frac{P_2(\zeta_{p^1_j}/\zeta_{p^2_j},\tau)}{(2\pi i)^2}. 
\ee
where $U$ is the set of unpaired indices in the configuration $c$, and the $p^{1}_j,p^{2}_j$ run over the pairs in the configuration $c$.
\end{lem}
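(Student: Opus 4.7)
The plan is to prove the lemma by induction on $s$. The base case $s=0$ is immediate: the set $\cC_{n,0}$ contains only the empty configuration with $U=I_n$ and no pairs, so the formula reduces to $F_0(I_n;\tau)=F_0(I_n;\tau)$.

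For the inductive step, the strategy is to invert recursion (\ref{recMiyamoto}) to express $F_s$ in terms of $F_{s-1}$'s. Since all vertex operator insertions involve the same field $a$, the correlator $F_s(J;\tau)$ is symmetric in the $\zeta_j$'s (as a meromorphic function), so we may apply the recursion with any index $k \in J$ playing the role of the first. Rearranging (\ref{recMiyamoto}) to solve for the $F_{s+1}$ term on its right-hand side, and then relabeling so that we solve for $F_s$ rather than $F_{s+1}$, one obtains
\[
F_s(J;\tau)=F_{s-1}(J\cup\{k\};\tau)-\sum_{j\in J}\langle a,a\rangle\frac{P_2(\zeta_j/\zeta_k,\tau)}{(2\pi i)^2}F_{s-1}(J-\{j\};\tau)
\]
for any auxiliary index $k\notin J$. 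Taking $k=n+1$ (the first zero index) and $J=I_n$, and then applying the inductive hypothesis to each of the resulting $F_{s-1}$ correlators (whose zero index sets are $\{n+2,\ldots,n+s\}$ of size $s-1$) produces a sum indexed by configurations on smaller index sets.

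The combinatorial content of the inductive step is to check that these terms repackage into exactly the sum over $\cC_{n,s}$ claimed in the lemma. Partition configurations $c\in\cC_{n,s}$ according to the role of the new index $n+1$: (i) if $n+1$ is unpaired in $c$, then $c$ corresponds to a configuration in the inductive expansion of $F_{s-1}(I_n\cup\{n+1\};\tau)$ in which $n+1$, now treated as a full index, lands in the unpaired set; (ii) if $n+1$ is paired in $c$ with some $m\in\{n+2,\ldots,n+s\}$, then $c$ again arises from the expansion of $F_{s-1}(I_n\cup\{n+1\};\tau)$, the pair being admissible there because $m$ lies in the inductive zero set; (iii) if $n+1$ is paired in $c$ with some $j\in I_n$, then the required pair factor $-\langle a,a\rangle P_2(\zeta_j/\zeta_{n+1},\tau)/(2\pi i)^2$ is supplied by the explicit sum in the inverted recursion, and the rest of $c$ restricts to a configuration in the inductive expansion of $F_{s-1}(I_n-\{j\};\tau)$. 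The configurations excluded from case (ii), namely those in which $n+1$ would be paired with a full index in the inductive expansion of $F_{s-1}(I_n\cup\{n+1\};\tau)$, are exactly the ones forbidden by the "at least one zero" condition there, so they are absent from the first term and must (and do) appear in case (iii).

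The main obstacle is precisely this bookkeeping: one must verify that the above three cases partition $\cC_{n,s}$ bijectively onto the terms produced by the recursion, and that the signs and $P_2$ factors line up. The sign works out because the rearranged recursion carries a minus in front of the $\langle a,a\rangle P_2/(2\pi i)^2$ term, matching the $-\langle a,a\rangle$ appearing in the formula, and the inductive pair factors combine multiplicatively with this new pair factor. Once the case analysis is checked, the induction closes.
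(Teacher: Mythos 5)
Your proof is correct and follows essentially the same route as the paper: both invert the recursion (\ref{recMiyamoto}) with the new insertion point $\zeta_{n+1}$ playing the role of the distinguished first variable, apply the inductive hypothesis to the two resulting terms, and match configurations according to whether the new index is unpaired, paired with a zero index (both absorbed by the $F_{s-1}(I_n\cup\{n+1\})$ term), or paired with a full index (supplied by the explicit $P_2$ sum). The only difference is cosmetic — you phrase the step as $s\to s-1$ rather than $s\to s+1$ and make the symmetry of the correlator in the $\zeta$'s explicit where the paper leaves it implicit.
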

\begin{proof}
We prove this by induction in $s$. The base case $s=0$ has only one configuration with no pairs so that the identity is trivially true. For $s+1$ we write the recursion relation (\ref{recMiyamoto}) as
\begin{multline}
F_{s+1}(I_n;\tau) =F(a^{s+1}_0;(a,\zeta_1),\ldots ,(a,\zeta_n);\tau)
\\
=
F(a^s_0;(a,\zeta_1),\ldots ,(a,\zeta_n),(a,\zeta_{n+1});\tau)
- \sum_{j=1}^n \langle a,a\rangle  \frac{P_2(\zeta_j/\zeta_{n+1},\tau)}{(2\pi i)^2} F(a_0^s;(a,\zeta_1), \ldots ,({\bf{1}},\zeta_j), \ldots (a,\zeta_n);\tau)
\end{multline}
We then apply the induction hypothesis to the two terms. The first term gives all configurations in $\cC_{n,s+1}$ where $n+1$ is either unpaired or pairs with another zero index. The second term gives all configurations where $n+1$ pairs with a full index. In total we recover all configurations $\cC_{n,s+1}$.
\end{proof}
For $n=0$ this then gives an explicit expression for (\ref{ZerotoFull}).
To compare to the notation of \cite{MR1738180, Krauel:2012moc}, note that $\cC_{0,s}$ can be written as $I(s)$, the set of involution permutations of $s$ elements. The pairs are then given by $(j,\sigma(j))$ such that $j<\sigma(j)$ and $U$ is the set $f(\sigma)$ of fixed points of $\sigma$.

Next we use lemma~\ref{lem:w11} to find the modular transformation of the zero mode correlators;
this is the analogue of lemma 4.2 in \cite{MR1738180}:
\begin{lem}\label{lemMiyamoto}
\be
(c\tau+d)^{-s}F_s(\emptyset ; \gamma\tau) = \sum_{k=0}^{s/2} F_{s-2k}(\emptyset;\tau) \left(\frac{2\pi ic \langle a,a\rangle}{(c\tau+d)(2\pi i)^2}\right)^k \frac{s!}{2^k k! (s-2k)!}
\ee
\end{lem}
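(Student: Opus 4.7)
The plan is to combine lemma~\ref{lem:w11} at $n=0$ with the modular transformation of $P_2$ and then reorganize combinatorially. Explicitly, for any choice of auxiliary positions $\zeta_1,\ldots,\zeta_s$, lemma~\ref{lem:w11} gives
\begin{equation*}
F_s(\emptyset;\tau) = \sum_{c\in \cC_{0,s}} F_0(U;\tau)\prod_{j=1}^l \left(-\frac{\langle a,a\rangle}{(2\pi i)^2}\right) P_2(z_{p^1_j}-z_{p^2_j},\tau),
\end{equation*}
and since the left-hand side is $\zeta$-independent this identity is valid at every choice of positions. I would then evaluate both sides at $\gamma\tau$ with $\gamma\in SL(2,\Z)$ and simplify.

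By Zhu's theorem~\ref{thm:Zhu}, each $F_0(U;\tau)$ is an index-$0$ Jacobi form of weight $|U|$, so $F_0(U;\gamma\tau,\{\gamma z_i\}) = (c\tau+d)^{|U|}F_0(U;\tau,\{z_i\})$, while $P_2(\gamma z,\gamma\tau)=(c\tau+d)^2P_2(z,\tau) - 2\pi i c(c\tau+d)$. Multiplying the transformed identity by $(c\tau+d)^{-s}$ and using $|U|+2l=s$, every explicit factor of $c\tau+d$ cancels inside each summand, and each of the $l$ pairs ends up contributing either its original $-\tfrac{\langle a,a\rangle}{(2\pi i)^2}P_2$-term or the constant anomaly
\begin{equation*}
A := \frac{2\pi i c\,\langle a,a\rangle}{(c\tau+d)(2\pi i)^2}.
\end{equation*}

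Next I would expand the product $\prod_{j=1}^l$ over the two choices per pair and group terms by the number $k$ of pairs that contribute $A$. The resulting double sum, over configurations $c\in \cC_{0,s}$ and $k$-subsets of the pairs in $c$, can be reindexed as: (i) pick the $2k$ anomaly indices inside $\{1,\ldots,s\}$, giving $\binom{s}{2k}$ options; (ii) form a perfect matching on them, giving $(2k-1)!! = (2k)!/(2^k k!)$ options; (iii) sum over configurations of the complementary $s-2k$ indices. By another application of lemma~\ref{lem:w11} at those leftover positions, step (iii) yields exactly $F_{s-2k}(\emptyset;\tau)$. Collecting the combinatorial factor
\begin{equation*}
\binom{s}{2k}(2k-1)!! = \frac{s!}{2^k k!(s-2k)!}
\end{equation*}
produces the stated formula.

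The only delicate step is the reindexing above, together with the reapplication of lemma~\ref{lem:w11} at the leftover positions in (iii). This is legitimate precisely because $F_{s-2k}(\emptyset;\tau)$ is independent of the chosen positions, so any $(s-2k)$-subset of $\{\zeta_1,\ldots,\zeta_s\}$ serves as a valid choice of auxiliary positions for its expansion; the modular transformation of $P_2$ and the counting of involutions are otherwise routine.
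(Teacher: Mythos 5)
Your proposal is correct and follows essentially the same route as the paper: apply lemma~\ref{lem:w11} at $n=0$, transform each summand using Zhu's theorem for $F_0(U;\tau)$ and the quasi-modular anomaly of $P_2$, then expand the product over pairs and re-sum the non-anomalous part into $F_{s-2k}(\emptyset;\tau)$ via lemma~\ref{lem:w11} again. The only difference is cosmetic: you count the anomaly configurations as $\binom{s}{2k}(2k-1)!!$, whereas the paper counts choices of $k$ unordered pairs directly, both giving $\frac{s!}{2^k k!(s-2k)!}$.
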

\begin{proof}
We have
\begin{multline}
F_s(\emptyset ; \gamma\tau) =
\sum_{c\in \cC_{0,s}} (c\tau+d)^{|U|}F_0(U;\tau)\prod_{j=1}^l\frac{- \langle a,a\rangle}{(2\pi i)^2}(c\tau+d)^2 \left(P_2(\zeta_{p^1_j}/\zeta_{p^2_j},\tau) - \frac{2\pi i c}{c\tau+d}\right)
\\
= (c\tau+d)^s\sum_{c\in \cC_{0,s}} F_0(U;\tau)\prod_{j=1}^l\frac{- \langle a,a\rangle}{(2\pi i)^2} \left(P_2(\zeta_{p^1_j}/\zeta_{p^2_j},\tau) - \frac{2\pi i c}{c\tau+d}\right).
\end{multline}
We next proceed by collecting terms that appear from expanding the parentheses in the product. First, the terms where we keep all the $P_2$ give back $(c\tau+d)^s F_s(\emptyset; \tau)$. Next we collect all the terms where instead of $P_2(\zeta_1/\zeta_2)$ we take the $-2\pi ic/(c\tau+d)$ term. We observe that these terms correspond to summing over configurations $\cC_{0,s-2}$. This means that the terms sum up to 
\be
(c\tau+d)^s F_{s-2}(\emptyset,\tau) \frac{2\pi ic \langle a,a\rangle}{(c\tau+d)(2\pi i)^2}\ .
\ee
The same holds for any other choice of $P_2(\zeta_i/\zeta_j)$. In total there are $s(s-1)/2$ such choices. Proceeding further with this approach, the terms omitting $k$ $P_2$s give a total contribution 
\be
(c\tau+d)^s F_{s-2k}(\emptyset,\tau) \left(\frac{2\pi ic \langle a,a\rangle}{(c\tau+d)(2\pi i)^2}\right)^k \frac{s!}{2^k k! (s-2k)!}\ .
\ee
The combinatorial factor comes from choosing $k$ unordered pairs out of $s$ indices; since the order of the pairs themselves doesn't matter we have an additional factor $k!$ in the denominator.
Summing over $k$ then establishes the claim.

\end{proof}
This gives us all the tools we need to prove the simplest case of the main proposition in \cite{MR1738180}:
\begin{prop}
\be
\chi(\gamma\tau, \gamma z) = \exp\left(\frac{\pi i \langle a,a\rangle c z^2}{c\tau+d}\right) \chi(\tau,z)\ .
\ee
\end{prop}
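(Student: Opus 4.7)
The plan is to match power series coefficients in $z$ on both sides of the desired identity, reducing everything to the previously established lemma~\ref{lemMiyamoto}.

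First I would expand the left-hand side directly from the definition of $\chi$. Since $\gamma z = z/(c\tau+d)$, we have
\[
\chi(\gamma\tau,\gamma z) = \sum_{s=0}^\infty \frac{(2\pi i z)^s}{s!\,(c\tau+d)^s}\, F_s(\emptyset;\gamma\tau),
\]
so the coefficient of $(2\pi iz)^s/s!$ is exactly $(c\tau+d)^{-s}F_s(\emptyset;\gamma\tau)$, which lemma~\ref{lemMiyamoto} evaluates for us.

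Next I would expand the right-hand side. Writing $z^{2k} = (2\pi iz)^{2k}/(2\pi i)^{2k}$, the exponential becomes
\[
\exp\!\left(\frac{\pi i\langle a,a\rangle c z^2}{c\tau+d}\right) = \sum_{k=0}^\infty \frac{(2\pi iz)^{2k}}{k!}\cdot\frac{(\pi i\langle a,a\rangle c)^k}{(2\pi i)^{2k}(c\tau+d)^k}.
\]
Multiplying by $\chi(\tau,z) = \sum_m (2\pi iz)^m F_m(\emptyset;\tau)/m!$ and extracting the coefficient of $(2\pi iz)^s/s!$ yields
\[
\sum_{k=0}^{\lfloor s/2\rfloor} \frac{s!}{k!\,(s-2k)!}\cdot\frac{(\pi i\langle a,a\rangle c)^k}{(2\pi i)^{2k}(c\tau+d)^k}\, F_{s-2k}(\emptyset;\tau).
\]

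Finally, I would check that this matches the expression given by lemma~\ref{lemMiyamoto}. The match hinges on the simple identity
\[
\frac{(\pi i\langle a,a\rangle c)^k}{(2\pi i)^{2k}(c\tau+d)^k\,k!} = \left(\frac{2\pi ic\langle a,a\rangle}{(c\tau+d)(2\pi i)^2}\right)^k\frac{1}{2^k k!},
\]
which absorbs the $2^k$ factor from lemma~\ref{lemMiyamoto}. Since the coefficients agree for every $s$, the two holomorphic functions of $z$ coincide, proving the proposition.

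There is no real obstacle: the content is entirely in lemma~\ref{lemMiyamoto}. The only thing to get right is the bookkeeping of the $(2\pi i)$ factors coming from the conventions $\chi(\tau,z)=\sum (2\pi iz)^n \operatorname{Tr}(a_0)^n q^{L_0-c/24}/n!$ and $P_2 = (2\pi i)^2(\cdots)$; once those are tracked carefully, the exponential form on the right is precisely the generating function assembling the quasi-modular tails of $F_s$ into a closed-form anomaly factor.
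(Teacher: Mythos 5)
Your proposal is correct and follows essentially the same route as the paper: both expand $\chi$ in powers of $z$, invoke Lemma~\ref{lemMiyamoto} for $(c\tau+d)^{-s}F_s(\emptyset;\gamma\tau)$, and reorganize the resulting double sum, the only difference being that you match coefficients of $(2\pi iz)^s/s!$ term by term while the paper resums via the substitution $n=m-k$. The $(2\pi i)$ and $2^k$ bookkeeping you check is exactly the identity used there, so nothing further is needed.
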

\begin{proof}
\begin{multline}
\chi(\gamma\tau, \gamma z)
=\sum_{m=0}^\infty \frac{1}{(2m)!}F_{2m}(\emptyset;\gamma\tau) \left( \frac{2\pi i z}{c\tau+d}\right)^{2m}
\\
=\sum_{m=0}^\infty \sum_{k=0}^m \frac{1}{(2m)!} (c\tau+d)^{2m}\left(\frac{2\pi ic \langle a,a\rangle}{(c\tau+d)(2\pi i)^2}\right)^k \frac{(2m)!}{2^k k!(2m-2k)!} F_{2m-2k}(\emptyset;\tau) 
 \left( \frac{2\pi i z}{c\tau+d}\right)^{2m}\\
 = \sum_{m=0}^\infty \sum_{k=0}^m \frac{1}{k!(2m-2k)!} \left(\frac{\pi ic \langle a,a\rangle z^2}{c\tau+d}\right)^k  F_{2m-2k}(\emptyset;\tau) 
 \left( 2\pi i z\right)^{2m-2k}\\
 = \sum_{k=0}^\infty \frac1{k!}\left(\frac{\pi ic \langle a,a\rangle  z^2}{c\tau+d}\right)^k \sum_{n=0}^\infty \frac1{(2n)!}F_{2n}(\emptyset;\tau) (2\pi iz)^{2n}\\
 = \exp\left(\frac{\pi ic \langle a,a\rangle z^2}{c\tau+d}\right) \chi(\tau,z)
\end{multline}
where in line 4 we introduced a new summation variable $n=m-k$. 
\end{proof}

\subsection{Weight 2: General structure}
For weight 2, one obvious example to choose is $v:=L[-2]{\bf 1}=\omega-\frac{c}{24}{\bf 1}$. In that case we have $v_0=L_0 -c/24$. Zero mode correlators then automatically have the right form because we can write
\be
\Tr (v_0)^n q^{L_0-c/24} = \left( q \frac{d}{dq}\right)^n \Tr q^{L_0-c/24}\ ,
\ee
which indeed transforms like a quasi-modular form of weight $2n$. Our results thus add nothing new in this case.

Instead we will thus consider a slightly more general example  inspired by the above.
Let $a(-1){\bf 1}\in V$ such that $\langle a(-1){\bf 1}\rangle $ is a subVOA of $V$ isomorphic to the Heisenberg vertex operator algebra. Define $x=a[-1]^2{\bf 1}.$ Then

\begin{equation}\label{torusmodes}
x[m]x=
    \begin{cases}
        \displaystyle\frac{4}{2\pi i}(a(-1)^2{\bf 1}+a(-1)a(-2){\bf 1})=\frac{2}{(2\pi i)^2}L[-1]x
        & \text{if } m=0\\
        \displaystyle\frac{4}{(2\pi i)^2}x & \text{if } m=1\\
         0 & \text{if }  m=2\\
        \displaystyle \frac{2}{(2\pi i)^4}{\bf 1} & \text{if } m=3\\
         0 & \text{if } m>3.\\
    \end{cases}
\end{equation}
$\{{\bf 1}, x\}$ is then an HHA.
Let us show how (\ref{ZerotoFull})
works in this case and give explicit expressions for $s=1,2,3$. For this we use the simpler case (\ref{Gaberdiel_recursion}) of the recursion relation. In the process we can use
\be
d^s(x) =(-1)^s\left(\frac{2}{(2\pi i)^2}\right)^s (L[-1])^s x
\ee
as follows from induction and the fact that $x[0]$ commutes with $L[-1]$, and 
\be
d^s(x)[m]= \left(\frac{2}{(2\pi i)^2}\right)^s m(m-1)\ldots (m-s+1) x[m-s]\ .
\ee
For $s=1$ we simply have
\begin{eqnarray*}
F(x_0;\tau)=F((x, \zeta_1);\tau)
\end{eqnarray*}
For $s=2$ we have
\begin{multline}\label{w2s2}
F(x_0^2;\tau)= F(x_0; (x, \zeta_2);\tau) = F((x, \zeta_1), (x, \zeta_2);\tau)-
\sum_{m\in \N} P_{m+1}(\frac{\zeta_2}{\zeta_1}, q)F((x[m]x, \zeta_2);\tau)\\
=F((x, \zeta_1), (x, \zeta_2);\tau)-\frac{4}{(2\pi i)^2}P_2(\frac{\zeta_2}{\zeta_1},q)F((x,\zeta_2);\tau)-\frac{2}{(2\pi i )^4}P_4(\frac{\zeta_2}{\zeta_1},q)F(({\bf{1}},\zeta_2);\tau)\ .
\end{multline}
Note that the vanishing of the $P_1$ term in the above computation is because the zero mode of $x[0]x=\frac{2}{(2\pi i)^2}L[-1]x$ is $0$. This is necessary to make the resulting expression homogeneous of weight 4.

Finally consider $s=3$. We have
\begin{multline}\label{Fx03}
  F(x_0^3; \tau) = F(x_0^2;  (x, \zeta_3);\tau)=  F(x_0, (x, \zeta_2), (x, \zeta_3);\tau)-\sum_{i=0}^1\sum_{m\in \mathbb{N}}g_{m+1}^i(\frac{\zeta_3}{\zeta_2},q)F(x_0^{1-i};(d^{(i)}[m]x,\zeta_3);\tau)\\
  =F(x_0, (x, \zeta_2), (x, \zeta_3);\tau)-\sum_{m=1}^3P_{m+1}(\frac{\zeta_3}{\zeta_2}, q)F(x_0; (x[m]x, \zeta_3);\tau)
  +\sum_{m=2}^4g_{m+1}^1(\frac{\zeta_3}{\zeta_2}, q)F( ((x[0]x)[m]x, \zeta_3);\tau)
\end{multline}
where we used lemma~\ref{lem:a0} to eliminate the $m=0$ in the first sum and (because of $(x[0]x)[m] \sim x[m-1]$) the $m=0,1$ terms in the second sum. Next we have 
\begin{multline}\label{Fxxxrec}
F(x_0; (x, \zeta_2), (x, \zeta_3);\tau) = F((x, \zeta_1), (x, \zeta_2), (x, \zeta_3);\tau)\\ - \sum_{m=0}^3 P_{m+1}(\frac{\zeta_2}{\zeta_1},q)F((x[m]x,\zeta_2),(x,\zeta_3);\tau) - \sum_{m=0}^3 P_{m+1}(\frac{\zeta_3}{\zeta_1},q)F((x,\zeta_2),(x[m]x,\zeta_3);\tau)
\end{multline}
Note that because of lemma~\ref{lem:a0} we can replace the $P_1$ by $\tilde P_1$ in the sums. Finally we have
\be
F(x_0; (x[m]x, \zeta_3);\tau)= F((x,\zeta_1),(x[m]x,\zeta_3);\tau)-\sum_{k=1}^\infty P_{k+1}(\frac{\zeta_3}{\zeta_1},q)F((x[k]x[m]x,\zeta_3);\tau)\ .
\ee
Let us now work out what this means for the modular transformations of the zero mode correlators. 
For $s=1$ we obviously have
\be
(c\tau+d)^{-2}F(x_0;\gamma \tau) = F(x_0;\tau)\ .
\ee
For $s=2$ we find
\be
(c\tau+d)^{-4}F(x_0^2;\gamma \tau) = F(x_0^2;\tau) +\frac{4 c}{2\pi i(c\tau+d)} F((x,\zeta_2);\tau)
= F(x_0^2;\tau) + \frac{4 c}{2\pi i(c\tau+d)} F(x_0;\tau)\ .
\ee
Finally consider $s=3$. We have
\begin{multline}
\Delta F(x_0; (x, \zeta_2), (x, \zeta_3);\tau) = \frac{2\pi i c}{(c\tau+d)} (F((x[1]x,\zeta_2),(x,\zeta_3);\tau) + F((x,\zeta_2),(x[1]x,\zeta_3);\tau)\\
= \frac{2\pi i c}{(c\tau+d)}\frac{8}{(2\pi i)^2} F((x,\zeta_2),(x,\zeta_3);\tau) 
\end{multline}
Next we have
\be
\Delta F(x_0; (x[m]x, \zeta_3);\tau) = \frac{2\pi ic}{c\tau+d}F((x[1]x[m]x,\zeta_3);\tau).
\ee
This means the modular anomaly of the first sum in the second line of (\ref{Fx03}) is
\begin{multline}
-\sum_{m=1}^3 P_{m+1}(\frac{\zeta_3}{\zeta_2},q)  \frac{2\pi ic}{c\tau+d}F((x[1]x[m]x,\zeta_3);\tau) + \frac{2\pi ic}{c\tau+d} F(x_0;(x[1]x,\zeta_3);\tau)+ \left(\frac{2\pi ic}{c\tau+d}\right)^2F((x[1]x[1]x,\zeta_3);\tau)\\
=-\frac{2\pi ic}{c\tau+d} \frac 4{(2\pi i)^2}P_{2}(\frac{\zeta_3}{\zeta_2},q)F((x[1]x,\zeta_3),\tau) + \frac{2\pi ic}{c\tau+d} F(x_0;(x[1]x,\zeta_3);\tau)+
\left(\frac{2\pi ic}{c\tau+d}\right)^2 \frac 4{(2\pi i)^2}F((x[1]x,\zeta_3);\tau)\\
=\frac{2\pi ic}{c\tau+d} \frac{4}{(2\pi i)^2}\left(-P_{2}(\frac{\zeta_3}{\zeta_2},q)F((x[1]x,\zeta_3),\tau) + F(x_0;(x,\zeta_3);\tau)\right)+
\left(\frac{2\pi ic}{c\tau+d}\right)^2 \frac 4{(2\pi i)^2}F((x[1]x,\zeta_3);\tau)\\=
\frac{2\pi ic}{c\tau+d} \frac{4}{(2\pi i)^2}\left(
F((x, \zeta_2), (x, \zeta_3);\tau)-
2P_2(\frac{\zeta_3}{\zeta_2},q)F((x[1]x,\zeta_3);\tau)-
P_4(\frac{\zeta_3}{\zeta_2},q)F((x[3]x,\zeta_3);\tau)\right)\\
+
\left(\frac{2\pi ic}{c\tau+d}\right)^2 \frac {16}{(2\pi i)^4}F((x,\zeta_3);\tau)
\end{multline}
and for the second sum in the second line of  (\ref{Fx03}) is
\begin{multline}
-\frac{1}{2}\left(\frac{2\pi i c}{c\tau+d}\right)^2F((x[0]x)[2]x, \zeta_3);\tau)+\sum_{m=2}^4\frac{2\pi ic}{c\tau+d} P_m(\frac{\zeta_3}{\zeta_2},q) F( ((x[0]x)[m]x, \zeta_3);\tau)\\
= -\frac{1}{2}\left(\frac{2\pi i c}{c\tau+d}\right)^2\left(\frac{2}{(2\pi i )^2}\left(-2F((x[1]x, \zeta_3);\tau)\right)\right)+\frac{2\pi ic}{c\tau+d}\frac{2}{(2\pi i)^2}
\sum_{m=2}^4 -m P_m(\frac{\zeta_3}{\zeta_2},q) F( (x[m-1]x, \zeta_3);\tau)\\
=\left(\frac{2\pi i c}{c\tau+d}\right)^2\frac{8}{(2\pi i )^4}F((x, \zeta_3);\tau)+\frac{2\pi ic}{c\tau+d}\frac{2}{(2\pi i)^2} \left(
-2P_2(\frac{\zeta_3}{\zeta_2},q) F( (x[1]x, \zeta_3);\tau) -4P_4(\frac{\zeta_3}{\zeta_2},q) F( (x[3]x, \zeta_3);\tau)
\right)
\end{multline}
Collecting everything we get
\begin{multline}
\Delta F(x_0^3;\tau) =
\frac{2\pi ic}{c\tau+d} \frac{4}{(2\pi i)^2}\left(3F((x,\zeta_2),(x,\zeta_3);\tau) - 3P_2(\frac{\zeta_3}{\zeta_2},q) F( (x[1]x, \zeta_3);\tau) - 3P_4(\frac{\zeta_3}{\zeta_2},q) F( (x[3]x, \zeta_3);\tau)
\right)
\\ + \left(\frac{2\pi ic}{c\tau+d}\right)^2 \frac {24}{(2\pi i)^4}F((x,\zeta_3);\tau)
\end{multline}
In total we thus get
\be
(c\tau+d)^{-6}F(x_0^3;\gamma \tau) = F(x_0^3;\tau)
+\frac{12 c}{2\pi i(c\tau+d)}F(x_0^2;\tau)+24 \left(\frac{c}{2\pi i(c\tau+d)}\right)^2 F(x_0;\tau)\ .
\ee
We see that the up to this order, the zero mode correlators have the right transformation structure; namely that their modular anomalies are again zero mode correlators. It is natural to conjecture that the same continues to hold for higher $s$, although we did not check this.

\subsection{Weight 2: Lattice VOAs}
Let us now discuss a specific realization of the weight 2 HHA described above in the context of lattice VOAs. This is essentially the case discussed in \cite{MR1827085}, and their results together with \cite{MR1877753,MR3248159} allow us to give explicit expressions for the zero mode correlators.

Let $L$ be an even positive definite lattice of dimension $l$ with bilinear form denoted by $\langle\cdot, \cdot\rangle$. We can associate a vertex operator algebra $V_L$ to it \cite{MR843307,MR996026}. As a vector space, the VOA is given by $V_L=S(\widehat{\mathfrak{h}}^{-})\otimes \mathbb{C}_\epsilon[L]$ where $S(\widehat{\mathfrak{h}}^{-})$ is the Heisenberg algebra of the ambient space $\mathfrak{h}=L\otimes_\mathbb{Z}\mathbb{C}$ and $\mathbb{C}_\epsilon[L]$ is the group algebra of $L$ twisted by a cocycle $\epsilon$ that satisfies $\epsilon(\alpha,\alpha)=(-1)^{\langle \alpha,\alpha \rangle}$ and $\epsilon(\alpha,\beta)/\epsilon(\beta,\alpha)=(-1)^{\langle\alpha,\beta\rangle}$. 
Given $\alpha\in L$, we denote the image of $\alpha$ in $\mathbb{C}_\epsilon[L]$ by $\mathfrak{e}_\alpha$. 
Given $a\in \mathfrak{h}$, we define its modes via
\begin{equation}
Y(a,z)=\sum_{n\in \mathbb{Z}}a(n)z^{-n-1}\ .
\end{equation}
Let $h_1, \cdots, h_l$ be an orthonormal basis for $\mathfrak{h}$. Then the conformal vector is given by
\begin{equation}
\omega=\sum_{i=1}^l\frac{1}{2}h_i(-1)^2\ .
\end{equation}
$V_L$ is then spanned by the vectors
\be
h_1(-n_{1k_{1}})\cdots h_i(-n_{ij})\cdots h_l(-n_{lk_{l}}) \mathfrak{e}_{\alpha}
\ee
where $\alpha\in L$, and $n_{ij}\in \mathbb{Z}_+$, and their conformal weights are given by 
\be
\langle \alpha,\alpha\rangle/2+ \sum_{i=1}^l\sum_{j=1}^{k_i} n_{ij}\ .
\ee

\begin{prop}\label{quasimod}Let $v=h_1[-1]^2{\bf 1}=h_1(-1)^2{\bf 1}-\frac{1}{12}{\bf 1} \in V_L$ and let $v_0$ denote the zero mode of $v$.  Fix $n\ge 0$. Then
\be
\Tr_{V_L}v_0^nq^{L_0-l/24}
=\sum_{j=0}^n\binom{n}{j}\sum_{\alpha\in L}\langle h_1, \alpha\rangle^{2j}q^{\langle \alpha, \alpha\rangle/2}\frac{1}{\eta(q)^{l-1}}\left(2q\frac{d}{dq}\right)^{n-j}\eta(q)^{-1}\ .
\ee
\end{prop}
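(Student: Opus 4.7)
The plan is to compute the zero mode $v_0$ explicitly and then factorize the trace using the tensor product structure $V_L = \bigotimes_{i=1}^l S(\widehat{\mathfrak{h}}_i^-) \otimes \mathbb{C}_\epsilon[L]$ of the lattice VOA.

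First I would compute $v_0$ directly from the vertex operator $Y(h_1(-1)^2\mathbf{1},z) = {:}Y(h_1,z)^2{:}$. Since $h_1(-1)^2\mathbf{1}$ has conformal weight $2$, $o(h_1(-1)^2\mathbf{1})$ is the coefficient of $z^{-2}$, and a routine normal-ordering argument splitting $\sum_{k\in\mathbb{Z}}{:}h_1(-k)h_1(k){:}$ according to the sign of $k$ gives $o(h_1(-1)^2\mathbf{1}) = h_1(0)^2 + 2N_1$ with $N_1 := \sum_{k>0} h_1(-k) h_1(k)$. Hence $v_0 = h_1(0)^2 + 2N_1 - \tfrac{1}{12}$.

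Second, I would split the trace according to the lattice decomposition. On the $\alpha$-sector $S(\widehat{\mathfrak{h}}^-) \otimes \mathfrak{e}_\alpha$, the central element $h_1(0)$ acts as the scalar $\langle h_1, \alpha\rangle$ and $L_0$ acts as $\tfrac{1}{2}\langle \alpha,\alpha\rangle + \sum_i N_i$, so $q^{L_0-l/24}$ factorizes as $q^{\langle\alpha,\alpha\rangle/2}\prod_i q^{N_i-1/24}$. Since $h_1(0)$ commutes with $N_1$, the binomial theorem yields
\[
v_0^n\big|_\alpha = \sum_{j=0}^n \binom{n}{j} \langle h_1,\alpha\rangle^{2j} \left(2N_1 - \tfrac{1}{12}\right)^{n-j}.
\]
Because $N_1$ acts only on the first Heisenberg factor $S(\widehat{\mathfrak{h}}_1^-)$, the trace splits into a nontrivial trace on that factor times $l-1$ copies of $\Tr_{S(\widehat{\mathfrak{h}}_i^-)} q^{N_i-1/24} = \eta(q)^{-1}$.

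Third, I would evaluate the first Heisenberg trace using the identity $2N_1 - \tfrac{1}{12} = 2(N_1 - \tfrac{1}{24})$ together with $(N_1 - \tfrac{1}{24}) q^{N_1 - 1/24} = q\tfrac{d}{dq} q^{N_1 - 1/24}$. Pulling the differential operator outside the trace gives
\[
\Tr_{S(\widehat{\mathfrak{h}}_1^-)} \left(2N_1 - \tfrac{1}{12}\right)^m q^{N_1 - 1/24} = \left(2q\tfrac{d}{dq}\right)^m \eta(q)^{-1},
\]
and reassembling all factors reproduces the claimed formula. I do not expect a serious obstacle; the one step demanding care is the normal-ordering calculation in the first step, since an error in the coefficient ``$2$'' of $N_1$ would propagate multiplicatively through the action of $q\tfrac{d}{dq}$ and spoil every subsequent factor.
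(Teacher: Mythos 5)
Your proposal is correct and follows essentially the same route as the paper: the paper's proof likewise identifies the eigenvalue of $v_0$ on each basis vector of the $\alpha$-sector as $\langle h_1,\alpha\rangle^2 + 2\sum_m n_{1m} - \tfrac{1}{12}$ (which is exactly your operator identity $v_0 = h_1(0)^2 + 2N_1 - \tfrac{1}{12}$), applies the binomial theorem, and converts the insertion of $2\sum_m n_{1m} - \tfrac{1}{12}$ in the partition sum into the operator $2q\tfrac{d}{dq}$ acting on $\eta(q)^{-1}$, with the remaining $l-1$ Heisenberg directions contributing $\eta(q)^{-(l-1)}$. The only difference is presentational: you derive the zero mode by normal ordering before tracing, while the paper works directly with the diagonal action on the spanning vectors.
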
  
\begin{proof}Given 
\begin{equation}\label{vector}
h_1(-n_{1k_{1}})\cdots h_i(-n_{ij})\cdots h_l(-n_{lk_{l}}) \mathfrak{e}_{\alpha}\ ,
\end{equation} 
we have for the action of $v_0$
\begin{eqnarray}
\lefteqn{v_0^nh_1(-n_{1k_{1}})\cdots h_i(-n_{ij})\cdots h_l(-n_{lk_{l}}) \mathfrak{e}_{\alpha}}\nonumber\\
&=&(\langle h_1, \alpha\rangle^2+2\sum_{j=1}^{k_1} n_{1j}-\frac1{12})^nh_1(-n_{1k_{1}})\cdots h_i(-n_{ij})\cdots h_l(-n_{lk_{l}}) \mathfrak{e}_{\alpha}\nonumber\\
&=&\sum_{j=0}^n\binom{n}{j}\langle h_1, \alpha\rangle^{2j}\left(2\sum_{m=1}^{k_1} n_{1m}-\frac1{12}\right)^{n-j}h_1(-n_{1k_{1}})\cdots h_i(-n_{ij})\cdots h_l(-n_{lk_{l}}) \mathfrak{e}_{\alpha}\label{sum}.
\end{eqnarray}
As usual, when summing over all $n_{1m}$ without any modes inserted we obtain the generating function of integer partitions $\prod_{i=1}^\infty (1-q^i)^{-1}=q^{1/24}\eta(q)^{-1}$. When inserting the factor $(2\sum_m n_{1m} - 1/12)$ in this sum, we can replace it by a derivative $2q \frac{d}{dq}$. The sums over the $n_{jm}$ for $j>1$ simply gives the usual $q^{1/24}\eta(q)^{-1}$ factors.
Therefore,
\be
\Tr_{V_L}v_0^nq^{L_0-l/24}
=\sum_{j=0}^n\binom{n}{j}\sum_{\alpha\in L}\langle h_1, \alpha\rangle^{2j}q^{\langle \alpha, \alpha\rangle/2}
\frac{1}{\eta(q)^{l-1}}\left(2q\frac{d}{dq}\right)^{n-j}\eta(q)^{-1}\ 
\ee
as claimed.
\end{proof}
Let us now specialize to the case of even unimodular lattices whose dimension $l$ is a multiple of 24. In that case $V_L$ is a holomorphic VOA with central charge a multiple of 24. 
By the results of \cite{MR1877753} and \cite{MR3248159}, $\sum_{\alpha\in L}\langle h_1, \alpha\rangle^{2j}q^{\langle \alpha, \alpha\rangle/2}$ is a quasi-modular form of weight $l/2+2j$. On the other hand we know that $\frac{1}{\eta(q)^{l-1}}\left(2q\frac{d}{dq}\right)^{n-j}\eta(q)^{-1}$ is a quasi-modular form of weight $-l/2+2(n-j)$. In total the zero mode correlator is thus a quasi-modular form of weight $2n$, exactly as predicted by proposition~\ref{prop:HHA} specialized to zero mode correlators.

\subsection{Weight 3}
Finally let us make a very brief comment on the weight 3 case.
We will not work out a concrete example here, but want to point out that functions of the type $\Theta(x,\tau,z)$ introduced in  \cite{MR1363056} may be candidates for the generating functions of weight 3 HHA zero mode correlators.

\appendix

\section{Transformation properties of $g^i_j(z,\tau)$}\label{app:gij}

\begin{proposition}Given any $i,j\in \mathbb{N}$, with $j>0$, $g_j^i(z,\tau)$ is a quasi-Jacobi form of weight $i+j$.
\end{proposition}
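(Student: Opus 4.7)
First, I would derive two differential identities by term-by-term differentiation of the series defining $g^i_j$, using $\partial_\tau q^n = 2\pi i n q^n$ and (writing $z$ for $\zeta = e^{2\pi i z}$ as the paper does) $\partial_z z^n = 2\pi i n z^n$:
\begin{align*}
\partial_\tau g^i_j = \frac{j}{2\pi i}\, g^{i+1}_{j+1}, \qquad \partial_z g^i_j = j\, g^i_{j+1}.
\end{align*}
Both operators preserve the index-$0$ (weak) quasi-Jacobi property, raising the weight by $2$ and by $1$ respectively. Combined with the base case $g^0_j = P_j$, which is immediate from the definition and is quasi-Jacobi of weight $j$ by Section~2, the first identity, iterated $i$ times starting from $g^0_{j-i}$, exhibits $g^i_j$ (whenever $j > i$) as a nonzero constant multiple of $\partial_\tau^i P_{j-i}$, hence quasi-Jacobi of weight $(j-i) + 2i = i+j$. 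This is the regime the paper already indicates as immediate before stating the proposition.

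The substantive regime is $j \le i$. Here the first identity does not suffice: the relation $g^i_j = \tfrac{2\pi i}{j-1}\partial_\tau g^{i-1}_{j-1}$ requires $j-1 \ge 1$. Iterating the second identity instead gives
\[ g^i_j = \frac{1}{(j-1)!}\, \partial_z^{j-1} g^i_1, \]
so it suffices to show that $g^i_1$ is quasi-Jacobi of weight $i+1$ for every $i \ge 0$. The case $i = 0$ is $g^0_1 = P_1$; for $i \ge 1$ my plan is to introduce the auxiliary generating function
\[ \Psi(x; z, \tau) := \sum_{n \ne 0} \frac{z^n}{1 - q^n e^{2\pi i x}}, \]
verify that $g^i_1(z,\tau) = 2\pi i\, \partial_x^i \Psi(x;z,\tau)|_{x = 0}$ — which follows from the elementary identity $\partial_x^i (1-q^n e^{2\pi i x})^{-1}|_{x=0} = (2\pi i)^i R_i(q^n)$ with $R_i(u) := (u\partial_u)^i (1-u)^{-1}$ — and then argue that $\Psi$ is a meromorphic (quasi-)Jacobi form of weight $1$ in the pair of elliptic variables $(x,z)$, so that its Taylor coefficients at $x=0$ are quasi-Jacobi in $z$ with weight increasing by $1$ per $\partial_x$.

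The hard part will be verifying the transformation law of $\Psi$ itself, since it is meromorphic with pole divisor $\{q^n e^{2\pi i x} = 1\}$ and so is not literally a holomorphic Jacobi form; its transformation has to be extracted by splitting the sum according to the sign of $n$ and applying a Hecke-type regularization in the modular variable. A more hands-on alternative for $i = 1$, avoiding the new variable $x$, is to use the explicit identification $g^1_1 = -2\pi i\, \partial_\tau \log \prod_{k \ge 1}(1-z q^k)(1-z^{-1} q^k)$ and re-express the right-hand side via the product formula for $\theta_1(z,\tau)$ in terms of $\partial_\tau \log \theta_1$ and $\partial_\tau \log \eta$, both of which are classically quasi-Jacobi/quasi-modular of weight $2$; higher $g^i_1$ could then be reached by iterating heat-equation-style identities relating them to derivatives of theta functions, though this route is less uniform than the single generating-function argument.
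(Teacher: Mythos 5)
Your reduction steps are sound and essentially match the paper's: for $j>i$ you identify $g^i_j$ with a constant multiple of $\partial_\tau^i P_{j-i}$ exactly as the paper does, and for $j\le i$ your identity $\partial_z g^i_j=j\,g^i_{j+1}$ (hence $g^i_j=\tfrac{1}{(j-1)!}\partial_z^{j-1}g^i_1$) is a correct and equally valid alternative to the paper's reduction via $\partial_\tau g^i_j=\tfrac{j}{2\pi i}g^{i+1}_{j+1}$; both leave you needing to prove that $g^m_1$ is a quasi-Jacobi form of weight $m+1$ for all $m\ge 1$. The problem is that this last statement is the entire content of the proposition in the remaining regime, and you do not prove it. Your generating function $\Psi(x;z,\tau)=\sum_{n\ne0}z^n(1-q^ne^{2\pi ix})^{-1}$ does satisfy $g^i_1=2\pi i\,\partial_x^i\Psi|_{x=0}$ (your computation with $R_i(u)=(u\partial_u)^i(1-u)^{-1}$ is correct), but you then explicitly defer ``the hard part,'' namely the modular and elliptic transformation laws of $\Psi$, to an unexecuted ``Hecke-type regularization,'' and you would additionally need a Taylor-coefficient argument showing that the $x$-expansion coefficients of a two-variable meromorphic Jacobi form of nontrivial index are quasi-Jacobi of the claimed weights. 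None of this is carried out, and your fallback via $\partial_\tau\log\theta_1$ only covers $i=1$. So the proposal as written has a genuine gap at precisely the nontrivial point.

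For comparison, the paper closes this gap by an elementary expansion rather than a two-variable generating function: starting from the classical identity $2\pi i\sum_{n\ne0}\zeta^n(1-q^n)^{-1}=-\tfrac1z+\sum_{n\ge0}G_{2n+2}(\tau)z^{2n+1}-\pi i$, it applies $\partial_\tau^m$ (killing the non-modular terms $-\tfrac1z$ and $-\pi i$) and then integrates $m$ times in $z$ from $0$, which inserts the factor $n^{-m}$ on the left and yields
\begin{equation*}
g^m_1(z,\tau)=(2\pi i)^m\sum_{n\ge0}\frac{\partial_\tau^m G_{2n+2}(\tau)}{(2n+2)\cdots(2n+1+m)}\,z^{2n+1+m},
\end{equation*}
so the modular transformation law of weight $m+1$ and depth $m$ is read off termwise from the quasi-modularity of $\partial_\tau^m G_{2n+2}$; the elliptic property under $z\mapsto z+\lambda\tau$ is then checked separately by induction on $m$ using $g^{m+1}_1=\int_0^z\partial_\tau g^m_1$ and the chain rule. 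If you want to salvage your route, the most direct fix is to abandon $\Psi$ and adopt this expansion argument for $g^i_1$, after which your $\partial_z^{j-1}$ reduction finishes the proof.
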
 
\begin{proof}
First we note that $\wp_k(z,\tau)$ $k\ge 1$ converge absolutely and uniformly on compact sets away from lattice points \cite{MR890960}. This means we can take derivatives and anti-derivatives, as long as we avoid introducing branch cuts.

Let us first establish (\ref{quasimodular}).
If $i<j$ we can use the identity 
\begin{equation}\label{0}g_j^i(z,\tau)=(2\pi i)^i\frac{(j-i-1)!}{(j-1)!}\partial_\tau^iP_{j-i}(z,\tau)\ .
\end{equation}
Since for all $k>0$, $P_k(z,\tau)$ is a quasi-Jacobi form of weight $k$, and application of $\partial_\tau$ sends quasi-Jacobi forms of weight $k$ to quasi-Jacobi forms of weight $k+2$, equation \eqref{0} therefore establishes that $g_j^i(z,\tau)$ is a quasi-Jacobi form of weight $i+j$. 

It remains to establish this for $i\ge j$. 
First observe that given any $i,j\in \mathbb{N}$ with $j>0$, 
\begin{equation}\label{0a}
\partial_\tau g_j^i(z,\tau)=\frac{j}{2\pi i}g_{j+1}^{i+1}(z,\tau)\ ,
\end{equation}
which implies that in order to prove  the result we need only establish that any $g_1^m(z,\tau)$ with $m\ge 1$ is a quasi-Jacobi form of weight $m+1$.

Let $m\in \mathbb{N}$, $m>0$ and define $q_z:=e^{2\pi i z}$. Combining the definition of $g_1^0(z,\tau)=P_1(z,\tau)$ with equations (3.6) and (3.11) in \cite{Z}, we have
\begin{equation}\label{1}
2\pi i \sum_{n\ne 0}q_z^n(1-q^n)^{-1}=-\frac{1}{z}+\sum_{n=0}^\infty G_{2n+2}(\tau)z^{2n+1}-\pi i,
\end{equation} and applying $\partial_\tau^m$ to both sides of this equation we have
\begin{equation}
2\pi i \sum_{n\ne 0}q_z^n\partial_\tau^m(1-q^n)^{-1}=\sum_{n=0}^\infty \partial_\tau^m G_{2n+2}(\tau)z^{2n+1}.
\end{equation}Regarding both sides of this last equation as formal series in $z$ we integrate both sides formally with respect to $z$, sending each $z^l$, $l\ne 0$, to $\frac{z^{l+1}}{l+1}$. In fact, because of the comments on convergence made above and the fact that both sides are regular at $z=0$, we can write the formal integral as an actual integral $\int_0^z dz$.
We perform this integration $m$ times, observing that if $n\ne 0$,
$ \int q_z^n\frac{1}{q_z}dq_z=\frac{1}{2\pi i}\int q_z^n dz$. We thus have
\begin{equation}\label{3}
g_1^m(z,\tau)=2\pi i \sum_{n\ne 0}n^{-m}q_z^n\partial_\tau^m(1-q^n)^{-1}=(2\pi i)^m\sum_{n=0}^\infty \frac{\partial_\tau^mG_{2n+2}(\tau)z^{2n+1+m}}{(2n+2)(2n+3)\cdots (2n+1+m)}.
\end{equation}
Since application of $\partial_\tau$ sends quasi-modular forms of weight $k$ to quasi-modular forms of weight $k+2$, each $\partial_\tau^mG_{2n+2}(\tau)$ is quasi-modular of weight $2(m+n+1)$ and depth $m$. Taking into account the factor of $z^{2n+1+m}$, each term in the sum thus transforms under modular transformations like a quasi-Jacobi form of weight $m+1$ and depth $m$.
The left hand side of equation \eqref{3} is equal to $g_1^m(z, \tau)$ so this establishes the modular transformation properties (\ref{quasimodular}) of $g_1^m(z, \tau)$ required in proving it is a quasi-Jacobi form of weight $m+1$.

Next we need to check the elliptic property (\ref{quasielliptic}). By construction, $g^m_1(z,\tau)$ is invariant under $z\mapsto z+\mu$. It is thus enough to check $z\mapsto z+\lambda \tau$.  Let us briefly review how this works for derivatives $\partial_\tau$. We will only discuss the case of index 0. If $f(z,\tau)$ satisfies (\ref{quasielliptic}), then we have for $F(z,\tau)=\partial_\tau f(z,\tau)$
\be\label{deltauelliptic}
F(z+\lambda \tau, \tau) = (\partial_\tau f)(z+\lambda\tau,\tau)
= \frac{d}{d\tau} f(z+\lambda\tau,\tau) - \lambda \partial_z f(z+\lambda\tau, \tau)\ .
\ee
Here $\partial_\tau$ and $\partial_z$ denote the partial derivatives with respect to the second and first argument respectively, whereas $d/d\tau$ is the total derivative with respect to $\tau$. Because $f(z+\lambda\tau,\tau)$ is given by the right hand side of (\ref{quasielliptic}), we see immediately that the right hand side of (\ref{deltauelliptic}) is also of the form of (\ref{quasielliptic}).

Now let us turn to $g^m_1(z,\tau)$. We will proceed by induction. For the base case $m=1$ we have
\begin{multline}
g^1_1(z+\lambda\tau,\tau)=\int dz (\partial_\tau P_1)(z+\lambda\tau,\tau) = \int dz (d_\tau P_1 -\lambda \partial_z P_1)(z+\lambda\tau,\tau) \\
= \int dz \partial_\tau P_1(z,\tau) -\lambda (P_1(z+\lambda\tau,\tau)+ C(\tau))
= g^1_1(z,\tau) -\lambda (P_1(z,\tau)+C(\tau)) -\lambda^2 2\pi i
\end{multline}
where we used (\ref{Pkelliptic}), and $C(\tau)$ is the (in general $\tau$ dependent) integration constant.

For $g^m_1(z,\tau)$, we can apply this argument recursively using $g^{m+1}_1(z,\tau)= \int dz \partial_\tau g^m_1(z,\tau)$. 
We then have
\begin{multline}\label{gm1elliptic}
g^{m+1}_1(z+\lambda \tau, \tau)=  \int dz (\partial_\tau g^m_1)(z+\lambda\tau,\tau) =\int dz (d_\tau g^m_1 -\lambda \partial_z g^m_1)(z+\lambda\tau,\tau) \\
=\int dz d_\tau g^m_1(z+\lambda\tau,\tau) -\lambda (g^m_1(z+\lambda\tau,\tau)+ C(\tau))\ .
\end{multline}
By the induction assumption, $g^m_1(z+\lambda\tau,\tau)$ satisfies (\ref{quasielliptic}), which implies that last expression in (\ref{gm1elliptic}) is of the correct form.

\end{proof}

\section{Some Combinatorial Background}
\subsection{Stirling Numbers and Eulerian Numbers}\label{Stirling_appendix}

Our main recursion relation (Proposition \ref{prop:rec}) involves Stirling numbers of the first kind. In order to prove Proposition \ref{diffop}, which is used in the proof of Proposition \ref{prop:rec}, we also need Stirling numbers of the second kind. The proof of Proposition \ref{prop:reccomm} uses Eulerian numbers. In this section we review these combinatorial numbers as well as the related identities that we need in the proofs of our results. For more details, see e.g. \cite{MR2078910} and \cite{MR2868112}.

Given $n\in \mathbb{Z},$ $n>0,$ we define
\be (x)_n=x(x-1)\cdots (x-n+1).
\ee
\begin{defn}Given $n,k\in \mathbb{Z}$, define {\emph{the Stirling numbers of the first kind}} $s(n,k),$ to be the coefficients in the expansion of $(x)_n$ in powers of $x$, if $n>0$ and $k\ge 0$, that is,
\be (x)_n=\sum_{k=0}^ns(n,k)x^k,
\ee and define $s(0,0)=1$, and if $k,n\in \mathbb{Z}$ such that $k<0$ and $n\ge 0$, we define $s(n,k)=0$.
\end{defn}

\begin{defn}\label{second_kind}Given $n, k\in \mathbb{N}$, define {\emph{the Stirling numbers of the second kind}} $S(n,k)$ to be the coefficients given by
\be \sum_{k=0}^n S(n,k)(x)_k=x^n,
\ee if $n>0$ and $k\ge 0$, and define $S(0,0)=1$ and $S(n,0)=0$ if $n>0.$
\end{defn}

Definition \ref{second_kind} implies that $S(n,k)=0$ for $n,k\in\mathbb{N}$ such that $k>n.$

Before defining Eulerian numbers, we first recall the definition of a descent of a permutation. 

\begin{defn}Given a permutation $(i_1, i_2, \cdots, i_n)$ of $(1,2, \cdots, n),$ an ascent of $(i_1, i_2, \cdots, i_n)$ is a position where $i_j<i_{j+1}.$ Similarly, a descent of $(i_1, i_2, \cdots, i_n)$ is a position where $i_{j+1}>i_j.$ 
\end{defn} We will sometimes need permutations of subtuples $(s_1, s_2, \cdots, s_j)$ of $(1,2,\cdots, n).$ In this case, ascents and descents are defined in the obvious way.
\begin{defn}
    Given $n,k\in \mathbb{N},$ the {\emph{Eulerian number}} $A(n,k)$ is the number of permutations of $n$ with exactly $k$ descents, or equivalently, the number of permutations of $n$ with exactly $k$ ascents. 
\end{defn} We next give some identities satisfied by the Stirling numbers and the Eulerian numbers:

Let $n,k\in \mathbb{N}$. Then
\begin{equation}
S(n,k)=\frac{1}{k!}\sum_{j=0}^k(-1)^{k-j}\binom{k}{j}j^n
\end{equation} 
and
\begin{equation}\label{6}
S(n+1,k+1)=\sum_{j=0}^n\binom{n}{j}S(j,k).
\end{equation}Given $k,n\in \mathbb{N}$ such that $0<k\le n,$
\begin{equation}\label{5}
    S(n,k)=kS(n-1,k)+S(n-1,k-1),
\end{equation} and given $n, k\in \mathbb{N}$ with $n\ge k,$
\be \label{SA_eq}S(n,k)=\frac{1}{k!}\sum_{j=0}^{k-1}A(n,j)\binom{n-j-1}{k-j-1}.
\ee
 We will also need the identity
\be\label{Ss_eq} \sum_{j=k}^nS(n,j)s(j, k)=\delta_{n,k},
\ee for $n,k\in \mathbb{N}, k\le n$, where $\delta_{k,n}$ denotes the Kronecker delta.
The following identity will be used in the proof of Proposition \ref{prop:reccomm}.
\begin{prop}\label{identity_comm}
 Let $u,t\in \mathbb{Z},$ such that $u>0$ and $0\le t\le u.$ Then
    \begin{equation}
    \sum_{D=0}^{u-1}A(u,D)\sum_{i=t-D-1}^{u-D-1}\binom{u-D-1}{i}\frac{1}{(i+D+1)!}s(i+D+1, t)=\delta_{u,t}.
    \end{equation}
    \end{prop}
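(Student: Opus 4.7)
The plan is to do a change of summation variable, swap the order of summation, and then apply the two identities \eqref{SA_eq} and \eqref{Ss_eq} in turn.

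First I would substitute $k=i+D+1$ in the inner sum over $i$. As $i$ runs from $t-D-1$ to $u-D-1$, the new index $k$ runs from $t$ to $u$, independently of $D$. Under this substitution $\binom{u-D-1}{i}=\binom{u-D-1}{k-D-1}$ and $\frac{1}{(i+D+1)!}s(i+D+1,t)=\frac{s(k,t)}{k!}$, so the left-hand side becomes
\begin{equation*}
\sum_{D=0}^{u-1} A(u,D) \sum_{k=t}^{u} \binom{u-D-1}{k-D-1}\frac{s(k,t)}{k!}.
\end{equation*}

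Second, since the ranges of $D$ and $k$ are now independent, I swap the order of summation and factor out $s(k,t)/k!$:
\begin{equation*}
\sum_{k=t}^{u}\frac{s(k,t)}{k!}\Bigl(\sum_{D=0}^{u-1} A(u,D)\binom{u-D-1}{k-D-1}\Bigr).
\end{equation*}
For $k\le u$ the binomial coefficient $\binom{u-D-1}{k-D-1}$ vanishes whenever $D\ge k$, so the inner sum can safely be truncated at $D=k-1$. Identity \eqref{SA_eq} then identifies it with $k!\,S(u,k)$.

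Third, cancelling $k!$, the left-hand side collapses to $\sum_{k=t}^{u} S(u,k)\, s(k,t)$. Since $S(u,k)=0$ for $k>u$ and $s(k,t)=0$ for $k<t$, this sum is exactly the one appearing in \eqref{Ss_eq}, and therefore equals $\delta_{u,t}$, as required.

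The only mildly delicate point is step three, where one must verify that padding the inner sum from $D=k-1$ up to $D=u-1$ introduces only zero terms (immediate from the vanishing of the binomial coefficient), and check the edge cases $t=0$ (only $k=0$ survives and both sides vanish for $u\ge 1$) and $t>u$ (the original range for $i$ is empty, matching $\delta_{u,t}=0$). Beyond that the proof is essentially a two-line reorganization of sums, so no substantive obstacle is expected.
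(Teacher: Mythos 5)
Your proposal is correct and follows essentially the same route as the paper's proof: shift the inner index to $k=i+D+1$, swap the sums, recognize the $D$-sum as $k!\,S(u,k)$ via \eqref{SA_eq}, and finish with the orthogonality relation \eqref{Ss_eq}. The minor bookkeeping differences (where the summation range is padded or truncated using vanishing of the binomial coefficient or of $s(k,t)$) do not change the argument.
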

\begin{proof}Shifting the index $i$ to $i+D+1,$ we have \begin{equation*}
\sum_{D=0}^{u-1}A(u,D)\sum_{i=t-D-1}^{u-D-1}\binom{u-D-1}{i}\frac{1}{(i+D+1)!}s(i+D+1, t)=\sum_{D=0}^{u-1}A(u,D)\sum_{i=t}^{u}\binom{u-D-1}{i-D-1}\frac{1}{i!}s(i, t).\end{equation*}
Since $s(i,t)=0$ if $i<t,$ the right hand side of this equality is equal to
\begin{equation*}\sum_{D=0}^{u-1}A(u,D)\sum_{i=0}^{u}\binom{u-D-1}{i-D-1}\frac{1}{i!}s(i, t).\end{equation*} Applying Equation \eqref{SA_eq} followed by Equation \eqref{Ss_eq}, this is then equal to
\begin{equation*}\sum_{i=0}^us(i,t)\frac{1}{i!}\sum_{D=0}^{u-1}A(u,D)\binom{u-D-1}{i-D-1}\\=\sum_{i=0}^us(i,t)S(u,i)=\delta_{u,t}, 
\end{equation*}which proves the result.
\end{proof}

\subsection{Some combinatorial identities}
In this section, we prove several identities that are used in the proofs of our main results. In particular, this section includes Proposition \ref{diffop}, and several results required to prove it. Proposition \ref{diffop} is important in the proof of our main recursion relation (Proposition \ref{prop:rec}). 

\begin{lem}\label{deriv_iden}
Let $n\in \mathbb{Z}_+$. Then \be\label{diff_id}
\partial_\tau^{n} (1-q^k)^{-1} = \frac{q^k}{1-q^k} \sum_{r=0}^{n-1} (2\pi i k)^{n-r} \binom{n}{r} \partial_\tau^r (1-q^k)^{-1}.
\ee
\end{lem}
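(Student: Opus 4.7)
The plan is to prove the identity by expanding both sides as $q$-series in $q$ (equivalently, using the geometric series for $|q|<1$) and reducing it to a simple telescoping combinatorial identity.

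First, from $q = e^{2\pi i \tau}$ we have $\partial_\tau q^{mk} = (2\pi i k m) q^{mk}$, and expanding $(1-q^k)^{-1} = \sum_{m \ge 0} q^{mk}$ term-by-term gives the uniform formula
\begin{equation*}
\partial_\tau^r (1-q^k)^{-1} = (2\pi i k)^r \sum_{m \ge 0} m^r q^{mk}, \qquad r \ge 0,
\end{equation*}
with the usual convention $0^0 = 1$ (the $m=0$ term vanishes for $r \ge 1$). In particular, the left-hand side of \eqref{diff_id} equals
\begin{equation*}
(2\pi i k)^n \sum_{N \ge 1} N^n q^{Nk}.
\end{equation*}

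Second, I would use $\frac{q^k}{1-q^k} = \sum_{j \ge 1} q^{jk}$ to rewrite the right-hand side as
\begin{equation*}
\sum_{r=0}^{n-1} (2\pi i k)^n \binom{n}{r} \sum_{\substack{j \ge 1\\ m \ge 0}} m^r\, q^{(j+m)k}
= (2\pi i k)^n \sum_{N \ge 1} q^{Nk} \sum_{m=0}^{N-1} \sum_{r=0}^{n-1} \binom{n}{r} m^r,
\end{equation*}
after collecting coefficients of $q^{Nk}$ via $N = j+m$ with $j \ge 1$ and $m \ge 0$.

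Comparing coefficients of $q^{Nk}$, the identity \eqref{diff_id} reduces to the purely combinatorial statement
\begin{equation*}
N^n = \sum_{m=0}^{N-1} \sum_{r=0}^{n-1} \binom{n}{r} m^r \qquad (N \ge 1, \; n \ge 1).
\end{equation*}
For this last step, the binomial theorem gives $\sum_{r=0}^{n-1} \binom{n}{r} m^r = (1+m)^n - m^n$, and summing over $m=0,\dots,N-1$ telescopes to $N^n - 0^n = N^n$.

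There is no real obstacle; the only care required is in handling the $r=0$ term so that the formula $\partial_\tau^r (1-q^k)^{-1} = (2\pi i k)^r \sum_{m \ge 0} m^r q^{mk}$ holds uniformly in $r$ (which is why the $m=0$ contribution in the double series is included and why the binomial identity telescopes cleanly).
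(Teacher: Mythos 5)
Your proof is correct, but it takes a genuinely different route from the paper's. The paper proves Lemma~\ref{deriv_iden} by induction on $n$: it differentiates the case-$n$ identity once more, handles the extra term coming from $\partial_\tau\frac{q^k}{1-q^k}$, re-indexes, and merges binomial coefficients via the Pascal identity $\binom{n}{r-1}+\binom{n}{r}=\binom{n+1}{r}$. You instead expand in powers of $q^k$, using the closed form $\partial_\tau^r(1-q^k)^{-1}=(2\pi i k)^r\sum_{m\ge0}m^r q^{mk}$, and reduce \eqref{diff_id} to the coefficientwise statement $\sum_{m=0}^{N-1}\sum_{r=0}^{n-1}\binom{n}{r}m^r=N^n$, which telescopes via $\sum_{r=0}^{n-1}\binom{n}{r}m^r=(m+1)^n-m^n$. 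Your argument is more elementary (only the binomial theorem, no induction on the differentiated identity) and makes transparent what the lemma says about $q$-expansion coefficients; the paper's induction stays at the level of the rational functions $\frac{q^k}{1-q^k}$ and feeds naturally into the Stirling-number bookkeeping of Proposition~\ref{A-Stirling_2} and Proposition~\ref{diffop}. One small point you should make explicit: the geometric expansion $(1-q^k)^{-1}=\sum_{m\ge0}q^{mk}$ converges only when $|q^k|<1$, i.e.\ for $k>0$ on the upper half-plane. This is not a gap, for two independent reasons: the paper's stated convention is precisely to read $(1-q^k)^{-1}$ as the formal series $\sum_{i\ge0}q^{ki}$, under which your termwise differentiation and regrouping by powers of $q^k$ are purely formal (and legitimate) operations; alternatively, both sides of \eqref{diff_id} are rational functions of $q^k$, so the identity established on $|q^k|<1$ extends to all $q^k\neq1$, covering $k<0$ as well.
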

\begin{proof}
We induct on $n$. Equation \eqref{diff_id} is trivially true in the case that $n=1$. Suppose it is true for some $n\ge 1$. Then
\begin{multline*}
\partial_\tau^{n+1}(1-q^k)^{-1}=\partial_\tau(\partial_\tau^{n}(1-q^k)^{-1})=\partial_\tau\left( \frac{q^k}{1-q^k} \sum_{r=0}^{n-1} (2\pi i k)^{n-r} \binom{n}{r} \partial_\tau^r (1-q^k)^{-1}\right)\\
=\frac{q^k}{1-q^k}\left(\sum_{r=0}^{n-1} (2\pi i k)^{n-r} \binom{n}{r} \partial_\tau^{r+1} (1-q^k)^{-1}\right)+2\pi i k\left(\frac{q^k}{(1-q^k)^2}\right)\sum_{r=0}^{n-1} (2\pi i k)^{n-r} \binom{n}{r} \partial_\tau^r (1-q^k)^{-1}\\
=\frac{q^k}{1-q^k}\left(\sum_{r=0}^{n-1} (2\pi i k)^{n-r} \binom{n}{r} \partial_\tau^{r+1} (1-q^k)^{-1}
+2\pi i k
\left(1+\frac{q^k}{1-q^k}\right)\sum_{r=0}^{n-1} (2\pi i k)^{n-r} \binom{n}{r} \partial_\tau^r (1-q^k)^{-1}\right)\\
=\frac{q^k}{1-q^k}\left(\sum_{r=0}^{n-1} (2\pi i k)^{n-r} \binom{n}{r} \partial_\tau^{r+1} (1-q^k)^{-1}+\sum_{r=0}^{n-1} (2\pi i k)^{n+1-r} \binom{n}{r} \partial_\tau^r (1-q^k)^{-1}
+ (2\pi i k) \left(\partial_\tau^n(1-q^k)^{-1}\right)\right)\\
=\frac{q^k}{1-q^k}\left(\sum_{r=1}^{n} (2\pi i k)^{n+1-r} \binom{n}{r-1} \partial_\tau^{r} (1-q^k)^{-1}+\sum_{r=0}^{n-1} (2\pi i k)^{n+1-r} \binom{n}{r} \partial_\tau^r (1-q^k)^{-1}
+ (2\pi i k) \left(\partial_\tau^n(1-q^k)^{-1}\right)\right)\\
=\frac{q^k}{1-q^k}\left(\sum_{r=0}^{n-1} (2\pi i k)^{n+1-r} \binom{n+1}{r} \partial_\tau^{r} (1-q^k)^{-1}
+ (2\pi i k)(n+1) \left(\partial_\tau^n(1-q^k)^{-1}\right)\right)\\
=\frac{q^k}{1-q^k}\left(\sum_{r=0}^{n} (2\pi i k)^{n+1-r} \binom{n+1}{r} \partial_\tau^{r} (1-q^k)^{-1}\right)\ 
\end{multline*}
where we used the Pascal triangle identity $\binom{n}{r-1}+\binom{n}{r}=\binom{n+1}{r}$.
\end{proof}

\begin{prop}\label{A-Stirling_2}
For all $k,m\in \mathbb{Z}$ such that $k\ne 0$ and $m\ge 0$,
\be\label{A}
\partial_\tau^m (1-q^k)^{-1}
= (2\pi i k)^m \sum_{i=0}^m i!S(m,i) \frac{1}{1-q^k}\left(\frac{q^k}{1-q^k}\right)^i.
\ee 
\end{prop}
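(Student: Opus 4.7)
The plan is to prove the identity by induction on $m$, with the key combinatorial ingredient being the Stirling recurrence (\ref{5}).

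The base case $m=0$ is immediate: the right-hand side reduces to $0!\,S(0,0)\,(1-q^k)^{-1} = (1-q^k)^{-1}$ since $S(0,0)=1$. For the inductive step, I would assume the identity for some $m \geq 0$ and apply $\partial_\tau$ to both sides. The useful substitution is $u := q^k/(1-q^k)$, so that the summand is $\frac{1}{1-q^k}u^i$, and a direct computation gives
\[
\partial_\tau\!\left[\frac{1}{1-q^k}u^i\right] = (2\pi i k)\,\frac{1}{1-q^k}\Bigl[i\,u^i + (i+1)\,u^{i+1}\Bigr],
\]
since $\partial_\tau q^k = (2\pi i k)q^k$ and a short product-rule computation confirms the coefficient structure.

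Plugging this into the hypothesis yields
\[
\partial_\tau^{m+1}(1-q^k)^{-1} = (2\pi i k)^{m+1}\,\frac{1}{1-q^k}\sum_{i=0}^{m} i!\,S(m,i)\bigl[i\,u^i + (i+1)\,u^{i+1}\bigr].
\]
The next step is to reindex the second piece by setting $j = i+1$, so that $(i+1)! = j!$ appears naturally, and then collect like powers of $u$. After doing so, the coefficient of $u^j$ is
\[
j!\bigl[j\,S(m,j) + S(m,j-1)\bigr],
\]
with the conventions $S(m,-1) = S(m,m+1) = 0$ ensuring that the boundary terms $j=0$ and $j=m+1$ are handled correctly. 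Applying the Stirling recurrence $S(m+1,j) = j\,S(m,j) + S(m,j-1)$ from equation (\ref{5}) then gives exactly the right-hand side at level $m+1$.

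The main thing to be careful about is the bookkeeping at the boundaries of the sums: specifically, verifying that after shifting indices the terms at $j=0$ (which must vanish since $S(m+1,0)=0$ for $m+1 > 0$) and $j=m+1$ (which must produce $(m+1)!\,S(m+1,m+1)\,u^{m+1} = (m+1)!\,u^{m+1}$) both emerge automatically from the recurrence with the stated conventions. Beyond that, the argument is a clean two-line induction once the derivative $\partial_\tau(u^i/(1-q^k))$ is computed. Alternatively, one could bootstrap the identity from Lemma~\ref{deriv_iden}, but the direct induction above seems shorter.
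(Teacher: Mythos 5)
Your proof is correct, but it takes a different route from the paper's. You run a simple induction in $m$: apply $\partial_\tau$ once to the identity at level $m$, compute $\partial_\tau\bigl[\tfrac{1}{1-q^k}u^i\bigr]=(2\pi i k)\tfrac{1}{1-q^k}\bigl[i\,u^i+(i+1)u^{i+1}\bigr]$ with $u=q^k/(1-q^k)$, reindex, and close the induction using only the basic recurrence $S(m+1,j)=jS(m,j)+S(m,j-1)$ (the paper's equation \eqref{5}); your boundary bookkeeping at $j=0$ and $j=m+1$ is sound --- indeed the $j=0$ coefficient is literally $0\cdot S(m,0)=0$ since the shifted piece only produces powers $u^{j}$ with $j\geq 1$, so the extra convention $S(m,-1)=0$ is not even strictly needed, and $S(m,m+1)=0$ is already part of the paper's Definition of $S(n,k)$. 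The paper instead proves the statement by a strong induction routed through Lemma~\ref{deriv_iden}, which expands $\partial_\tau^{m}(1-q^k)^{-1}$ in terms of \emph{all} lower derivatives $\partial_\tau^r(1-q^k)^{-1}$, and then needs the additional identity $\sum_{r=0}^{m-1}\binom{m}{r}S(r,i)=S(m+1,i+1)-S(m,i)=(i+1)S(m,i+1)$ combining equations \eqref{6} and \eqref{5}. Your argument is shorter and more elementary in that it bypasses Lemma~\ref{deriv_iden} and identity \eqref{6} entirely; the paper's route has the side benefit of establishing Lemma~\ref{deriv_iden}, which it states separately, but that lemma is not needed elsewhere for this proposition, so nothing is lost by your approach.
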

\begin{proof}
We proceed by induction in $m$. The base case $m=0$ is obvious. For $m\ge1$ we use lemma~\ref{deriv_iden} to write
\begin{equation}
\partial_\tau^{m}(1-q^k)^{-1}
=\frac{(2\pi ik)^m}{1-q^k}\sum_{r=0}^{m-1}\binom{m}{r}\sum_{i=0}^ri!S(r,i)\left(\frac{q^k}{1-q^k}\right)^{i+1}
=\frac{(2\pi ik)^m}{1-q^k}\sum_{r=0}^{m-1}\sum_{i=0}^{m-1}\binom{m}{r}i!S(r,i)\left(\frac{q^k}{1-q^k}\right)^{i+1}
\end{equation} 
where we used the fact that $S(m,n)=0$ if $m<n$.
Applying Equation \ref{6} and then Equation \ref{5}, we have
\begin{equation}\label{recursS}
\sum_{r=0}^{m-1}\binom{m}{r}S(r,i)=S(m+1,i+1)-S(m,i)=(i+1)S(m,i+1).
\end{equation} 
This gives
\be
\partial_\tau^{m}(1-q^k)^{-1}
=\frac{(2\pi ik)^m}{1-q^k}\sum_{i=0}^{m-1}(i+1)!S(m,i+1)\left(\frac{q^k}{1-q^k}\right)^{i+1}
= \frac{(2\pi ik)^m}{1-q^k}\sum_{i=0}^{m}i!S(m,i)\left(\frac{q^k}{1-q^k}\right)^{i}
\ee
where we used the fact that $S(m,0)=0$ for $m>0$.

\end{proof}

\begin{prop}\label{diffop}
For all $k,l\in \mathbb{Z}$ such that $k\ne 0$ and $l\ge 0$,
\be\label{B}
\frac{1}{1-q^k}\left(\frac{q^k}{1-q^k}\right)^l
= \sum_{m=0}^l \frac{1}{l!}(2\pi ik)^{-m}s(l,m)\partial_\tau^m (1-q^k)^{-1}\ .
\ee 
\end{prop}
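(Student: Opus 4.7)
My plan is to obtain this identity by inverting the relation established in Proposition~\ref{A-Stirling_2}. Set
\[
u_l := \frac{1}{1-q^k}\left(\frac{q^k}{1-q^k}\right)^l, \qquad v_m := (2\pi ik)^{-m}\partial_\tau^m (1-q^k)^{-1}.
\]
With this notation, Proposition~\ref{A-Stirling_2} reads $v_m = \sum_{i=0}^m i!\, S(m,i)\, u_i$, so the identity (\ref{B}) that we want amounts to the claim $u_l = \sum_{m=0}^l \tfrac{1}{l!} s(l,m) v_m$. This is a statement about inverting a lower-triangular linear system, and the natural tool is the orthogonality of the Stirling numbers.

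The main computation would be to substitute the expression for $v_m$ into the proposed formula, switch the order of summation, and collect terms:
\[
\sum_{m=0}^l \frac{1}{l!} s(l,m) v_m
= \sum_{m=0}^l \frac{1}{l!} s(l,m) \sum_{i=0}^m i!\, S(m,i)\, u_i
= \sum_{i=0}^l \frac{i!}{l!}\, u_i \sum_{m=i}^l s(l,m)\, S(m,i).
\]
The inner sum equals $\delta_{l,i}$: this is the ``dual'' orthogonality companion to the identity \eqref{Ss_eq} cited in the paper. Since the matrices $(S(n,k))_{n,k\ge 0}$ and $(s(n,k))_{n,k\ge 0}$ are lower triangular with $1$'s on the diagonal and are mutually inverse, equation \eqref{Ss_eq} ($SA \cdot s = I$) is equivalent to $s \cdot S = I$, so $\sum_{m=i}^l s(l,m) S(m,i) = \delta_{l,i}$. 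Plugging this back in collapses the $i$-sum to the single term $i=l$, giving $u_l$, as required.

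The main (minor) obstacle is simply justifying the orthogonality in the opposite order from how it is stated in \eqref{Ss_eq}; this is done in a single line by the triangular-matrix inverse argument above, so no genuine difficulty arises. The rest is bookkeeping with the factorials $i!/l!$, which evaluate to $1$ precisely when $i=l$.
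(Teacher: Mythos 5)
Your proof is correct and follows essentially the same route as the paper: substitute Proposition~\ref{A-Stirling_2} into the right-hand side, interchange the sums, and collapse via the Stirling orthogonality $\sum_{m} s(l,m)S(m,i)=\delta_{l,i}$. Your extra remark justifying the transposed orthogonality from \eqref{Ss_eq} via the unit-lower-triangular inverse argument is a small refinement the paper states without comment (note the stray ``$SA$'' in your parenthetical should read $S\cdot s=I$), but the argument is otherwise identical.
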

\begin{proof}
This follows from proposition~\ref{A-Stirling_2} and the fact that the Stirling numbers $s$ and $S$ are inverses of each other:
\begin{multline}\sum_{m=0}^l \frac{1}{l!}(2\pi ik)^{-m}s(l,m)\partial_\tau^m (1-q^k)^{-1}
=\frac1{l!}\sum_{m=0}^l \sum_{i=0}^l s(l,m) i!S(m,i)\frac{1}{1-q^k}\left(\frac{q^k}{1-q^k}\right)^i
\\=\frac1{l!}\sum^l_{i=0}i!\delta_{l,i}\frac{1}{1-q^k}\left(\frac{q^k}{1-q^k}\right)^i
= \frac{1}{1-q^k}\left(\frac{q^k}{1-q^k}\right)^l
\end{multline}
where we used that $S(m,i)=0$ if $m<i$ and $\sum_{m=0}^l s(l,m)S(m,i)=\delta_{l,i}$\ .

\end{proof}

\subsection{Partitions of permutations}\label{permutations_appendix}
Finally we introduce some identities on partitions of permutations.
\begin{defn}\label{Ct}
Let $r\in \mathbb{N}$ and let $\vec u$ be a permutation of a subtuple of $(1,\cdots, r)$. Define $C_{\vec u}$ to be
\begin{equation}C_{\vec u}=\sum_{p}\left(\frac{q^k}{1-q^k}\right)^{l_p}
\end{equation}where the sum is over all partitions $p$ of $\vec u$ into $l_p$ nonempty disjoint subtuples $\vec{u_1}\cup\cdots \cup {\vec u_{l_p}}=\vec u$ such that the entries in each subtuple are strictly increasing. If $\vec u=\emptyset,$ we define $C_{\vec u}=1.$
\end{defn}

As an example of definition \ref{Ct}, if we take $\vec u=(2, 3, 1, 4),$ then
\begin{equation*}
C_{(2,3,1,4)}=\left(\frac{q^k}{1-q^k}\right)^4+2\left(\frac{q^k}{1-q^k}\right)^3+\left(\frac{q^k}{1-q^k}\right)^2,
\end{equation*}since 
\begin{equation*}
(2, 3, 1, 4)=(2)\cup (3)\cup (1)\cup (4)=(2)\cup (3)\cup (1, 4)=(2,3)\cup (1)\cup (4)=(2,3)\cup (1,4).
\end{equation*}

\begin{lem}\label{LemCu}
Let $\vec u=(a_1,\cdots, a_n)$ be a permutation of a subtuple of $(1,\cdots, r)$ such that $a_1<\cdots<a_n.$ Then 
\begin{equation}\label{Cu}C_{\vec u}=\sum_{j=0}^{n-1}\binom{n-1}{j}\left(\frac{q^k}{1-q^k}\right)^{j+1}.
\end{equation}
\end{lem}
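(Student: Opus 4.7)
The plan is to unpack Definition~\ref{Ct} and observe that when $\vec u$ is already strictly increasing, the constraint in the definition collapses to a pure composition count. First I would read the equation $\vec{u_1}\cup\cdots\cup\vec{u_{l_p}}=\vec u$ as an ordered concatenation (consistent with the earlier notation $\cup$ for concatenation of tuples), so that the $\vec{u_i}$ are exactly the consecutive segments carved out of $\vec u$ by selecting some subset of the $n-1$ internal gaps as cuts. The worked example $(2,3,1,4)$ immediately following Definition~\ref{Ct} is consistent with this reading: its four listed decompositions correspond to the four ways of optionally cutting at the two ``safe'' gaps (between $2,3$ and between $1,4$), with the descent gap between $3$ and $1$ being forced.

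Under this interpretation, the hypothesis $a_1<\cdots<a_n$ is precisely what makes every consecutive segment of $\vec u$ strictly increasing, so the strict-increase condition in Definition~\ref{Ct} becomes automatic for every placement of cuts. Consequently $C_{\vec u}$ counts, weighted by $\left(\frac{q^k}{1-q^k}\right)^{l_p}$, all compositions of $n$ into $l_p$ positive parts. Since there are $\binom{n-1}{l_p-1}$ such compositions, summing over $l_p\ge 1$ and reindexing $j=l_p-1$ yields
\[
C_{\vec u}=\sum_{l=1}^{n}\binom{n-1}{l-1}\left(\frac{q^k}{1-q^k}\right)^{l}=\sum_{j=0}^{n-1}\binom{n-1}{j}\left(\frac{q^k}{1-q^k}\right)^{j+1},
\]
which is the claimed identity.

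The only substantive point is verifying that ``partition whose ordered concatenation equals $\vec u$'' really forces each $\vec{u_i}$ to be a consecutive segment of $\vec u$ rather than, say, an interleaved strictly increasing subsequence; once this combinatorial unpacking is in hand, the remaining step is the standard binomial identity for compositions of $n$ into a fixed number of parts, and no further work is required.
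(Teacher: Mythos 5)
Your proposal is correct and follows essentially the same route as the paper: since $\cup$ denotes concatenation, the admissible partitions are exactly choices of cut positions among the $n-1$ internal gaps, all of which are allowed because $\vec u$ is strictly increasing, giving $\binom{n-1}{j}$ partitions into $j+1$ pieces. The interpretive point you flag (segments versus interleaved subsequences) is settled by the paper's definition of $\cup$ as concatenation, just as you argue.
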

\begin{proof}We can partition ${\vec u}$ into a union of $j+1$ nonempty disjoint subtuples ${\vec u_1}\cup \cdots \cup {\vec u_l}$ by splitting ${\vec u}$ exactly $j$ times. Since the entries of ${\vec u}$ are strictly increasing, the subtuples in any partition of ${\vec u}$ also have increasing entries. Since there are $n-1$ choices for where to split ${\vec u}$ (after $a_1, \cdots, a_{n-2}$ or $a_{n-1}$), there are $\binom{n-1}{j}$ ways to partition ${\vec u}$ into a union of $j+1$ nonempty disjoint subtuples. The result follows.
\end{proof}

Since the formula given by equation \eqref{Cu} depends only on the length $n$ of the subtuple ${\vec u}$ and not on the entries of ${\vec u}$, we define
\begin{equation}\label{Cn}C_n=\sum_{j=0}^{n-1}\binom{n-1}{j}\left(\frac{q^k}{1-q^k}\right)^{j+1}.
\end{equation}

\begin{prop}\label{unique_part}
    Let ${\vec u}$ be a permutation of a subtuple of $(1,\cdots r)$. Then there is a unique partition of ${\vec u}$ into nonempty disjoint subtuples with increasing entries, such that the lengths of the subtuples are maximal. Let ${\vec u_1}\cup \cdots \cup {\vec u_n}={\vec u}$ be this partition for ${\vec u}$. For each $i$, $1\le i\le n$, let $r_i$ denote the length of ${\vec u_i}.$
    Then
    \begin{equation}\label{Cprod}
    C_{\vec u}=\prod_{i=1}^nC_{r_i},
    \end{equation}where $C_{r_i}$ is given by Equation \eqref{Cn}.
\end{prop}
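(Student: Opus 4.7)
The plan is to first identify the unique maximal partition of $\vec u$ by locating its descents, and then use a refinement bijection to factorize the sum defining $C_{\vec u}$.

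For the uniqueness statement, I would observe that writing $\vec u = \vec u_1 \cup \cdots \cup \vec u_n$ as a concatenation of nonempty subtuples with strictly increasing entries amounts to choosing a set of positions at which to split the tuple $\vec u$ into contiguous pieces, subject to the constraint that each piece is strictly increasing. Such a split is forced at every descent of $\vec u$ (otherwise a descent would lie inside some $\vec u_i$, contradicting that $\vec u_i$ is strictly increasing), while splits at ascent positions are optional. The partition with maximal block lengths is thus the one that splits precisely at the descents, and since this determines all splits it is unique.

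Next I would establish a bijection between valid partitions of $\vec u$ and tuples of valid partitions of the maximal blocks $\vec u_i$. Writing the maximal partition as $\vec u = \vec u_1 \cup \cdots \cup \vec u_n$, any other strictly increasing partition $p$ of $\vec u$ is obtained by further splitting each $\vec u_i$ at some subset of its interior positions (all of which are ascents, since $\vec u_i$ is strictly increasing). These choices for different $i$ are independent, giving a bijection $p \leftrightarrow (p_1, \ldots, p_n)$ where $p_i$ is the induced strictly increasing partition of $\vec u_i$. Under this bijection the total number of parts decomposes additively as $l_p = \sum_{i=1}^n l_{p_i}$.

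The sum defining $C_{\vec u}$ then factorizes directly:
\begin{equation*}
C_{\vec u} \;=\; \sum_p \left(\frac{q^k}{1-q^k}\right)^{l_p} \;=\; \prod_{i=1}^n \, \sum_{p_i} \left(\frac{q^k}{1-q^k}\right)^{l_{p_i}} \;=\; \prod_{i=1}^n C_{\vec u_i},
\end{equation*}
and a final appeal to Lemma \ref{LemCu} (which shows $C_{\vec u_i}$ depends only on the length $r_i$ of $\vec u_i$) gives $C_{\vec u_i} = C_{r_i}$ and hence the claim. The argument is essentially combinatorial bookkeeping; the only real content is the observation that descents correspond to mandatory splits while ascents correspond to free binary choices, so the main potential obstacle is writing the refinement bijection carefully enough that the factorization of $l_p$ is unambiguous.
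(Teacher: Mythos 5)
Your proof is correct and follows essentially the same route as the paper: uniqueness because splits are forced exactly at the descents, and the product formula because every admissible partition is an independent refinement of the maximal (descent) partition within each block, so the exponents $l_p$ add and the sum defining $C_{\vec u}$ factorizes, with Lemma \ref{LemCu} giving $C_{\vec u_i}=C_{r_i}$. The paper phrases the factorization as matching coefficients of powers of $\frac{q^k}{1-q^k}$ rather than as an explicit bijection, but this is the same argument.
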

\begin{proof}In order to prove the first statement of the proposition, we observe that a partition as described in the statement of the proposition is formed by splitting ${\vec u}$ a minimal number of times: Given ${\vec u}=(a_1,\cdots, a_r),$ we split ${\vec u}$ between $a_i$ and $a_{i+1}$ only when $a_{i}>a_{i+1}.$ Since we only split ${\vec u }$ where necessary to ensure the resulting subtuples are strictly increasing, the subtuples are of maximal length and any other partition of ${\vec u}$ is formed from this partition by splitting it further.

Let ${\vec u_1}\cup \cdots \cup {\vec u_n}={\vec u}$ be the partition of ${\vec u}$ described in the previous paragraph. Each $C_{\vec{u_i}}$ is a polynomial in $\frac{q^k}{1-q^k}$ such that the coefficient of $\left(\frac{q^k}{1-q^k}\right)^{j+1}$ is equal to the number of ways of splitting ${\vec u_i}$ into $j+1$ disjoint pieces. It follows that $\prod_{i=1}^nC_{r_i}$ is a polynomial in $\frac{q^k}{1-q^k}$ such that the coefficient of $\left(\frac{q^k}{1-q^k}\right)^j$ is equal to the number of ways of splitting $\vec{u}$ into $j$ increasing subtuples.
\end{proof}

\begin{prop}\label{Cprop}
Let $\vec u$ be a permutation of a nonempty subtuple of $(1,\cdots,r).$ Let $u$ denote the length of $\vec{u}$. Let $des(\vec u)$ denote the number of descents of $\vec u$, that is, if $\vec{u}=(u_1, \cdots, u_t)$ then $des(\vec u)$ is the number of $j$, $1\le j\le t-1$ such that $u_{j+1}<u_j$. Then
\begin{equation}
C_{\vec u}=\sum_{i=0}^{u-des(\vec u)-1}\binom{u-{des(\vec u)}-1}{i}\left(\frac{q^k}{1-q^k}\right)^{i+{des(\vec u)+1}}.
\end{equation}
\end{prop}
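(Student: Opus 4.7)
The plan is to combine Proposition~\ref{unique_part} with the binomial theorem. First I would observe that the maximal increasing partition $\vec u = \vec u_1 \cup \cdots \cup \vec u_n$ is obtained by cutting $\vec u$ precisely at its descents, so the number of blocks is $n = \mathrm{des}(\vec u) + 1$, while the block lengths satisfy $r_1 + r_2 + \cdots + r_{\mathrm{des}(\vec u)+1} = u$. By Proposition~\ref{unique_part} we therefore have
\begin{equation*}
C_{\vec u} = \prod_{i=1}^{\mathrm{des}(\vec u)+1} C_{r_i}.
\end{equation*}

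Next I would simplify each factor by setting $x = q^k/(1-q^k)$ and applying the binomial theorem to the definition \eqref{Cn}:
\begin{equation*}
C_{r_i} = \sum_{j=0}^{r_i-1} \binom{r_i-1}{j} x^{j+1} = x\,(1+x)^{r_i-1}.
\end{equation*}
Taking the product over $i$ and using $\sum_i (r_i - 1) = u - (\mathrm{des}(\vec u)+1)$ gives
\begin{equation*}
C_{\vec u} = x^{\mathrm{des}(\vec u)+1}\,(1+x)^{u - \mathrm{des}(\vec u) - 1}.
\end{equation*}
One final application of the binomial theorem to $(1+x)^{u-\mathrm{des}(\vec u)-1}$ and re-substituting $x = q^k/(1-q^k)$ yields exactly the claimed formula.

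There is no real obstacle here: the only thing that requires a moment of care is the counting step identifying $n$ with $\mathrm{des}(\vec u)+1$, which is immediate from the description in the proof of Proposition~\ref{unique_part} (one splits $\vec u = (a_1,\ldots,a_u)$ between $a_i$ and $a_{i+1}$ exactly when $a_i > a_{i+1}$). Everything else is a direct binomial identity, so the proof will be short.
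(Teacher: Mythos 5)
Your proof is correct and follows essentially the same route as the paper's: both combine Proposition~\ref{unique_part} with the rewriting $C_n = \frac{q^k}{1-q^k}\bigl(1+\frac{q^k}{1-q^k}\bigr)^{n-1}$, identify the number of blocks with $des(\vec u)+1$ and use $\sum_i r_i = u$, and finish with the binomial theorem. No gaps.
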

\begin{proof}Let $\vec{u}$ be a permutation of a subtuple of $(1,\cdots, r)$ and let $\vec{u}_1\cup \cdots\cup \vec{u}_n$ be the unique partition of $\vec{u}$ described in Proposition \ref{Cprop}, where the length of each ${\vec{u}_i}$ is denoted $r_i$. Observe that equation \eqref{Cn} in lemma \ref{LemCu} can be rewritten as 
\begin{equation}C_n=\left(1+\frac{q^k}{1-q^k}\right)^{n-1}\frac{q^k}{1-q^k}.
\end{equation} Plugging this equality into equation \ref{Cprod} in proposition \ref{unique_part}, we have
\begin{equation}\label{C_u_prod}
C_{\vec{u}}=\prod_{i=1}^{n}\left(\left(1+\frac{q^k}{1-q^k}\right)^{r_i-1}\frac{q^k}{1-q^k}\right).
\end{equation} Next observe that by the construction given in proposition \ref{unique_part}, the number $n$ of $\vec{u}_i$s is equal to $des(\vec{u})+1$, where $des(\vec{u})$ denotes the number of descents of $\vec{u}$. Also observe that $\sum_{i=1}^{des(\vec{u})+1}r_i=u,$ the length of $\vec{u}$. By equation \eqref{C_u_prod}, we therefore have
\begin{equation*}
C_{\vec{u}}=\left(1+\frac{q^k}{1-q^k}\right)^{u-des(\vec{u})-1}\left(\frac{q^k}{1-q^k}\right)^{des(\vec{u})+1}=\sum_{i=0}^{u-des(\vec{u})-1}\binom{u-des(\vec{u})-1}{i}\left(\frac{q^k}{1-q^k}\right)^{i+des(\vec{u})+1}.
\end{equation*}
\end{proof}

\section{Proof of the main recursion relation}
Here we collect various lemmas used in the proof of recursion relation proposition~\ref{prop:rec}.
\begin{lem}
For $k\neq 0$ we have 
\begin{multline}\label{receq1}
\zeta_1^{-k}F(b_0^{\vec s}a_k^1; (a^2, \zeta_2), \cdots, (a^n, \zeta_n); \tau)\\
=\frac{1}{1-q^k}\sum_{j=2}^n\left(\frac{\zeta_j}{\zeta_1}\right)^k\sum_{m=0}^\infty\binom{h_1-1+k}{m}F(b_0^{\vec s}; (a^2, \zeta_2), \cdots, (a_{m-h_1+1}^{1}a^j, \zeta_j), \cdots, (a^n, \zeta_n); \tau)\\
+\zeta_1^{-k}\frac{q^k}{1-q^k}\sum_{\emptyset \ne \vec t\subset \vec s}(2\pi i)^{t}F(b_0^{\vec s-\vec t}d_k^{\vec t}(a^1);(a^2, \zeta_2),\cdots, (a^n, \zeta_n);\tau)\ .
\end{multline}
\end{lem}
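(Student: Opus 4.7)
The plan is to prove this identity by a cyclicity-of-trace argument combined with the standard VOA commutator formulas. Starting from $\Tr b_0^{\vec s}a_k^1\prod_{j=2}^n Y(\zeta_j^{L_0}a^j,\zeta_j)q^{L_0}$, I would first push $a_k^1$ to the right past every vertex operator. Using the standard commutator formula $[a^1(p), Y(b,z)]=\sum_{i\geq 0}\binom{p}{i}z^{p-i}Y(a^1(i)b,z)$ with $p=k+h_1-1$, together with the identity $a^1(i)z^{L_0}a^j = z^{i+1-h_1}z^{L_0}a^1(i)a^j$ (which follows from $a^1(i)$ having $L_0$-degree $h_1-1-i$), the commutator with each vertex operator reduces to
\begin{equation*}
[a_k^1, Y(\zeta_j^{L_0}a^j,\zeta_j)] = \sum_{i\geq 0}\binom{k+h_1-1}{i}\zeta_j^k\, Y(\zeta_j^{L_0}a^1(i)a^j,\zeta_j).
\end{equation*}
Multiplying by $\zeta_1^{-k}$ converts these into the contraction terms on the right-hand side, with the expected prefactor $(\zeta_j/\zeta_1)^k$ and insertion $a^1_{m-h_1+1}a^j=a^1(m)a^j$.

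Having pushed $a_k^1$ past all of the $Y_j$'s, I would then apply cyclicity of the trace to move it back to the left of $q^{L_0}$. The commutation $q^{L_0}a_k^1 = q^{-k}a_k^1 q^{L_0}$ (from $[L_0,a_k^1]=-k\,a_k^1$) introduces an overall factor $q^k$, so one finds $\Tr b_0^{\vec s}\prod_j Y_j \,a_k^1 q^{L_0} = q^k\Tr a_k^1 b_0^{\vec s}\prod_j Y_j q^{L_0}$. Finally, commuting $a_k^1$ past $b_0^{\vec s}$ using Lemma~\ref{lem:comm} produces both the original ordering $b_0^{\vec s}a_k^1$ (reconstituting the left-hand side multiplied by $q^k$) and the sum $\sum_{\emptyset\neq\vec t\subset\vec s}(2\pi i)^t b_0^{\vec s-\vec t}d_k^{\vec t}(a^1)$, which becomes the second term on the right-hand side.

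Collecting everything yields an equation of the schematic form $F = q^k F + q^k\cdot(\text{commutator with }b_0^{\vec s}) + (\text{contractions with }Y_j\text{'s})$, and solving for $F$ supplies the overall $1/(1-q^k)$ denominator, with prefactor $1$ on the contraction terms and prefactor $q^k$ on the commutator terms. This is exactly the form asserted in (\ref{receq1}).

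The main obstacle will be the weight-tracking in the first step: correctly interleaving $\zeta_j^{L_0}$ with $a^1(i)$ so that all powers of $\zeta_j$ collapse cleanly to $\zeta_j^k$, which is what ensures that the combination $\zeta_1^{-k}\zeta_j^k$ gives the clean $(\zeta_j/\zeta_1)^k$ in the final result. Everything else is standard cyclicity together with the direct application of Lemma~\ref{lem:comm}. The argument parallels the derivation of equation (B.16) in \cite{Gaberdiel:2012yb}, generalized to arbitrary index tuples $\vec s$ rather than just powers of a single zero mode.
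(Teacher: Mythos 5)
Your proposal is correct and follows essentially the same route as the paper: commute $a_k^1$ rightward past the vertex operators via the commutator formula $[a_k^1,Y(\zeta_j^{L_0}a^j,\zeta_j)]=\sum_m\binom{h_1-1+k}{m}\zeta_j^k Y(\zeta_j^{L_0}a^1_{m-h_1+1}a^j,\zeta_j)$, pick up the factor $q^k$ from $a^1_k q^{L_0}=q^kq^{L_0}a^1_k$, use cyclicity of the trace and Lemma~\ref{lem:comm} to handle $[a_k^1,b_0^{\vec s}]$, and solve for the correlator to produce the $\frac{1}{1-q^k}$ and $\frac{q^k}{1-q^k}$ prefactors. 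The only cosmetic difference is that you derive the vertex-operator commutator from the standard mode formula with the $\zeta_j^{L_0}$ bookkeeping made explicit, whereas the paper simply quotes it.
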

\begin{proof}
The proof is analogous to the proof of equation (B.17) in  \cite{Gaberdiel:2012yb}. 
Wlog we assume that $a^1$ is homogenous with weight $h_1$; the general case then follows by linearity.
The idea of the proof is to commute the mode $a^1_k$ all the way through to the right and then use cyclicity of the trace. The commutators can be expressed as
\be\label{comma}
[a_k^1, Y(\zeta_j^{L_0}a^j, \zeta_j)]=\sum_{m=0}^\infty\binom{h_1-1+k}{m}Y(\zeta_j^{L_0}a_{m-h_1+1}^1a^j, \zeta_j)\zeta_j^{k}\ .
\ee 
This gives
\begin{eqnarray*}
&&\zeta_1^{-k}F(b_0^{\vec s}a_k^1; (a^2, \zeta_2), \cdots, (a^n, \zeta_n); \tau)
= \zeta_1^{-k}\Tr {b_0^{\vec s}}  a_k^{1}Y(\zeta_2^{L_0} a^2, \zeta_2) \cdots Y(\zeta_n^{L_0}a^n,\zeta_n)q^{L_0}\\
&&=\zeta_1^{-k}\sum_{j=2}^n\Tr b_0^{\vec s}Y(\zeta_2^{L_0} a^2, \zeta_2)\cdots [a_k^{1}, Y(\zeta_j^{L_0} a^j, \zeta_j)]\cdots Y(\zeta_n^{L_0} a^n, \zeta_n)q^{L_0}\\
&&+\zeta_1^{-k}q^k\Tr  [a_k^{1}, b_0^{\vec s}]Y(\zeta_2^{L_0} a^2, \zeta_2) \cdots Y(\zeta_n^{L_0}a^n,\zeta_n)q^{L_0}
+\zeta_1^{-k}q^k\Tr b_0^{\vec s}  a_k^{1}Y(\zeta_2^{L_0} a^2, \zeta_2) \cdots Y(\zeta_n^{L_0}a^n,\zeta_n)q^{L_0}
\\
&& = \sum_{j=2}^n\left(\frac{\zeta_j}{\zeta_1}\right)^k\sum_{m=0}^\infty\binom{h_1-1+k}{m}F(b_0^{\vec s}; (a_2, \zeta_2), \cdots, (a_{m-h_1+1}^{1}a^j, \zeta_j), \cdots, (a^n, \zeta_n); \tau)
\\
&&+ \zeta_1^{-k}q^k\Tr  [a_k^{1}, b_0^{\vec s}]Y(\zeta_2^{L_0} a^2, \zeta_2) \cdots Y(\zeta_n^{L_0}a^n,\zeta_n)q^{L_0}
+q^k \zeta_1^{-k}F(b_0^{\vec s}a_k^1; (a^2, \zeta_2), \cdots, (a^n, \zeta_n); \tau)
 \end{eqnarray*}
In the second line we pick up all the commutators, the $q^k$ coming from the fact that $a^1_k q^{L_0}= q^k q^{L_0}a^1_k$, and the last equality comes from applying the expression for the commutator (\ref{comma}).
We then use lemma~\ref{lem:comm} and solve for $\zeta_1^{-k}F(b_0^{\vec s}a^1_k; (a^2, \zeta_2), \cdots, (a_n, \zeta_n);\tau)$  to obtain (\ref{receq1}).
\end{proof}

We are now ready to prove
the main lemma used in the proof of the recursion relation in proposition \ref{prop:rec}.
The lemma is an analogue of equation (B.16) in \cite{Gaberdiel:2012yb}. 
\begin{lem}\label{genLem}
Given $k\in \mathbb{Z}\setminus \{0\}$,
\begin{multline}\label{B16_analogue}
\zeta_1^{-k}F(b_0^{(1,2,\cdots,r)}a_k^1; (a^2, \zeta_2), \cdots, (a^n, \zeta_n);\tau)=\\\sum_{j=2}^n\left(\frac{\zeta_j}{\zeta_1}\right)^k\sum_{\vec s\subset (1, 2,\cdots, r)}\sum_{\vec u}(2\pi i)^u\frac{1}{1-q^k}C_{\vec u}\sum_{m=0}^\infty\binom{h-1+k}{m}F(b_0^{\vec s};(a^2, \zeta_2), \cdots, (d^{\vec u}_{m-h+1}(a^1)a^j, \zeta_j),\cdots, (a^n, \zeta_n);\tau).
\end{multline}
Here, the $\vec s$ sum is over all subtuples $\vec s$ of $(1, 2, \cdots, r)$, including $\emptyset$. The $\vec u$ sum is over all permutations $\vec u$ of $(1, 2, \cdots, r)- \vec s,$ $u$ denotes the length of $\vec{u}$, and $C_{\vec u}$ is given by definition \ref{Ct}. Here, $h$ denotes the weight of $d^{\vec u}(a^1)$ and the expression is expanded if needed in the case that $d^{\vec u}(a^1)$ is of inhomogeneous weight.
\end{lem}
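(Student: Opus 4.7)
The plan is to iterate the recursion equation~\eqref{receq1} until its $\frac{q^k}{1-q^k}$ tail terminates, and then to collect contributions according to the permutation $\vec u$ assembled from successive peelings of subtuples.

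First I would proceed by induction on $r = |\vec s_0|$ where $\vec s_0 = (1,2,\ldots,r)$. The base case $r=0$ is immediate: the second sum in \eqref{receq1} is vacuous because $\vec s_0$ has no nonempty subtuples, and the first term recovers the claim with the unique admissible choice $\vec s = \vec u = \emptyset$, $C_\emptyset = 1$, $(2\pi i)^0 = 1$.

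For the inductive step I would apply \eqref{receq1} with $\vec s = \vec s_0$. Its first term is exactly the $\vec u = \emptyset$ contribution to the target identity. Its second term is a sum over nonempty subtuples $\vec t \subset \vec s_0$ of $\frac{q^k}{1-q^k}(2\pi i)^{t}\,\zeta_1^{-k} F(b_0^{\vec s_0-\vec t} d^{\vec t}_k(a^1);(a^2,\zeta_2),\ldots;\tau)$. By lemma~\ref{lem:dstate}, the symbol $d^{\vec t}_k(a^1)$ is a genuine mode of the state $d^{\vec t}(a^1)$; since $|\vec s_0 - \vec t| < r$, I can apply the induction hypothesis to each such correlator with the role of ``$a^1$'' played by $d^{\vec t}(a^1)$. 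Using the concatenation identity $d^{\vec u'}(d^{\vec t}(a^1)) = d^{\vec u' \cup \vec t}(a^1)$ stated right after lemma~\ref{lem:dstate}, the inductive output produces precisely correlators containing modes of the form $d^{\vec u' \cup \vec t}_{m-h+1}(a^1)$, matching the shape required by the target.

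The final step is combinatorial: I would group all contributions giving rise to a fixed subtuple $\vec s \subset \vec s_0$ and a fixed permutation $\vec u$ of $\vec s_0 - \vec s$. Each such contribution comes from a decomposition $\vec u = \vec u' \cup \vec t$ in which $\vec t$ is automatically an increasing subtuple of $(1,2,\ldots,r)$ (because it was peeled directly from $\vec s_0$ at the first step), with accumulated coefficient $\frac{q^k}{1-q^k} \cdot \frac{1}{1-q^k} C_{\vec u'}(2\pi i)^{u}$. Summing over all admissible rightmost increasing pieces $\vec t$ then gives $\frac{1}{1-q^k} \sum_{\vec t} \frac{q^k}{1-q^k} C_{\vec u'}$. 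By definition~\ref{Ct}, every partition of $\vec u$ into increasing subtuples has a unique rightmost piece, which corresponds to $\vec t$, and the remaining pieces form a partition of $\vec u'$; this bijection between partitions of $\vec u$ and pairs $(\vec t, p')$ yields $\sum_{\vec t} \frac{q^k}{1-q^k} C_{\vec u'} = C_{\vec u}$, producing exactly the factor $\frac{1}{1-q^k} C_{\vec u} (2\pi i)^{u}$ in the target.

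The main obstacle will be the bookkeeping of the weight $h$ appearing in the physics mode index $m - h + 1$: at each level of the recursion the relevant state changes, and one must verify that the weight appearing is always that of the fully composed state $d^{\vec u}(a^1)$, so that the inductive application of \eqref{receq1} meshes with the first-term contribution at the outermost level without reindexing errors.
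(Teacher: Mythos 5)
Your proposal is correct and follows essentially the same route as the paper's proof: induction on $r$ via equation \eqref{receq1}, application of the induction hypothesis to the tail terms with $a^1$ replaced by $d^{\vec t}(a^1)$ using lemma~\ref{lem:dstate} and the concatenation identity, and the same combinatorial regrouping in which the identity $C_{\vec u}=\frac{q^k}{1-q^k}\sum_{\vec t}C_{\vec u'}$ is established by splitting each partition of $\vec u$ into its last increasing block $\vec t$ and a partition of the prefix $\vec u'$. The weight bookkeeping you flag is handled exactly as the lemma's statement anticipates (taking $h$ to be the weight of the fully composed state $d^{\vec u}(a^1)$, with homogeneity assumed wlog by linearity), since the physics-mode index is preserved under commutation with zero modes.
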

\begin{proof}We prove the lemma by induction on $r$. The case $r=0$ follows immediately from equation \eqref{receq1}. Suppose the lemma holds of all $l\le r$ for some $r\ge 0$.

By equation \eqref{receq1}, 
\begin{multline}
\zeta_1^{-k}F(b_0^{(1,2,\cdots, r+1)}a_k^1; (a^2, \zeta_2),\cdots, (a^n, \zeta_n);\tau)\\=\frac{1}{1-q^k}\sum_{j=2}^n\left(\frac{\zeta_j}{\zeta_1}\right)^k\sum_{m=0}^\infty \binom{h_1-1+k}{m}F(b_0^{(1, 2, \cdots, r+1)};(a^2 \zeta_2), \cdots, (a^1_{m-h_1+1}a^j, \zeta_j),\cdots, (a^n, \zeta_n);\tau)\\+\zeta_1^{-k}\frac{q^k}{1-q^k}\sum_{\emptyset\ne {\vec t}\subset {(1,2,\cdots, r+1)}}(2\pi i)^tF(b_0^{(1, 2, \cdots, r+1)-{\vec t}}d_k^{\vec t}(a^1);(a^2, \zeta_2), \cdots, (a^n, \zeta_n):\tau).
\end{multline}We then apply the inductive hypothesis to the second term on the right hand side of the above equality. This gives us

\begin{multline}\label{C011}
\zeta_1^{-k}F(b_0^{(1,2,\cdots, r+1)}a_k^1; (a^2, \zeta_2),\cdots, (a^n, \zeta_n);\tau)\\=\frac{1}{1-q^k}\sum_{j=2}^n\left(\frac{\zeta_j}{\zeta_1}\right)^k\sum_{m=0}^\infty \binom{h_1-1+k}{m}F(b_0^{(1, 2, \cdots, r+1)};(a^2 \zeta_2), \cdots, (a^1_{m-h_1+1}a^j, \zeta_j),\cdots, (a^n, \zeta_n);\tau)\\+\frac{q^k}{1-q^k}\sum_{\emptyset\ne {\vec t}\subset {(1,2,\cdots, r+1)}}(2\pi i)^t\sum_{j=2}^n\left(\frac{\zeta_j}{\zeta_1}\right)^k\sum_{\vec s\subset (1, 2,\cdots, r+1)-{\vec t}}\sum_{\vec u}(2\pi i)^u\frac{1}{1-q^k}C_{\vec u}\sum_{m=0}^\infty\binom{h-1+k}{m}\\\times F(b_0^{{\vec s}};(a^2, \zeta_2), \cdots, (d^{\vec{ u}\cup \vec{t}}_{m-h+1}(a^1)a^j, \zeta_j),\cdots, (a^n, \zeta_n);\tau),
\end{multline}
where the fourth sum in the second term on the right hand side is over all permutations ${\vec u}$ of $(1,2,\cdots, r+1)-{\vec t}-{\vec s}.$ The first term on the right hand side corresponds to the ${\vec s}=(1,2,\cdots, r+1)$ term in equation \eqref{B16_analogue}. We thus need to show that the second term is equal to 
\begin{multline*}
\sum_{j=2}^n\left(\frac{\zeta_j}{\zeta_1}\right)^k\sum_{\vec{s}\subsetneq (1, 2, \cdots, r+1)}\sum_{\vec{v}}(2\pi i)^v\frac{1}{1-q^k}C_{\vec{v}}\sum_{m=0}^\infty\binom{h-1+k}{m}\\\times F(b_0^{\vec{s}};(a^2,\zeta_2),\cdots, (d^{\vec{v}}_{m-h+1}(a^1)a^j, \zeta_j),\cdots, (a^n,\zeta_n);\tau),
\end{multline*}
where the third sum here is over all permutations $\vec{v}$ of $(1,2,\cdots, r+1)-{\vec s}$. 

To do this, we gather all terms in (\ref{C011}) for which $\vec u \cup \vec t=\vec v$.
We then see that proving the equality is equivalent to proving that given any such permutation $\vec{v}$ of $(1,2,\cdots, r+1)-{\vec s},$ 
\begin{equation}
C_{\vec v}=\frac{q^k}{1-q^k}\sum_{\emptyset\ne \vec{t}\subset(1,2,\cdots,r+1)}\sum_{\vec{u} : \vec{u}\cup\vec{t}={\vec v}} C_{\vec{u}},
\end{equation} 
where the second sum is over all permutations $\vec u$ of $(1,2,\cdots, r+1)-\vec t-\vec s$ such that $\vec u\cup \vec t=\vec v$. We first note that because $\vec t\neq \emptyset$, $\vec s\ne (1,2,\cdots, r+1).$ This means that if $\vec v$ is a permutation of $(1,2, \cdots, r+1)-\vec s,$ $\vec v\ne \emptyset.$ Therefore, any partition $\vec {v_1}\cup \cdots \cup \vec {v_l}$ of $\vec v$ into nonempty subtuples $\vec {v_i}$ with strictly increasing entries has at least one suptuple. That is, any such partition $\vec {v_1}\cup \cdots \cup \vec {v_l}$ of $\vec v$ is such that $l\ge 1$. It follows that any such partition is equal to the union of a partition of a (possibly empty) subtuple $\vec u$ of $\vec v$ into subtuples with strictly increasing entries, and a nonempty subtuple $\vec t$ of $(1,2,\cdots, r+1)$, such that $\vec v=\vec u\cup \vec t.$ Recall that $C_{\vec v}$ is the sum of terms $\left(\frac{q^k}{1-q^k}\right)^l$ corresponding to each partition $\vec {v_1}\cup \cdots \cup \vec {v_l}$ of $\vec u$ into subtuples with strictly increasing entries. By the above argument, if $\vec u$ is such that $\vec v=\vec u\cup \vec t,$ where $\vec t$ is a subtuple of $(1,2,\cdots, r+1)$, $\frac{q^k}{1-q^k}$ times any term in $C_{\vec u}$ is a term in $\vec v$. On the other hand, any term in $C_{\vec v}$ is $\frac{q^k}{1-q^k}$ times a term in $C_{\vec u}$ for some (possibly empty) $\vec u$ such that $\vec v=\vec u\cup \vec t.$ The result follows.

\end{proof}

\def\cprime{$'$}


\begin{thebibliography}{FLM88b}

\bibitem[B\'04]{MR2078910}
Mikl\'os B\'ona.
\newblock {\em Combinatorics of permutations}.
\newblock Discrete Mathematics and its Applications (Boca Raton). Chapman \&
  Hall/CRC, Boca Raton, FL, 2004.
\newblock With a foreword by Richard Stanley.

\bibitem[BKT20]{MR4082238}
Kathrin Bringmann, Matthew Krauel, and Michael Tuite.
\newblock Zhu reduction for {J}acobi {$n$}-point functions and applications.
\newblock {\em Trans. Amer. Math. Soc.}, 373(5):3261--3293, 2020.

\bibitem[Bor86]{MR843307}
Richard~E. Borcherds.
\newblock Vertex algebras, {K}ac-{M}oody algebras, and the {M}onster.
\newblock {\em Proc. Nat. Acad. Sci. U.S.A.}, 83(10):3068--3071, 1986.

\bibitem[CIL16]{Castro:2016ehj}
Alejandra Castro, Nabil Iqbal, and Eva Llabr\'es.
\newblock {Eternal Higher Spin Black Holes: a Thermofield Interpretation}.
\newblock {\em JHEP}, 08:022, 2016.

\bibitem[DK24]{Downing:2024nfb}
Max Downing and Faisal Karimi.
\newblock {Modular Properties of Generalised Gibbs Ensembles}.
\newblock 10 2024.

\bibitem[DM01]{MR1877753}
Chongying Dong and Geoffrey Mason.
\newblock Transformation laws for theta functions.
\newblock In {\em Proceedings on {M}oonshine and related topics
  ({M}ontr\'{e}al, {QC}, 1999)}, volume~30 of {\em CRM Proc. Lecture Notes},
  pages 15--26. Amer. Math. Soc., Providence, RI, 2001.

\bibitem[DMN01]{MR1827085}
Chongying Dong, Geoffrey Mason, and Kiyokazu Nagatomo.
\newblock Quasi-modular forms and trace functions associated to free boson and
  lattice vertex operator algebras.
\newblock {\em Internat. Math. Res. Notices}, (8):409--427, 2001.

\bibitem[Dow23]{Downing:2023lop}
Max Downing.
\newblock {Modular transform of free fermion generalised Gibbs ensembles and
  generalised power partitions}.
\newblock 10 2023.

\bibitem[DW22]{Downing:2021mfw}
Max Downing and Gerard M.~T. Watts.
\newblock {Free fermions, KdV charges, generalised Gibbs ensembles and modular
  transforms}.
\newblock {\em JHEP}, 06:036, 2022.

\bibitem[DW24]{Downing:2023lnp}
Max Downing and G\'erard M.~T. Watts.
\newblock {Free fermions, KdV charges, generalised Gibbs ensembles, modular
  transforms and line defects}.
\newblock {\em JHEP}, 01:041, 2024.

\bibitem[EZ85]{MR781735}
Martin Eichler and Don Zagier.
\newblock {\em The theory of {J}acobi forms}, volume~55 of {\em Progress in
  Mathematics}.
\newblock Birkh\"auser Boston, Inc., Boston, MA, 1985.

\bibitem[FHL93]{MR1142494}
Igor~B. Frenkel, Yi-Zhi Huang, and James Lepowsky.
\newblock On axiomatic approaches to vertex operator algebras and modules.
\newblock {\em Mem. Amer. Math. Soc.}, 104(494):viii+64, 1993.

\bibitem[FLM88a]{FLM}
Igor Frenkel, James Lepowsky, and Arne Meurman.
\newblock {\em Vertex operator algebras and the {M}onster}, volume 134 of {\em
  Pure and Applied Mathematics}.
\newblock Academic Press, Inc., Boston, MA, 1988.

\bibitem[FLM88b]{MR996026}
Igor Frenkel, James Lepowsky, and Arne Meurman.
\newblock {\em Vertex operator algebras and the {M}onster}, volume 134 of {\em
  Pure and Applied Mathematics}.
\newblock Academic Press, Inc., Boston, MA, 1988.

\bibitem[GHJ12]{Gaberdiel:2012yb}
Matthias~R. Gaberdiel, Thomas Hartman, and Kewang Jin.
\newblock {Higher Spin Black Holes from CFT}.
\newblock {\em JHEP}, 04:103, 2012.

\bibitem[GJP13]{Gaberdiel:2013jca}
Matthias~R. Gaberdiel, Kewang Jin, and Eric Perlmutter.
\newblock {Probing higher spin black holes from CFT}.
\newblock {\em JHEP}, 10:045, 2013.

\bibitem[GK11]{Gutperle:2011kf}
Michael Gutperle and Per Kraus.
\newblock {Higher Spin Black Holes}.
\newblock {\em JHEP}, 05:022, 2011.

\bibitem[IW16]{Iles:2014gra}
Nicholas~J. Iles and G\'erard M.~T. Watts.
\newblock {Modular properties of characters of the W$_{3}$ algebra}.
\newblock {\em JHEP}, 01:089, 2016.

\bibitem[KM12]{MR2925472}
Matthew Krauel and Geoffrey Mason.
\newblock Vertex operator algebras and weak {J}acobi forms.
\newblock {\em Internat. J. Math.}, 23(6):1250024, 10, 2012.

\bibitem[Kra12]{Krauel:2012moc}
Matthew Krauel.
\newblock {\em {Vertex Operator Algebras and Jacobi Forms}}.
\newblock PhD thesis, UC, Santa Cruz (main), 2012.

\bibitem[Kra14]{MR3248159}
Matthew Krauel.
\newblock A {J}acobi theta series and its transformation laws.
\newblock {\em Int. J. Number Theory}, 10(6):1343--1354, 2014.

\bibitem[KZ95]{MR1363056}
Masanobu Kaneko and Don Zagier.
\newblock A generalized {J}acobi theta function and quasimodular forms.
\newblock In {\em The moduli space of curves ({T}exel {I}sland, 1994)}, volume
  129 of {\em Progr. Math.}, pages 165--172. Birkh\"{a}user Boston, Boston, MA,
  1995.

\bibitem[Lan87]{MR890960}
Serge Lang.
\newblock {\em Elliptic functions}, volume 112 of {\em Graduate Texts in
  Mathematics}.
\newblock Springer-Verlag, New York, second edition, 1987.
\newblock With an appendix by J. Tate.

\bibitem[Lib11]{MR2796409}
Anatoly Libgober.
\newblock Elliptic genera, real algebraic varieties and quasi-{J}acobi forms.
\newblock In {\em Topology of stratified spaces}, volume~58 of {\em Math. Sci.
  Res. Inst. Publ.}, pages 95--120. Cambridge Univ. Press, Cambridge, 2011.

\bibitem[LL04]{LL}
J.~Lepowsky and H.~Li.
\newblock {\em Introduction to Vertex Operator Algebras and Their
  Representations}.
\newblock Introduction to Vertex Operator Algebras and Their Representations.
  Birkh{\"a}user Boston, 2004.

\bibitem[Miy00]{MR1738180}
Masahiko Miyamoto.
\newblock A modular invariance on the theta functions defined on vertex
  operator algebras.
\newblock {\em Duke Math. J.}, 101(2):221--236, 2000.

\bibitem[Roy12]{MR3025137}
Emmanuel Royer.
\newblock Quasimodular forms: an introduction.
\newblock {\em Ann. Math. Blaise Pascal}, 19(2):297--306, 2012.

\bibitem[Sta12]{MR2868112}
Richard~P. Stanley.
\newblock {\em Enumerative combinatorics. {V}olume 1}, volume~49 of {\em
  Cambridge Studies in Advanced Mathematics}.
\newblock Cambridge University Press, Cambridge, second edition, 2012.

\bibitem[Zhu96]{Z}
Yongchang Zhu.
\newblock Modular invariance of characters of vertex operator algebras.
\newblock {\em J. Amer. Math. Soc.}, 9(1):237--302, 1996.

\end{thebibliography}

\end{document}